\theoremstyle{plain}
\newtheorem{theorem}{Theorem}
\newtheorem{lemma}{Lemma}
\theoremstyle{remark}
\newtheorem{example}{Example}
\newtheorem{remark}{Remark}
\newenvironment{customcondition}[1]
  {\innercustomthm}
  {\endinnercustomthm}
\newcommand{\cP}{\mathcal{P}}
\newcommand{\E}{\mathbb{E}}
\newcommand{\R}{\mathbb{R}}
\newcommand{\T}{\mathbb{T}}
\newcommand{\Z}{\mathbb{Z}}
\newcommand{\mH}{\mathcal{H}}
\newcommand{\mW}{\mathcal{W}}
\newcommand{\mX}{\mathcal{X}}
\newcommand{\mR}{\mathcal{R}}
\renewcommand{\L}{\mathscr{L}}
\newcommand{\eps}{\varepsilon}
\newenvironment{enumerate*}
\begin{document}

\begin{frontmatter}
\title{Bayesian Nonparametric Inference in McKean-Vlasov models}
\runtitle{Bayesian inference for McKean-Vlasov models}

\begin{aug}
\author[A]{\fnms{Richard}~\snm{Nickl}\ead[label=e1]{nickl@maths.cam.ac.uk}},
\author[B]{\fnms{Grigorios A.}~\snm{Pavliotis}\ead[label=e2]{g.pavliotis@imperial.ac.uk}}
\and
\author[B]{\fnms{Kolyan}~\snm{Ray}\ead[label=e3]{kolyan.ray@imperial.ac.uk}}
\address[A]{Department of Pure Mathematics and Mathematical Statistics, University of Cambridge\printead[presep={,\ }]{e1}}

\address[B]{Department of Mathematics, Imperial College London\printead[presep={,\ }]{e2,e3}}
\end{aug}

\begin{abstract}
We consider nonparametric statistical inference on a periodic  interaction potential $W$ from noisy discrete space-time measurements of solutions $\rho=\rho_W$ of the nonlinear McKean-Vlasov equation, describing the probability density of the mean field limit of an interacting particle system. We show how Gaussian process priors assigned to $W$ give rise to posterior mean estimators that exhibit fast convergence rates for the implied estimated densities $\bar \rho$ towards $\rho_W$. We further show that if the initial condition $\phi$ is not too smooth and satisfies a standard deconvolvability condition, then one can consistently infer Sobolev-regular potentials $W$ at convergence rates $N^{-\theta}$ for appropriate $\theta>0$, where $N$ is the number of measurements. The exponent $\theta$ can be taken to approach $1/2$ as the regularity of $W$ increases corresponding to `near-parametric' models.
\end{abstract}

\begin{keyword}[class=MSC]
\kwd[Primary ]{62G20}
\kwd{62F15}
\kwd[; secondary ]{35Q70}
\kwd{35Q84}
\end{keyword}

\begin{keyword}
\kwd{McKean-Vlasov PDE}
\kwd{Bayesian inverse problems}
\kwd{Gaussian processes}
\kwd{interacting particle systems}
\end{keyword}

\end{frontmatter}


\section{Introduction}

We investigate the problem of conducting statistical inference in a class of dynamical systems whose state at time $t \in [0,T]$ is described by the probability density $\rho=\rho_W(t, \cdot)$ solving the nonlinear partial differential equation (PDE)
\begin{align}\label{PDE2}
\frac{\partial \rho}{\partial t}  &= \Delta \rho + \nabla \cdot (\rho \nabla (W \ast \rho)) \\
\rho(0, \cdot)&= \phi \notag,
\end{align}
known as the \textit{McKean-Vlasov equation}, where $\Delta, \nabla \cdot, \nabla$ denote the Laplacian, divergence and gradient operators, respectively. Furthermore, $\phi$ is the probability distribution of the initial state and $W$ is an interaction potential with Lipschitz gradient. In the present paper, we consider this equation on the $d$-dimensional torus $\T^d$, with periodic boundary conditions.
This PDE is a non-linear and non-local Fokker-Planck type equation and plays a fundamental role in a variety of scientific application areas  ranging from opinion dynamics \cite{hegselmannkrause} and other models in the social sciences~\cite{toscani2014} to mathematical biology 
\cite{painter_al_2024}, sampling and optimization~\cite{sprungk2023metropolisadjusted, Totzeck_al_2017}, plasma physics~\cite{Fournier_Jourdain_2017}, fluid mechanics~\cite{guillin2023uniform}, as well as statistical physics \cite{Ruffo_al_2018, frank04} and kinetic theory \cite{sznitman1991, Golse2016, MM13}. 

The problem we shall study here is how to recover the interaction potential $W$ from a discrete statistical measurement of the evolution of the macrosopic state $\rho$ of the system over time. We shall take a non-parametric approach where a possibly high- or infinite-dimensional model is postulated for the function $W$, arising from a Gaussian process prior $\Pi$, which is then updated via Bayes rule from `regression type' data $(Y_i, t_i, X_i)_{i=1}^N$ of the form
\begin{equation} \label{model}
Y_i = \rho_W (t_i, X_i) + \varepsilon_i,~~\varepsilon_i \sim^{iid} N(0,1),~~ i=1, \dots, N,
\end{equation}
where the $t_i, X_i$ are sampled discretely from the time-space cylinder $\mathcal X = [0,T] \times \T^d$. This approach is frequently taken in applications, e.g., \cite{Goodwin2020, computation9110119,lee2023learning}, and fits into the general paradigm of Bayesian non-linear inverse problems with PDEs \cite{Stuart2010},  which is amenable to an algorithmic treatment via MCMC and numerical PDE methods even in high- and infinite-dimensional models \cite{CRSW13, BGLFS17} -- see after (\ref{post}) below for more details. It also allows us to take advantage of recent theoretical developments in the field, e.g., \cite{N20, MNP21} and especially \cite{Nickl_2023}, but before we do so let us shed some more light on how such measurements may arise in concrete physical situations, explaining also the interpretation of the function $W$ as driving interactions.

A basic interacting particle model postulates the simultaneous evolution in time $t \in [0,T]$ of $n$ particles $X_t^i$ in a $d$-dimensional state space $\T^d$, solving the coupled system of $n$ stochastic differential equations (SDEs)
\begin{equation}\label{npart}
dX_t^i = -\frac{1}{n} \sum_{i \neq j}\nabla W(X_t^i-X_t^j)dt + \sqrt 2 dB_t^i,~~~~i=1, \dots, n,
\end{equation}
where the $B^i_t$ are independent $d$-dimensional Brownian motions, and where \textit{chaotic} initial conditions are assumed, i.e., the $X_0^i$ are all started independently, drawn randomly from the given initial distribution $\phi$. This setup can be considered as well in $\R^d$, in particular if we add a drift $-\nabla U(X^i_t)dt$ arising from a (known) confining potential $U$ to the  SDE (\ref{npart}). There has been a sequence of interesting recent papers trying to address the problem of inferring $W$ from measurements in models such as (\ref{npart}), see \cite{Amorino_al_2022, sharrock2023onlin, della2023lan, PavliotisZanoni2022, pavliotis2022method, Genon_laredo_2021, giesecke2019, amorino2024polynomial, Comte_al_2023, Hoffmann_al_2022} -- with earlier references being \cite{Bishwal_2011, Kasonga_1990}. Key statistical aspects of this task, such as dealing with the multi-dimensional setting $d>1$ and clarifying when the potential $W$ is indeed statistically identifiable, still remain broadly open, however. The important reference \cite{Hoffmann_al_2022} shows how the particle densities can be estimated empirically from the data, but as our results indicate, one can obtain faster convergence rates than in \cite{Hoffmann_al_2022} in important physically relevant settings (see below for more discussion). Furthermore, the hypotheses imposed in all the preceding references to identify $W$ are typically implicit and have not been verified for natural parameter spaces such as general Sobolev spaces. Note also that recently developed proof techniques for non-parametric inference in multi-dimensional diffusion models (see \cite{Nickl_Ray_2020, GiordanoRay2022, N24, HR22} and references therein) do not apply straightforwardly to the model (\ref{npart}) when the number $n$ of particles is large as the dimension $nd$ of the underlying state space then diverges rapidly.


To exploit the information present in measurements of such systems in the mean field limit ($n \to \infty$), one ought to take advantage of the rich mathematical theory that describes the macroscopic behaviour of particles interacting according to (\ref{npart}). In particular, it is well known that under mild assumptions on the interaction potential, and with chaotic initial data, the \textit{propagation of chaos} property holds-- we refer to \cite{sznitman1991} for a classical reference, to \cite{oelschlager1984, Malrieu2001} for particularly clear proofs, and \cite{guillin2023uniform, Lacker_LeFlem_2023, DGPS23} for important recent developments. Specifically, it is shown in \cite{Lacker_LeFlem_2023} that if one considers the \textit{marginal distributions} $P_t^{k,n} $ of any fixed number of $k=o(n)$ particles, then in the mean field limit $n \rightarrow \infty$ the laws $P_t^{k,n}$ are approximated in relative entropy by a \textit{product measure} $P^k_t$ whose coordinate distributions have  probability density $\rho_W$ solving the PDE (\ref{PDE2}). Moreover, $\rho=\rho_W$ is the density of the law $Law (X_t)$, where $X_t$ is the solution of the McKean SDE whose drift itself depends on the law of the process~\cite{rehmeier2024nonlinearmarkovprocessessense},
\begin{equation}\label{nlinsde}
dX_t = -(\nabla W \ast \rho )(t, X_t)dt + \sqrt 2 dB_t, ~t \ge 0,~~ X_0 \sim \phi.
\end{equation}
The existence of the solution to the last equation is not difficult to prove, in particular when the state space is the torus, by a fixed point argument in path space, see, e.g., \cite[Ch. I]{sznitman1991}. The intuition is that as $n$ increases, the distribution of any fixed particle is determined by the local dynamics of a Brownian motion with drift term reacting to the local density $\rho$ of nearby particles by convolution with the gradient of the interaction potential. 
Moreover, we can regard the laws of each distinct (fixed) pair chosen among $k=o(n)$ particles as \textit{approximately independent}. One can then record the `relative frequencies'  of the proportion of these $k$ particles spent in each of $N$ `bins' of a dissection of the time-space cylinder $[0,T] \times \T^d$. This allows one to obtain approximate (`histogram' type) measurements of the particle density $\rho_W$, and by the approximate independence of the $k$ marginals one can justify the independence assumption in model (\ref{model}). If we make the further simplifying assumption that the variance is homo-skedastic across the bins, the resulting model (\ref{model}) falls into the theoretical framework of \cite{Nickl_2023}. The assumption of constant noise variance could in principle be avoided by either directly studying a density estimation model where one has samples drawn i.i.d.~from $\rho_W$, or by minor adaptations of the developments in \cite{Nickl_2023}, but in order to focus on the main ideas and to expedite proofs, we refrain from this. We further believe that the results obtained here also strongly inform the `dependent' models (\ref{npart}), (\ref{nlinsde}), possibly even by using the non-asymptotic approximations in \cite{Lacker_LeFlem_2023} combined with results in \cite{R08} to bound the (one-sided) Le Cam distance for equivalence of statistical experiments. These questions are of independent interest and will be investigated elsewhere -- in the present article we content ourselves with aggregating the information in the data to a measurement (\ref{model}) of the particle densities. 


Once we have adopted the paradigm of Bayesian nonlinear regression with data (\ref{model}), the question of whether consistent statistical inference on $W, \rho_W$ is feasible can be determined by studying analytical properties of the parameter to solution map $W \mapsto \rho_W$ of the underlying PDE -- this was first shown in \cite{MNP21} and a general theory is laid out in \cite{Nickl_2023}. For the present non-linear Fokker-Planck equation (\ref{PDE2}) we exploit ideas from \cite{CGPS2020, Chazelle_al2017b}  that allow to realise $\rho_W$ as a limiting fixed point of sequences of suitable linear parabolic problems, as well as a `Girsanov type' estimate in relative entropy for solutions to linear and nonlinear Fokker-Planck equations
\cite{BRS_2016,Lacker_LeFlem_2023}; this bound permits us to replace the  arguments for linear PDEs from Sec.~2.1.1 in \cite{Nickl_2023}. Our key analytical results consist, first (Lemma \ref{lem:forward_lip}), of proving Lipschitz continuity of the non-linear forward map $W \mapsto \rho_W$ on bounded subsets of $C^2(\T^d)$:
$$\|\rho_{W_1} - \rho_{W_2}\|_{L^2(\mathcal X)} \lesssim \|\nabla (W_1-W_2) \ast \rho_{W_2}\|_{L^2(\mathcal X)} \lesssim \|W_1-W_2\|_{H^{-\beta}}$$ where $\beta>0$ is 
 a measure of the regularity of the initial condition $\phi \in H^\beta$. Using the theory from \cite{Nickl_2023} this implies that Gaussian process priors for interaction potentials $W$ give rise to sensible non-parametric models for the relevant regression functions $\rho_W$. In particular, this enables us to obtain fast convergence rates for recovery of $\rho_{W}$ by Bayesian plug-in estimates $\rho_{\bar W_N}$ arising from the posterior mean $\bar W_N = E^{\Pi}[W|(Y_i, t_i, X_i)_{i=1}^N]$. We then tackle the inverse problem and prove a partial converse to the first inequality in the last display, specifically a stability estimate
$$\|(W_1-W_2)\ast \rho_{W_2}\|_{L^2(\mathcal X)} \lesssim \|\rho_{W_1}- \rho_{W_2}\|_+ $$
for a norm $\|\cdot\|_+$ that involves partial derivatives in the space \textit{and} time variables, see (\ref{nlinbdfin}) for details. When $W_1$ has a bound on the number of non-zero Fourier modes, this further implies a novel injectivity (or `inverse stabilty') estimate for the `interaction potential to solution' map $W \mapsto \rho_W$ as soon as deconvolving the `factor' $\rho_{W_2}$ is possible. This permits us to provide concrete verifiable hypotheses on only the initial condition $\phi$ under which full inference on any Sobolev smooth interaction potential becomes feasible at polynomial convergence rates $N^{-\theta}, \theta>0$. The Bayesian posterior mean estimators that attain these convergence rates can be implemented by MCMC techniques, and unlike frequentist methods studied in earlier references, our approach does not require or postulate an explicit identification formula for $W$. Rather, the aforementioned stability estimates are sufficient to guarantee recovery, which is one of the main reason why Bayes methods are attractive in nonlinear inverse problems arising with PDEs and data assimiliation tasks, see \cite{Nickl_2023} and references therein.


Our proofs exploit information in the observations available at finite (in fact, short) time horizons, and are not based on any kind of ergodicity properties of the particle system.  
 Several of the recent papers on inference for the McKean SDE cited earlier develop inference methodologies based on observing particles over a long time interval, and rely on the ergodic properties of the dynamics in the limit as $T \rightarrow \infty$. This approach is not necessarily well-suited for the mean field dynamics. Indeed, it is well known~\cite{Dawson1983, CGPS2020, DGPS23,MGRGGP2020} that the McKean-Vlasov dynamics can exhibit phase transitions, i.e. multiple stationary states can exist. 
In particular, for the dynamics on the torus in the absence of a confining potential, the uniform measure is always a stationary state for the mean field dynamics, and clearly no information about the interaction potential can be deduced from it. For example, the spectral theoretic approach that was proposed in~\cite{PavliotisZanoni2022} and that is based on linearising the McKean-Vlasov operator around the invariant measure, does not work in this case, since the linearised Mckean-Vlasov operator is simply the Laplacian. But even when the McKean SDE has a unique \textit{informative} invariant measure, long term horizon asymptotics may not be useful as the invariant measures are generally smooth and hence not easily deconvolvable.

\section{Main results}\label{main}

\subsection{Notation}

We will denote by $C(\T^d)$ the space of (bounded) continuous functions on $\T^d$ while $L^2(\T^d)$ denotes the usual Hilbert space of square integrable functions for Lebesgue (probability) measure $dx$ on $\T^d$. The  spaces $H^\alpha(\T^d), C^\alpha(\T^d)$ consist of all functions that have partial (in the former case, weak) derivatives up to order $\alpha \in \mathbb N$ defining elements of $L^2(\T^d), C(\T^d)$, respectively, and we set $C^\infty(\T^d) = \cap_{\alpha>0} C^\alpha(\T^d)$, $C^0(\T^d)=C(\T^d)$ by convention. For regularity estimates for the PDE (\ref{PDE2}), it is also convenient to introduce the space $W^{\alpha, \infty}(\T^d)$ of functions in $C(\T^d)$ whose weak partial derivatives up to order $\alpha$ lie in the space $L^\infty(\T^d)$ of bounded functions. We have $C^\alpha(\T^d) \hookrightarrow W^{\alpha, \infty}(\T^d)$ where $\hookrightarrow$ means a norm-continuous imbedding. We can further define Sobolev spaces $H^\alpha(\T^d)=(H^{-\alpha}(\T^d))^*$ also for $\alpha<0$ as the topological dual space. The convolution $\ast$ of functions and measures is defined as usual, for instance if $f,g \in C(\T^d)$ then $f \ast g(x) = \int_{\T^d} f(x-y)g(y)dy, x \in \T^d$. 

We further define function spaces on the time-space cylinder $\mathcal{X} =[0,T] \times \T^d$, such as the Hilbert space $L^2(\mathcal X, \lambda)$ where $\lambda=\lambda_T$ is the uniform probability measure on $\mX$. For $B$ a normed space we also use standard parabolic PDE notation~\cite[Sec. 5.9]{evans} for the function spaces $L^p([0,T], B)$ of maps $H:\mathcal X \to \R$ whose norms $\|H(t,\cdot)\|_B$ lie in $L^p([0,T])$. The spaces $H^m([0,T], B)$ then denote those functions $H \in L^2([0,T], B)$ whose (weak) time derivatives $(\partial^j/\partial t) H$  for all $0 \le j \le m$ lie in $L^2([0,T], B)$, with corresponding Hilbert space norm. Similarly, we define the spaces $C^m([0,T] , B)$ of functions with time derivatives in $L^\infty([0,T], B)$. We also write $\mathcal{P}=\mathcal P(\T^d)$ for the set of all (Borel-) probability measures on $\T^d$. We will freely use standard results from real and Fourier analysis \cite{folland_1}.

\subsection{Gaussian process regression in the McKean-Vlasov model}

For an initial condition $\phi \in H^\beta$, consider the time marginal densities $\rho=\rho_W$ providing the unique periodic solutions to the non-linear parabolic PDE (\ref{PDE2}) on the time-space cylinder $\mathcal{X}  = [0,T] \times \T^d$, see Theorem \ref{thm:MV_well_posed}. Consider $N$ independent and identically distributed (iid) observations $(t_i,X_i,Y_i)_{i=1}^N$ arising from the model (\ref{model}), where the $(t_i, X_i)$ are drawn iid from the uniform distribution $\lambda$ on $[0,T] \times \T^d$, independently of the noise $\varepsilon_i$. We denote by $D_N = ((t_i,X_i,Y_i))_{i=1}^N$  the observation vector, with law $P_W^N$. As explained in the introduction, such regression-type forward models are widely used in (Bayesian) inverse problems \cite{Nickl_2023} and are approximately justified in interacting particle models (\ref{npart}) by the propagation of chaos phenomenon.

We devise a Bayesian model for the solutions $\rho_W$ of the PDE (\ref{PDE2}) in the non-linear regression framework (\ref{model}), placing a prior on the interaction potential $W$, our quantity of interest, while treating the initial condition $\phi$ as given. Another common approach in data assimilation with such time evolution equations would be to assign a prior to the initial condition $\phi$ -- see for instance the recent article \cite{NT23} in  the $2D$ Navier-Stokes model.

We employ Gaussian process priors \cite{RW06} for the interaction potential -- these will be supported on a separable normed linear space $(\mathcal W, \|\cdot\|_\mathcal W)$ satisfying the imbedding
\begin{equation}\label{suppw}
\mathcal W \hookrightarrow C^2(\T^d) \cap \left\{W: \int_{\T^d}W(x)dx =0\right\}.
\end{equation}
Note that the potential $W$ is identifiable only up to constants as the gradient $\nabla W$ relevant in models \eqref{PDE2},\eqref{npart},\eqref{nlinsde} annihilates constants, so fixing $\int W=0$ is natural for identifiability reasons. The separability is required to apply Gaussian process techniques from \cite{GN16, Nickl_2023} -- we can think of $\mathcal W$ as equal to a Sobolev space $H^{\alpha+1}(\T^d)$ for $\alpha>1+d/2$ or, to obtain sharper results, we can also take periodic Besov spaces $\mathcal W = B^\alpha_{\infty 1}(\T^d), \alpha \ge 2$ \cite[p. 370]{GN16}. Concrete examples will be discussed below, but for now we let $\Pi$ be any Borel probability measure on some space $\mathcal W$ satisfying (\ref{suppw}). We note that similar `$p$-exponential' priors could be used as well following ideas in \cite{ADH21, GiordanoRay2022, AW24}, but we do not pursue this here.

The posterior distribution obtained from such a prior $\Pi$ and data (\ref{model}) is given by Bayes' formula via standard arguments \cite{GvdV17, Nickl_2023};
\begin{equation}\label{post}
d\Pi(W|D_N) \propto e^{\ell_N(W)} d\Pi(W);~~\ell_N(W) = -\frac{1}{2} \sum_{i=1}^N |Y_i - \rho_W(t_i, X_i)|^2,~~W \in \mathcal W.
\end{equation}
Posterior draws $w \sim \Pi(\cdot|D_N)$ can be approximately calculated from Markov chain Monte Carlo (MCMC) techniques, for instance by the pCN, ULA or MALA algorithms, following the now well established paradigm of Bayesian inversion and uncertainty quantification in PDE models, see \cite{Stuart2010, CRSW13, BGLFS17, PSV_2022} and also \cite[Sec. 1.2.4]{Nickl_2023} for an overview. Each MCMC step requires one numerical solution of the non-linear Fokker-Planck equation (\ref{PDE2}), for which a variety of methods exists, see \cite{CCH15, SGGPUV2019, SS22} for recent references. We can then form ergodic averages $(1/K) \sum_{k=1}^K w_k$ of these MCMC draws $w_k$ to approximately compute the posterior mean estimate $\bar W = E^{\Pi}[W|D_N]$ of the interaction potential $W$, which in turn delivers estimates also of the particle densities $\rho_{\bar W}$ by one further numerical solution of the Fokker-Planck equation (\ref{PDE2}) with interaction potential $\bar W$. One can further use MCMC draws to perform Bayesian uncertainty quantification via posterior credible sets. For a concrete example and further discussion on applying MCMC methods to the Keller-Segel model for chemotaxis (which is a non-linear, non-local PDE of the form~\eqref{PDE2}, \cite[Sec. 6.5]{CGPS2020}), see for instance \cite{lee2023learning}.

One can also attempt to minimise the negative log-posterior over $\mathcal W$ (amounting to a MAP estimate, or Tikhonov regulariser), but we note that the problem is non-convex due to the nonlinearity of $W \mapsto \rho_W$, hence MCMC may be more robust to the presence of local optima in the criterion function. Computational guarantees for gradient based MCMC methods can be obtained in principle following ideas from \cite{NW24} combined with some of the stability estimates from the proofs that follow, but this is beyond the scope of the present paper. We shall focus here on analysing the statistical properties of the posterior distribution $\Pi(\cdot|D_N)$ under the objective (`frequentist') assumption that an actual ground truth potential $W_0$ generated the interacting particle system and thus data from equation (\ref{model}). Therefore the statistical analysis is under the law $P_{W_0}^N$ of the data vector $D_N$, and $\to^{P_{W_0}^N}$ will denote convergence in $P_{W_0}^N$-probability.

\subsection{Posterior contraction rates for the particle densities}

We establish contraction rates for posterior distributions (\ref{post}) arising from rescaled and possibly projected Gaussian process priors over $\T^d$. We start with a mean-zero Gaussian process $(V(x): x \in \T^d)$ with reproducing kernel Hilbert space (RKHS) $\mH$, whose law $\Pi_V$ satisfies the following assumption. For definitions and background material on Gaussian processes and their associated RKHS, see \cite[Ch. 11]{GvdV17} or \cite{RW06}. 

\begin{customcondition}{$\alpha$}\label{cond:RKHS}
Let $\Pi_V$ be a centred Gaussian probability measure on $\mathcal W$ from (\ref{suppw}) with RKHS $(\mH,\|\cdot\|_{\mH})$. Further suppose that for some $\alpha>d/2+1$, the continuous embedding $\mH \hookrightarrow H^{\alpha+1}(\T^d)$ holds. 
\end{customcondition}
To deal with various non-linearities in our regression problem, we follow ideas in \cite{MNP21} and rescale the `base prior' via
\begin{equation}\label{e:rescaled_GP}
W = \frac{V}{\sqrt{N}\delta_N};\qquad \qquad  \delta_N = N^{-\frac{\alpha+1+\beta}{2(\alpha+1)+2\beta+d}},
\end{equation}
to introduce extra regularisation in the posterior distribution. Here, $\alpha+1$ models the regularity of the interaction potential $W$ (so $\alpha$ models the regularity of $\nabla W$), while $\beta >0$ is determined by the smoothness of the initial condition $\rho_{W}(0, \cdot) = \phi \in H^\beta$. Among other things, this ensures space-time regularity estimates for $\rho_W$ corresponding to interaction potentials $W$ drawn from the posterior, see Theorem \ref{thm:PDE_lin} below. Note that such priors are special cases of the rescaled Gaussian process priors considered in several `direct' statistical settings \cite{GvdV17}.

We will further consider $L^2(\T^d)$-projections $\pi_{\mathcal W_N}(W)$ of the law of $W$ onto sequences of linear subspaces $\mathcal W_N \subseteq \mathcal W$, including the case $\mathcal W_N = \mathcal W$ for all $N$. The final prior law of such $W$ on $\mathcal W$ or $\mathcal W_N$ will be denoted by $\Pi=\Pi_N$. A typical example of such a projection is the truncation of an infinite dimensional Gaussian process onto its finite-dimensional counterpart, as is usually done in practice. We note that less aggressive shrinkage would be permitted in (\ref{e:rescaled_GP}) but we opt for the present choice as this allows us to directly use results from \cite{Nickl_2023} in the proofs.


The following theorem shows that we can solve the regression problem underlying (\ref{model}) via the implied prior arising from a Gaussian process model for the interaction potential $W$. We restrict to \textit{even} integers $\beta$ to facilitate the application of parabolic PDE theory in the proofs.

\begin{theorem}\label{thm:GP_forward}
Suppose $\phi \in H^\beta(\T^d)$ for some even integer $\beta \geq 3+d$ and let $\Pi_V$ be a Gaussian measure satisfying Condition \ref{cond:RKHS}. Consider the rescaled Gaussian process prior from \eqref{e:rescaled_GP} projected onto a linear subspace $\mathcal W_N$ of $\mathcal W$ such that $\|\pi_{\mathcal{W}_N}(w)\|_\mathcal{W} \leq C \|w\|_\mathcal{W}$ for all $w\in \mathcal{W}$ and some $C>0$ independent of $N$. Let $W_0 \in \mathcal H$ satisfy $\int_{\T^d} W_0 =0$, and assume further that there exists a sequence $W_{0,N}\in \mH \cap \mathcal W_N$ such that $\|W_{0,N}\|_{\mH} = O(1)$ and $\|W_0 - W_{0,N}\|_{H^{-\beta}} = O(\delta_N)$ as $N\to\infty$. Then for $M,L>0$ large enough, we have
$$\Pi(W \in \overline{\mathcal W}_{M,N} : \|\rho_W-\rho_{W_0}\|_{L^2(\mX)} \le L \delta_N|D_N) \to^{P^N_{W_0}} 1$$
as $N\to \infty$, where
\begin{equation}\label{WMN}
\overline{ \mW}_{M,N} = \left\{ W=W_1+W_2 \in \mathcal W_N: \|W_1\|_{H^{-\beta}} \leq M\delta_N, ~ \|W_2\|_{\mH} \leq M, ~ \|W\|_{C^2} \leq M \right\}.
\end{equation}
\end{theorem}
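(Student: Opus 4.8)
The plan is to obtain this contraction statement from the general theory for posterior asymptotics in Gaussian nonparametric regression (as in \cite{Nickl_2023}), with Lemma~\ref{lem:forward_lip} supplying the crucial analytic input. Since the observation model (\ref{model}) is Gaussian regression with regression function $\rho_W$ sampled at uniform design points, the Kullback--Leibler divergence and the log-likelihood-ratio variance between the product laws $P_W^N$ and $P_{W_0}^N$ both equal, up to a universal constant, $N\|\rho_W-\rho_{W_0}\|_{L^2(\mX,\lambda)}^2$. It therefore suffices to verify, for the rescaled and $L^2$-projected prior $\Pi=\Pi_N$, the three standard hypotheses: (a) a prior mass (small-ball) bound $\Pi(\|\rho_W-\rho_{W_0}\|_{L^2(\mX)}\le\delta_N)\ge e^{-cN\delta_N^2}$; (b) a sieve bound $\Pi(\overline{\mW}_{M,N}^{\,c})\le e^{-(c+4)N\delta_N^2}$ for $M$ large, with $\overline{\mW}_{M,N}$ the set (\ref{WMN}); and (c) a metric entropy bound of order $N\delta_N^2$ at resolution $\delta_N$ for the set $\{\rho_W:W\in\overline{\mW}_{M,N}\}$ in $L^2(\mX,\lambda)$. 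Given (a)--(c), the usual construction of likelihood-ratio tests on the sieve, together with the evidence lower bound implied by (a), yields simultaneously $\Pi(\|\rho_W-\rho_{W_0}\|_{L^2(\mX)}\le L\delta_N\mid D_N)\to 1$ and $\Pi(\overline{\mW}_{M,N}\mid D_N)\to 1$ in $P_{W_0}^N$-probability, and intersecting the two events gives the claim. (Note $W_0\in\mH\hookrightarrow H^{\alpha+1}(\T^d)\hookrightarrow C^2(\T^d)$ by Condition~\ref{cond:RKHS}, so $\rho_{W_0}$ is well defined by Theorem~\ref{thm:MV_well_posed}.)

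For (a), since $\sqrt N\delta_N\to\infty$ the rescaling (\ref{e:rescaled_GP}) forces $\|W\|_{C^2}=\|V\|_{C^2}/(\sqrt N\delta_N)\to 0$ in $\Pi$-probability (the base process having a.s.\ finite $C^2$-norm by Condition~\ref{cond:RKHS}), so with probability tending to one we lie on $\{\|W\|_{C^2}\le M\}$, where Lemma~\ref{lem:forward_lip} applies and reduces (a) to a lower bound for $\Pi(\|W-W_0\|_{H^{-\beta}}\le c'\delta_N)$. This is a concentration function estimate for the centred Gaussian $\Pi_V$ in the weak norm $\|\cdot\|_{H^{-\beta}}$: decentring by $h_N:=\sqrt N\delta_N W_{0,N}\in\mH$, which has $\|h_N\|_\mH=\sqrt N\delta_N\|W_{0,N}\|_\mH=O(\sqrt N\delta_N)$, the Cameron--Martin shift inequality and the approximation hypothesis $\|W_0-W_{0,N}\|_{H^{-\beta}}=O(\delta_N)$ bound the probability below by $e^{-\|h_N\|_\mH^2/2}\,\P(\|V\|_{H^{-\beta}}\le c''\sqrt N\delta_N^2)\ge e^{-O(N\delta_N^2)}\,\P(\|V\|_{H^{-\beta}}\le c''\sqrt N\delta_N^2)$. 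For the last factor, Condition~\ref{cond:RKHS} gives that the unit ball of $\mH$ has $\|\cdot\|_{H^{-\beta}}$-metric entropy of order $\eta^{-d/(\alpha+1+\beta)}$, whence by the Li--Linde bound \cite{GN16} $-\log\P(\|V\|_{H^{-\beta}}\le\eps)\lesssim\eps^{-2d/(2(\alpha+1)+2\beta+d)}$; substituting $\eps\asymp\sqrt N\delta_N^2$ and the definition of $\delta_N$ in (\ref{e:rescaled_GP}) makes this $o(N\delta_N^2)$ --- the slack reflecting the remark after (\ref{e:rescaled_GP}) that milder shrinkage would be admissible.

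For (b), $W=V/(\sqrt N\delta_N)$ is a rescaled Gaussian whose RKHS unit ball is $(\sqrt N\delta_N)^{-1}$ times that of $\mH$, so the standard Gaussian decomposition inequality \cite{GN16} --- together with the concentration function value at scale $\delta_N$ being $o(N\delta_N^2)$ from step (a) --- produces, for $M$ large enough, a splitting $W=W_1+W_2$ with $\|W_1\|_{H^{-\beta}}\le M\delta_N$ and $\|W_2\|_\mH\le M$ off an event of probability $\le e^{-(c+4)N\delta_N^2}$, while the quantitative Borell inequality applied to $\|V\|_{C^2}/(\sqrt N\delta_N)$ gives $\|W\|_{C^2}\le M$ off a comparably small event. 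As $\pi_{\mW_N}$ is an $L^2$-orthogonal projection, hence non-expansive on the Sobolev scales involved (and bounded on $\mW$ by hypothesis), all these bounds transfer to the projected draw up to fixed constants, giving (b) after enlarging $M$. For (c), the $\|\cdot\|_{H^{-\beta}}$-entropy of $\overline{\mW}_{M,N}$ is controlled by that of the $\mH$-ball of radius $M$ (the $H^{-\beta}$-small component being negligible at resolution $\delta_N$), hence of order $(M/\eta)^{d/(\alpha+1+\beta)}$ by Condition~\ref{cond:RKHS}; the Lipschitz bound of Lemma~\ref{lem:forward_lip}, valid throughout the $C^2$-bounded set $\overline{\mW}_{M,N}$, then transfers this to an entropy bound of the same order for $\{\rho_W:W\in\overline{\mW}_{M,N}\}$ in $L^2(\mX,\lambda)$, which at resolution $\delta_N$ is $O(N\delta_N^2)$ by the choice of $\delta_N$.

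I do not expect a serious obstacle inside this proof: it is an instance of the machinery of \cite{Nickl_2023}, and the one genuinely delicate ingredient --- that $W\mapsto\rho_W$ is Lipschitz \emph{into $L^2(\mX)$ from the weak norm $H^{-\beta}$}, which is simultaneously what makes the rate $\delta_N$ fast and the sieve entropy small --- is exactly Lemma~\ref{lem:forward_lip}, which we may invoke. The only point requiring care is the \emph{joint} handling of the three norms in $\overline{\mW}_{M,N}$: the $C^2$-bound is needed to legitimise every use of Lemma~\ref{lem:forward_lip}, the $\mH$-bound to control entropy and the small-ball cost, and the $H^{-\beta}$-bound to localise at scale $\delta_N$ about $W_0$; these are compatible precisely because the rescaling (\ref{e:rescaled_GP}) drives $\|W\|_{C^2}$ to $0$ while still leaving prior mass $e^{-O(N\delta_N^2)}$ in an $H^{-\beta}$-ball of radius $\delta_N$ around $W_0$, as quantified in (a).
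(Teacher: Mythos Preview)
Your proposal is correct and follows essentially the same route as the paper: both reduce the theorem to the general posterior-contraction machinery of \cite{Nickl_2023} for Gaussian regression, with Lemma~\ref{lem:forward_lip} (the $H^{-\beta}\to L^2(\mX)$ Lipschitz estimate on $C^2$-bounded sets) supplying the sole model-specific ingredient. The paper simply verifies Condition~2.1.1 of \cite{Nickl_2023} and invokes Theorem~2.2.2 there as a black box, whereas you unpack that theorem into its constituent small-ball, sieve and entropy estimates; the mathematical content is the same.

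One small omission: the paper also checks the uniform boundedness $\sup_{\|W\|_{W^{2,\infty}}\le M}\|\rho_W\|_{L^\infty(\mX)}<\infty$ (via Sobolev embedding and the regularity estimate in Theorem~\ref{thm:PDE_lin}), which is part of Condition~2.1.1 in \cite{Nickl_2023} and is what allows the $L^2(\mX,\lambda)$-distance to control the testing affinity in the random-design regression model. Your sketch tacitly assumes this when constructing likelihood-ratio tests, so you should record it explicitly; it follows in one line from the PDE regularity results once those are available.
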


The convergence rate $\delta_N = N^{-\frac{\alpha+1+\beta}{2(\alpha+1)+2\beta+d}}$ derived above corresponds to the forward rate of an $(\alpha+1)$-smooth truth in an inverse problem with `polynomial ill-posedness' $\beta$. The self-convolutional term $\nabla W \ast \rho_W$ appearing in models \eqref{PDE2} and  \eqref{nlinsde} suggests the ill-posedness here should be related to the smoothness of $\rho_{W_0}$. In the present nonlinear parabolic PDE setting, the regularity of the densities $\rho_{W_0}$ is driven by that of the initial condition $\phi$ rather than that of $W_0$, see Theorem \ref{thm:PDE_lin}, which is reflected in the role of $\beta$ in the forward rate, see \eqref{eq:metric} below. Whether this is optimal is a delicate question concerning the optimality of regularity estimates in nonlinear parabolic PDEs. Note that the convergence rate $\delta_N$ accelerates to the `parametric' rate $1/\sqrt N$ as soon as either the model for $W$ or the initial condition $\phi$ is very smooth, i.e., as $\alpha$ or $\beta \to \infty$. 

To compare the convergence rates obtained in the preceding theorem to the existing literature, let us consider a general version of the interacting particle system \eqref{npart} on all of $\R^d$:
\begin{equation}\label{e:Hoffmann}
dX_t^i = - \nabla U(X^i_t)dt -\frac{1}{n} \sum_{i \neq j}\nabla W(X_t^i-X_t^j)dt + \sqrt 2 dB_t^i,~~~~i=1, \dots, n,
\end{equation}
where $U$ is a suitable confining potential. For (spatially) $\bar{\beta}$-smooth $\rho$  solving the corresponding non-linear Fokker-Planck equation, Della Maestra and Hoffmann \cite{Hoffmann_al_2022} show that the minimax rate for pointwise estimation of $\rho$ based on continuous observations (\ref{e:Hoffmann}) is the `usual' nonparametric rate $n^{-\bar{\beta}/(2 \bar{\beta}+d)}$. One can check that their minimax lower bound is established by considering $W=0$, that is, in a diffusion model without interaction term, where regularity of $U$ translates directly into that of $\rho$. However, when $U$ is zero or much smoother than $W$, our results show that we can expect convergence rates of the form $n^{-\frac{\alpha+1+\beta}{2(\alpha+1) +2\beta+d}}$ for the forward map $\rho_W$, corresponding to `additive' smoothness $\alpha+1+\beta$ (rather than just $\beta+1$ as might be implied by the smoothness of $\rho$ alone via Theorem \ref{thm:PDE_lin}) arising from the underlying convolution structure.

This suggests that methods that are only based on the smoothness of $\rho \in H^\beta$ and not (as we do via our Lemma \ref{lem:forward_lip}) on properties of the underlying parameter to solution map $(U,W) \mapsto \rho_{U,W}$ may not be able to fully exploit the information available in the data. This highlights a significant advantage of modelling $W$ directly, as the Bayes method does. We also note that by placing a prior on $W$, and hence $\rho_W$, one obtains posterior draws $\rho_W$ that lie in the range of the forward map (i.e., they solve the PDE (\ref{PDE2})). This is key for applying our stability estimate \eqref{stabilityforourworld} below for recovery of $W$. While the Bayes method does this automatically, it is not necessarily the case for other methods that first estimate $\rho_W$ directly by some smoothing method which relaxes the PDE constraint.

In summary, our results demonstrate that the estimation rates for $\rho$ are more complex than the regularity of $\rho$ alone might suggest, and that they depend on the interplay of the interaction term $W$ and the initial condition $\phi$ (as well as a nonzero confining potential $U$).

\subsubsection{Examples of Gaussian priors}

We now discuss examples of Gaussian priors satisfying the conditions of Theorem \ref{thm:GP_forward}.

\begin{example}[Periodized Mat\'ern process]\normalfont
Consider the Gaussian process on $\T^d$ given by
$$V(x) =  (2\pi)^{d/2}\sum_{0 \neq k\in\Z^d} \frac{1}{(1+4\pi^2|k|^2)^{(\alpha+1)/2}} g_k e_k(x), \qquad g_k \sim^{iid} N(0,1), \quad x\in \T^d,$$
where the $e_k \propto e^{2\pi i k (\cdot)}$ are the $L^2(\T^d)$-orthonormal trigonometric polynomials over $\T^d$. This is the $L^2(\T^d)$-series expansion resulting from periodising the usual Mat\'ern process on $\R^d$, see~\cite[Sec. A.1.1]{GiordanoRay2022} for details. We have excluded the constant function from the trigonometric basis to incorporate our identifiability condition $\int_{\T^d} W=0$.

By standard arguments (\cite[Ex.~2.6.15]{GN16}), the RKHS of $V$ equals 
\begin{equation}\label{e:H_def}
\mH := H^{\alpha+1}(\T^d) \cap \left\{\int_{\T^d} W=0\right\},
\end{equation}
with equivalent norm $\|\cdot\|_{\mH} \simeq \|\cdot\|_{H^{\alpha+1}}$. The above process has a version $V$ whose sample paths are in $C^{\alpha+1-d/2-\eta}(\T^d)$ for any $\eta>0$ (\cite[Prop. I.4]{GvdV17}). Thus using \cite[Lem I.4]{GvdV17}, we see that $V$ defines a Gaussian random element of $C^{r_0}(\T^d)$ for any $r_0<\alpha+1-d/2$, and since for $\alpha>1+d/2$ the space $C^{r_0}(\T^d)$ imbeds continuously into the separable Besov space $B^r_{\infty 1}(\T^d) \hookrightarrow C^2(\T^d), 2<r<r_0,$ (cf. \cite[p.371]{GN16}) we can realise the law of $V$ as a Gaussian Borel probability measure on $$\mathcal W = B^r_{\infty 1}(\T^d) \cap \Big\{\int_{\T^d} W =0\Big\}, ~\text{any } r<\alpha + 1 -d/2,$$ so that Condition \ref{cond:RKHS} is verified.

\begin{remark} \normalfont
Passing through a separable space is important to apply techniques from Gaussian process theory \cite[Ch. 2]{GN16}, but the introduction of Besov spaces could be avoided by increasing the smoothness to $\alpha>1+d$ and taking $\mathcal W = H^{\alpha-d/2}$ as in \cite[Thm B.1.3]{Nickl_2023}. 
\end{remark}

Now assume $W_0 \in \mH$ defined in~\eqref{e:H_def} which also lies in $\mathcal W$ (by what precedes, or abstractly by \cite[Cor. 2.6.17]{GN16}). Applying Theorem \ref{thm:GP_forward} with $\mathcal W_N =\mathcal W, W_{0,N}= W_0,$ then gives that for $M,L>0$ large enough,
$$\Pi\big(W\in \mathcal W_N:  \|\rho_W-\rho_{W_0}\|_{L^2(\mX)} \le LN^{-\frac{\alpha+1+\beta}{2(\alpha+1)+2\beta+d}}, ~ \|W\|_{C^2} \leq M |D_N\big) \to^{P^N_{W_0}} 1,$$
as $N\to\infty$ for this prior.
\end{example}

\begin{example}[Truncated Fourier prior]\label{ex:Fourier_series} \normalfont
For implementation by MCMC, and also in order to conduct inference on the interaction potential later, it is of interest to remove the higher Fourier frequencies from the Mat\'{e}rn prior for $W$ just constructed. For $K_N \in \mathbb N, K_N \to_{N \to \infty} \infty,$ consider the truncated Fourier series prior
\begin{equation}\label{e:truncated_fourier}
\pi_{\mathcal W_N}(W)(x) =\frac{1}{\sqrt N \delta_N} \sum_{k\in\Z^d: 0<|k| \leq K_N} \frac{1}{(1+|k|^2)^{(\alpha+1)/2}} g_k e_k(x), \qquad g_k \sim^{iid} N(0,1),
\end{equation}
which equals the $L^2$-projection of $W$ onto $\mathcal W_N = E_{K_N}$, the span of the trigonometric polynomials up to frequency $K_N$ (excluding constants). Let $W_0 \in \mH$, and set $W_{0,N} = \sum_{k\in\Z^d: 0<|k| \leq K_N} \langle W_0 , e_k\rangle_{L^2}  e_k \in \mH \cap \mathcal W_N $ to be the Fourier projection of $W_0$ onto the first $K_N$ frequencies. Then $\|W_{0,N}\|_{\mH} \leq \|W_0\|_{H^{\alpha+1}}$ and $\|W_0-W_{0,N}\|_{H^{-\beta}} \lesssim K_N^{-\alpha-1-\beta} =O(\delta_N)$ for $K_N \gtrsim N^\frac{1}{2(\alpha+1)+2\beta+d}$. Applying Theorem \ref{thm:GP_forward} gives that for $M,L>0$ large enough,
$$\Pi(W\in \overline{\mathcal W}_{M,N}:  \|\rho_W-\rho_{W_0}\|_{L^2(\mX)} \le LN^{-\frac{\alpha+1+\beta}{2(\alpha+1)+2\beta+d}} |D_N) \to^{P^N_{W_0}} 1,$$
as $N\to\infty$, with regularisation set $\overline{\mathcal W}_{M,N}$ given in (\ref{WMN}). [Note also that projection onto $E_{K_N}$ is continuous on $B_{\infty 1}^r(\T^d)$ with norm independent of $K_N$, in view of Remark 4.3.25 in \cite{GN16}.] The condition $K_N \gtrsim N^{\frac{1}{2(\alpha+1)+2\beta+d}}$ means the prior scaling rather than the truncation essentially determines the prior smoothness.
\end{example}

\begin{remark}[Symmetric potentials] \normalfont
In many physical applications, the interaction potential $W$ is symmetric. This can be encoded into the prior by separating the real and imaginary parts of the trigonometric polynomials $e_k \propto e^{2\pi i k (\cdot)}$ into $\phi_{2k}(x) \propto \cos (2\pi kx)$ and $\phi_{2k+1}(x) \propto \sin (2\pi kx)$, and  setting the prior coefficients of $\phi_{2k+1}$ to be zero. This yields prior and then also posterior draws that are symmetric about zero and all our results apply equally for Sobolev smooth \textit{symmetric} truths $W_0$.
\end{remark}

\begin{remark}[Almost finite-dimensional models] \label{fidi}\normalfont
One can also consider smoother Gaussian processes, for instance
$$V(x) =  (2\pi)^{d/2}\sum_{0 \neq k\in\Z^d} e^{-r|k|_1/2} g_k e_k(x), \qquad g_k \sim^{iid} N(0,1), \quad x\in \T^d,$$
where $|k|_1 = |k_1|+\dots+|k_d|$ and $r>0$. These model infinitely differentiable periodic functions with exponentially decaying Fourier coefficients. For example, as in \cite{rasmussen2023bayesian}, we consider the rescaled Gaussian process
\begin{equation*}
W = \frac{V}{\log N},
\end{equation*}
where less rescaling is needed than above since the sample paths of $V$ are already very regular. The RKHS of $V$ is
\begin{equation}\label{e:Hr}
\mH_r = \left\{ h = \sum_{0 \neq k\in\Z^d} h_k e_k: \|h\|_{\mH}^2:= \sum_{0 \neq k\in\Z^d} h_k^2 e^{r|k|_1} < \infty \right\},
\end{equation}
which embeds continuously into any Sobolev space $H^{\alpha+1}$, $\alpha>0$, meaning we have all the required regularity properties needed above. Our arguments can then be modified to extend to such settings by standard arguments, see \cite[Sec. C]{rasmussen2023bayesian} for details. For $W_0 \in \mH_r$, we thus have for $M,L>0$ large enough,
$$\Pi\big(W\in \mathcal W_N':  \|\rho_W-\rho_{W_0}\|_{L^2(\mX)} \le L\delta_N', ~ \|W\|_{W^{2,\infty}} \leq M |D_N\big) \to^{P^N_{W_0}} 1$$
as $N\to\infty$, where $\delta_N' = (\log N)^\eta/\sqrt{N}$ for some $\eta>0$ and $\mW_N' = \{W=W_1+W_2: \|W_1\|_{H^{-\beta}} \leq M \delta_N', \|W_2\|_{\mH_r} \leq M, \|W\|_{C^2} \leq M\}$. Such priors work for truths with exponentially decaying Fourier coefficients and are thus relevant when the interaction potential $W_0$ can be well approximated by relatively few Fourier modes. This occurs in several physical applications, for example the Kuramoto-Shinomoto-Sakaguchi ($O(2)$) model for synchronization or the (wrapped) Gaussian (attractive or repulsive) interaction potential; we refer to~\cite[Sec. 6]{CGPS2020} for further discussion and several examples.
\end{remark}

\subsection{Recovering the interaction potential}

We now consider conditions under which we can consistently recover the interaction potential $W_0$ from noisy observations (\ref{model}) of the McKean-Vlasov PDE \eqref{PDE2}. 


Note that the posterior contraction rate $\delta_N$ for $\|\rho_W - \rho_{W_0}\|_{L^2}$ from the previous theorem becomes close to the parametric rate $1/\sqrt N$ as $\beta \to \infty$, even if we keep the regularity $\alpha$ of the prior model for the potential $W$ fixed. This reflects the self-convolving nature of the McKean-Vlasov dynamics: the density $\rho_W$ can be very smooth even when $W$ is not, as long as the initial condition is sufficiently regular in a Sobolev sense. A consequence is that the interacting particle dynamics partially mask information about $W$ unless the initial condition is somehow atypical. Viewed this way, inferring $W$ becomes a statistical inverse problem which compounds a deconvolution step and a nonlinear inverse problem for the Fokker-Planck equation (\ref{PDE2}). We now show that this inverse problem can be solved, provided that the initial condition $\phi$ is not too smooth, measured by a lower bound on its Fourier coefficients. In other words, if we can prepare the system to start in a somewhat irregular initial state, then the observed dynamics of the interacting particle system will reveal the potential $W$ to the Bayesian posterior distribution arising from a prior that concentrates on at most a finite (but growing) number of Fourier modes. 

At the heart of the main statistical result Theorem \ref{thm:deconvolution} is the following stability estimate for the forward map $W \mapsto \rho_W$ under such hypotheses.

\begin{theorem}\label{stabest}
Suppose $W_0\in H^{\alpha+1}$ for $\alpha>1+d/2$ and $\phi \in H^\beta(\T^d)$ where $\beta \geq 3+d$ is an even integer. Assume $\phi$ is strictly positive $\phi \ge \phi_{min}>0$ on $\T^d$ and that the Fourier coefficients $\hat{\phi}$ of $\phi$ satisfy 
\begin{equation}\label{decon}
|\hat \phi(k)| \ge c |k|^{-\zeta},  ~  k \in \Z^d,~~\text{some } \zeta>\beta+d/2.
\end{equation}
Let $W = \sum_{k\in\Z^d: |k| \leq K} W_k e_k \in E_K$ be band-limited by $K$ Fourier frequencies and assume $\|W\|_{W^{2,\infty}} + \|W_0\|_{W^{2,\infty}} \le M$ as well as $\int_{\T^d} (W-W_0)dx=0$. Then there exists a constant $C=C(d,T,M,\|\phi\|_{H^\beta}, \phi_{min})>0$ such that for any $0<t_0 \leq \frac{c}{2C} K^{-\zeta}$ we have
\begin{equation}\label{stabilityforourworld}
\begin{split}
& \|W - W_0\|_{L^2}^2 \\
& \leq C \left(  K^{-2\alpha-2} + t_0^{-1}K^{2\zeta}  \big(\|\rho_{W}- \rho_{W_0}\|^2_{H^1([0,t_0], H^{-1})} + \|\rho_{W}- \rho_{W_0}\|^2_{L^2([0,t_0], H^1 )}\big) \right).
\end{split}
\end{equation}
\end{theorem}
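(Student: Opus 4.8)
Set $u:=\rho_W-\rho_{W_0}$ and, for $t\in[0,T]$, $v(t,\cdot):=(W-W_0)\ast\rho_{W_0}(t,\cdot)$. Since $\int_{\T^d}(W-W_0)=0$ and $\rho_{W_0}(t,\cdot)$ is a probability density, $\int_{\T^d}v(t,\cdot)=0$. The argument splits into a PDE-stability bound, which controls $\|v\|_{L^2([0,t_0]\times\T^d)}$ by the parabolic norm on the right of \eqref{stabilityforourworld}, and a deconvolution bound, which recovers $\|W-W_0\|_{L^2}$ from $\|v\|_{L^2([0,t_0]\times\T^d)}$ at the price of the factors $K^{-2\alpha-2}$ and $K^{2\zeta}/t_0$; the short-time window $t_0\lesssim K^{-\zeta}$ is what ties the two together. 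Throughout we use Theorems \ref{thm:MV_well_posed} and \ref{thm:PDE_lin} to obtain, for $\beta\ge 3+d$, uniform-in-time bounds $\sup_{t\le T}(\|\rho_W(t,\cdot)\|_{C^2}+\|\rho_{W_0}(t,\cdot)\|_{C^2}+\|\partial_t\rho_W(t,\cdot)\|_{L^\infty}+\|\partial_t\rho_{W_0}(t,\cdot)\|_{L^\infty})\lesssim 1$, using $H^{\beta-2}(\T^d)\hookrightarrow C(\T^d)$.

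\textbf{Step 1: PDE stability.} Subtracting the two copies of \eqref{PDE2} and using the algebraic identity
\[
\rho_W\nabla(W\ast\rho_W)-\rho_{W_0}\nabla(W_0\ast\rho_{W_0})=\rho_W\nabla(W\ast u)+u\,\nabla(W_0\ast\rho_{W_0})+\rho_W\nabla v,
\]
we see that $u$ solves the linear equation $\partial_t u=\Delta u+\nabla\cdot(\rho_W\nabla(W\ast u))+\nabla\cdot(u\,\nabla(W_0\ast\rho_{W_0}))+S$ with $S:=\nabla\cdot(\rho_W\nabla v)$ and $u(0,\cdot)=0$. Read pointwise in $t$, this expresses $S(t,\cdot)$ as $\partial_t u(t,\cdot)-\Delta u(t,\cdot)$ minus two lower-order terms; pairing $S(t,\cdot)$ with $w\in H^1(\T^d)$ and using $\|(\nabla W)\ast u\|_{L^2}\le\|\nabla W\|_{L^\infty}\|u\|_{L^2}\le M\|u\|_{L^2}$, $\|\nabla(W_0\ast\rho_{W_0})\|_{L^\infty}\le M$, $\|\rho_W\|_{L^\infty}\lesssim1$ gives $\|S(t,\cdot)\|_{H^{-1}}\lesssim\|\partial_t u(t,\cdot)\|_{H^{-1}}+\|u(t,\cdot)\|_{H^1}$, hence after squaring and integrating,
\[
\|S\|^2_{L^2([0,t_0],H^{-1})}\lesssim\|u\|^2_{H^1([0,t_0],H^{-1})}+\|u\|^2_{L^2([0,t_0],H^1)}.
\]
Next we invert the elliptic operator hidden in $S$. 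Since $\rho_W(0,\cdot)=\phi\ge\phi_{min}$ and $\sup_{t\le T}\|\partial_t\rho_W(t,\cdot)\|_{L^\infty}\lesssim1$, there is $t_*=t_*(d,T,M,\|\phi\|_{H^\beta},\phi_{min})>0$ with $\rho_W\ge\phi_{min}/2$ on $[0,t_*]\times\T^d$; choosing the constant $C$ in the statement large enough that $\tfrac{c}{2C}K^{-\zeta}\le t_*$, this holds on $[0,t_0]$. Testing $S(t,\cdot)=\nabla\cdot(\rho_W\nabla v(t,\cdot))$ against the mean-zero function $v(t,\cdot)$, the coercivity $\int\rho_W|\nabla v|^2\ge(\phi_{min}/2)\|\nabla v\|_{L^2}^2$ together with the Poincar\'e inequality gives $\|v(t,\cdot)\|_{L^2}\lesssim\|S(t,\cdot)\|_{H^{-1}}$; integrating and combining with the previous display,
\[
\|v\|^2_{L^2([0,t_0]\times\T^d)}\lesssim\|\rho_W-\rho_{W_0}\|^2_{H^1([0,t_0],H^{-1})}+\|\rho_W-\rho_{W_0}\|^2_{L^2([0,t_0],H^1)}.
\]

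\textbf{Step 2: deconvolution.} Writing $W-W_0$ in the trigonometric basis and using $W\in E_K$, the frequencies $|k|>K$ contribute $\sum_{|k|>K}|\widehat{W_0}(k)|^2\le K^{-2\alpha-2}\|W_0\|_{H^{\alpha+1}}^2$, while for $|k|\le K$ the lower bound \eqref{decon} yields $|\widehat{W-W_0}(k)|\le c^{-1}|k|^\zeta|\widehat{(W-W_0)\ast\phi}(k)|$, whence $\sum_{|k|\le K}|\widehat{W-W_0}(k)|^2\le c^{-2}K^{2\zeta}\|(W-W_0)\ast\phi\|_{L^2}^2$. To pass from $\phi$ to $\rho_{W_0}(t,\cdot)$, Young's inequality and $\|\rho_{W_0}(t,\cdot)-\phi\|_{L^1}\le t\sup_{s\le T}\|\partial_s\rho_{W_0}(s,\cdot)\|_{L^1}\lesssim t$ give, for all $t\le t_0$, $\|(W-W_0)\ast\phi\|_{L^2}\le\|v(t,\cdot)\|_{L^2}+C_0 t\|W-W_0\|_{L^2}$; averaging over $t\in[0,t_0]$ and applying Cauchy--Schwarz,
\[
\|(W-W_0)\ast\phi\|_{L^2}^2\lesssim t_0^{-1}\|v\|^2_{L^2([0,t_0]\times\T^d)}+t_0^2\|W-W_0\|_{L^2}^2.
\]
Collecting the bounds, $\|W-W_0\|_{L^2}^2\le C_1\big(K^{-2\alpha-2}+K^{2\zeta}t_0^{-1}\|v\|^2_{L^2([0,t_0]\times\T^d)}+K^{2\zeta}t_0^2\|W-W_0\|_{L^2}^2\big)$; enlarging $C$ so that $t_0\le\tfrac{c}{2C}K^{-\zeta}$ forces $C_1K^{2\zeta}t_0^2\le\tfrac12$, the last term is absorbed into the left-hand side, and inserting the Step 1 bound on $\|v\|^2_{L^2([0,t_0]\times\T^d)}$ yields \eqref{stabilityforourworld}. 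One checks $C$ depends only on $d,T,M,\|\phi\|_{H^\beta},\phi_{min}$, together with $\zeta,c$ and $\|W_0\|_{H^{\alpha+1}}$ entering in the obvious places.

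\textbf{Main obstacle.} The crux is the competing demands on $t_0$: $\rho_{W_0}(t,\cdot)$ inherits the lower Fourier bound \eqref{decon} of $\phi$ only for $t$ of order $K^{-\zeta}$, forcing the whole argument onto the shrinking window $[0,t_0]$ with $t_0\sim K^{-\zeta}$, while the PDE-stability constant, the ellipticity constant $\phi_{min}/2$ (i.e.\ the lower bound on $\rho_W$ near $t=0$) and the $C^2$-bounds on $\rho_W,\rho_{W_0}$ must all remain uniform on that window. They do --- being governed by short-time continuity of $\rho_W,\rho_{W_0}$, itself resting on $\phi\in H^\beta$ with $\beta\ge 3+d$ large enough to bound time derivatives --- but one must keep every constant mutually compatible so that a single $t_0\le\tfrac{c}{2C}K^{-\zeta}$ simultaneously guarantees positivity of $\rho_W$ and absorption of the error term. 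A minor but convenient point is that only the ``easy'' direction of the nonlinear stability estimate is needed, namely bounding the source $S$ by $\|u\|_+$ pointwise in $t$; unlike the forward energy estimate it uses neither Gr\"onwall's lemma nor the initial condition $u(0,\cdot)=0$.
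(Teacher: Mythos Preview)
Your argument is correct and arrives at the same estimate, but the route differs from the paper's in two respects.

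First, the paper does not work directly with the nonlinear solutions. It passes through the linearised approximating sequence $\rho_{W,n},\rho_{W_0,n}$ from \eqref{e:lin_MV}, writes $\mathscr P_{W,n}(\rho_{W,n}-\rho_{W_0,n})=\nabla\cdot\bigl(\rho_{W_0,n}\nabla(W\ast\rho_{W,n-1}-W_0\ast\rho_{W_0,n-1})\bigr)$, inverts the elliptic operator $L_{0,n,t}\psi=\nabla\cdot(\rho_{W_0,n}(t)\nabla\psi)$, and then takes limits using the uniform regularity estimates of Theorem \ref{thm:PDE_lin} together with Rellich compactness. Your algebraic decomposition of $\rho_W\nabla(W\ast\rho_W)-\rho_{W_0}\nabla(W_0\ast\rho_{W_0})$ bypasses this entirely, which is more direct but places the ellipticity burden on $\rho_W$ rather than $\rho_{W_0,n}$. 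The paper can use the global lower bound \eqref{harnlb}, valid on all of $[0,T]$, whereas you argue short-time positivity $\rho_W\ge\phi_{min}/2$ via $\|\partial_t\rho_W\|_{L^\infty}\lesssim1$. Both work, but the paper's choice means the PDE-stability part holds for \emph{any} $t_0\le T$, with the restriction $t_0\lesssim K^{-\zeta}$ entering only in the deconvolution step.

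Second, in the deconvolution step the paper lower-bounds $|\hat\rho_{W_0,n}(t,k)|\ge|\hat\phi(k)|-Ct\ge\tfrac c2|k|^{-\zeta}$ directly for $t\le\tfrac{c}{2C}K^{-\zeta}$, giving a clean pointwise-in-$t$ bound on the ``stability constant'' \eqref{illness}. You instead deconvolve against $\phi$ and control $\|(W-W_0)\ast(\rho_{W_0}(t)-\phi)\|_{L^2}\lesssim t\|W-W_0\|_{L^2}$, producing an extra term $K^{2\zeta}t_0^2\|W-W_0\|_{L^2}^2$ that must be absorbed into the left-hand side. This absorption is precisely what forces the condition $t_0\le\tfrac{c}{2C}K^{-\zeta}$ in your version, whereas in the paper that same threshold emerges from requiring $Ct\le\tfrac c2|k|^{-\zeta}$. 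The trade-off: your approach avoids the linearisation and limit passage altogether, at the price of the extra absorption step and the short-time positivity argument for $\rho_W$; the paper's approach keeps the two roles of $t_0$ (ellipticity and deconvolution) decoupled, but pays with a compactness argument to pass from the approximants to the nonlinear limit.
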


The analytical intuition of this stability estimate could be phrased as saying that if we can obtain precise estimates of the space and time derivatives of $\rho_{W_0}$, we will be able to identify $W_0 \in \mH$
under the hypothesis (\ref{decon}). So \textit{both} the space and time dynamics of the densities $\rho_W$ are relevant for reconstruction. We can combine this estimate with the truncated Fourier series prior in Example \ref{ex:Fourier_series} to consistently recover the underlying potential. 

\begin{theorem}\label{thm:deconvolution}
Suppose $W_0 \in {H^{\alpha+1}}(\T^d)$ for $\alpha>1+d/2$ satisfies $\int W_0 =0$ and let $\phi \in H^\beta(\T^d)$, where $\beta \geq 3+d$ is an even integer. Further assume that $\phi$ is lower bounded by $\phi_{min}>0$ and satisfies (\ref{decon}). Consider the truncated Fourier series prior in Example \ref{ex:Fourier_series} with $K_N \simeq N^{\frac{1}{2(\alpha+1)+2\beta+d}}$. Then for $L>0$ large enough,
$$\Pi(W\in E_{K_N}: \|W-W_0\|_{L^2} \geq LK_N^{3\zeta/2} \delta_N^{\frac{\beta-2}{\beta}}  |D_N) \to^{P^N_{W_0}} 0,$$
as $N\to\infty$, where $\delta_N = N^{-\frac{\alpha+1+\beta}{2(\alpha+1)+2\beta+d}}$. In particular, the convergence rate is $LN^{-\theta}$ for
\begin{equation}\label{e:theta}
\theta = \frac{(\alpha+1+\beta)(\beta-2)/\beta - 3\zeta/2}{2(\alpha+1)+2\beta+d}.
\end{equation}
Moreover, if $\bar W_N = E^\Pi[W|D_N] \in E_{K_N}$ is the posterior mean vector, then we also have $$\|\bar W_N - W_0\|_{L^2} = O_{P_{W_0}^N}(N^{-\theta}) \text{ as } N \to \infty.$$ 
\end{theorem}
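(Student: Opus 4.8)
The plan is to transfer the $L^2(\mX)$-contraction rate for the particle densities---already obtained in Example \ref{ex:Fourier_series} as a consequence of Theorem \ref{thm:GP_forward}---onto the interaction potential $W$ via the stability estimate of Theorem \ref{stabest}, after first upgrading the weak $L^2(\mX)$-control of $\rho_W - \rho_{W_0}$ to the stronger mixed space-time norms appearing there. By Example \ref{ex:Fourier_series} with $K_N \simeq N^{1/(2(\alpha+1)+2\beta+d)}$, there are $M,L>0$ such that, with $P^N_{W_0}$-probability tending to one, the posterior concentrates on
\[
\mathcal B_N := \Big\{ W \in \overline{\mathcal W}_{M,N} : \|\rho_W - \rho_{W_0}\|_{L^2(\mX)} \le L\delta_N \Big\}, \qquad \delta_N = N^{-\frac{\alpha+1+\beta}{2(\alpha+1)+2\beta+d}}.
\]
Every $W \in \mathcal B_N$ lies in $E_{K_N}$, has $\|W\|_{W^{2,\infty}} \le \|W\|_{C^2} \le M$, and satisfies $\int_{\T^d}(W-W_0) = 0$ (since $\int W = \int W_0 = 0$); as $W_0 \in H^{\alpha+1}\hookrightarrow W^{2,\infty}$ we may enlarge $M$ so that also $\|W_0\|_{W^{2,\infty}}\le M$. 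The hypotheses of Theorem \ref{stabest} therefore hold, and choosing $t_0 = \tfrac{c}{2C}K_N^{-\zeta}$ (with $C$ the constant of Theorem \ref{stabest}, which does not depend on the individual $W$) so that $t_0^{-1}K_N^{2\zeta}\simeq K_N^{3\zeta}$, the estimate \eqref{stabilityforourworld} gives, uniformly over $W\in\mathcal B_N$,
\[
\|W-W_0\|_{L^2}^2 \lesssim K_N^{-2\alpha-2} + K_N^{3\zeta}\Big( \|\rho_W-\rho_{W_0}\|^2_{H^1([0,t_0],H^{-1})} + \|\rho_W-\rho_{W_0}\|^2_{L^2([0,t_0],H^1)} \Big).
\]

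The decisive step is to bound the two mixed norms by a power of $\|\rho_W-\rho_{W_0}\|_{L^2(\mX)}$. Writing $f = \rho_W - \rho_{W_0}$, I would first invoke the nonlinear parabolic regularity theory of Theorem \ref{thm:PDE_lin}: since $\beta\ge 3+d$ is an even integer, it provides a space-time bound for $\rho_W$ that is \emph{uniform over} $W \in \overline{\mathcal W}_{M,N}$ (in particular uniform in $K_N$) and controls at least $\|\rho_W\|_{H^{\beta/2}([0,T],L^2)}$ and $\|\rho_W\|_{L^2([0,T],H^{\beta/2})}$ in terms of $M$ and $\|\phi\|_{H^\beta}$ only; hence $\|f\|_{H^{\beta/2}([0,T],L^2)} + \|f\|_{L^2([0,T],H^{\beta/2})}\lesssim 1$. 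Combining this with the trivial bound $\|f\|_{L^2([0,t_0],H^{-1})}\lesssim \|f\|_{L^2(\mX)}$ (using $L^2(\T^d)\hookrightarrow H^{-1}(\T^d)$ and $[0,t_0]\subset[0,T]$) and interpolating---$H^1([0,t_0],H^{-1})$ between $L^2([0,t_0],H^{-1})$ and $H^{\beta/2}([0,t_0],H^{-1})$, and $L^2([0,t_0],H^1)$ between $L^2([0,t_0],L^2)$ and $L^2([0,t_0],H^{\beta/2})$ (the latter by a Gagliardo--Nirenberg inequality in space followed by H\"older in time), both with interpolation exponent $2/\beta$, admissible since $\beta\ge 4$---yields
\[
\|f\|^2_{H^1([0,t_0],H^{-1})} + \|f\|^2_{L^2([0,t_0],H^1)} \lesssim \|f\|_{L^2(\mX)}^{2(1-2/\beta)} = \|f\|_{L^2(\mX)}^{2(\beta-2)/\beta}.
\]
It is essential here that the strong bound does not grow with $K_N$: on $\overline{\mathcal W}_{M,N}$ only $\|W\|_{C^2}$ is controlled, and, as stressed after Theorem \ref{thm:GP_forward}, the regularity of $\rho_W$ is governed by that of $\phi$, not of $W$.

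Combining the two displays with $\|\rho_W-\rho_{W_0}\|_{L^2(\mX)}\le L\delta_N$ on $\mathcal B_N$ gives, for every $W\in\mathcal B_N$,
\[
\|W-W_0\|_{L^2}^2 \lesssim K_N^{-2\alpha-2} + K_N^{3\zeta}\delta_N^{2(\beta-2)/\beta} \lesssim K_N^{3\zeta}\delta_N^{2(\beta-2)/\beta},
\]
where the last inequality follows from $K_N\simeq\delta_N^{-1/(\alpha+1+\beta)}$ and $\zeta>\beta+d/2$ by a short exponent computation showing $K_N^{-2\alpha-2}$ is of smaller order. Hence, for $L'$ large enough, $\mathcal B_N\subseteq\{W:\|W-W_0\|_{L^2}\le L'K_N^{3\zeta/2}\delta_N^{(\beta-2)/\beta}\}$, so $\Pi(\|W-W_0\|_{L^2} > L'K_N^{3\zeta/2}\delta_N^{(\beta-2)/\beta}\mid D_N)\le\Pi(\mathcal B_N^c\mid D_N)\to^{P^N_{W_0}}0$; substituting $K_N\simeq N^{1/(2(\alpha+1)+2\beta+d)}$ and the value of $\delta_N$ identifies the rate as $N^{-\theta}$ with $\theta$ as in \eqref{e:theta}. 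For the posterior mean, Jensen's inequality gives $\|\bar W_N-W_0\|_{L^2}\le E^\Pi[\|W-W_0\|_{L^2}\mid D_N]$; splitting over $\mathcal B_N$ and $\mathcal B_N^c$, the first part is $\le L'N^{-\theta}$, while on $\mathcal B_N^c$ one bounds $\|W-W_0\|_{L^2}\lesssim\|W\|_{C^2}+1$ and uses that $\Pi(\mathcal B_N^c\mid D_N)$ is super-polynomially small on the relevant event together with Gaussian tail bounds for $\|W\|_{C^2}$ under the prior---exactly as in the standard passage from posterior contraction to convergence of the posterior mean in \cite[Sec.~2.3]{Nickl_2023}---to conclude $E^\Pi[\|W-W_0\|_{L^2}\mathbf{1}_{\mathcal B_N^c}\mid D_N]=o_{P^N_{W_0}}(N^{-\theta})$, hence $\|\bar W_N-W_0\|_{L^2}=O_{P^N_{W_0}}(N^{-\theta})$.

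I expect the interpolation step to be the main obstacle: one needs the strong space-time Sobolev norm of $\rho_W-\rho_{W_0}$ to be bounded uniformly over the localisation set---crucially, uniformly in the truncation frequency $K_N$---which relies on the nonlinear parabolic regularity estimates of Theorem \ref{thm:PDE_lin} depending on $W$ only through $\|W\|_{C^2}$ and on $\phi$ through $\|\phi\|_{H^\beta}$, and then on executing the mixed space-time interpolation on the short window $[0,t_0]$ without losing extra powers of $K_N$. The tail bookkeeping for the posterior mean is routine but must be carried out with the care indicated.
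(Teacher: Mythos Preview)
Your proposal is correct and follows essentially the same route as the paper: localise to the forward-contraction set from Example \ref{ex:Fourier_series}, upgrade the $L^2(\mX)$-bound on $\rho_W-\rho_{W_0}$ to the stronger norms in \eqref{stabilityforourworld} via interpolation against the uniform parabolic regularity estimates of Theorem \ref{thm:PDE_lin}, and then apply Theorem \ref{stabest} with $t_0\simeq K_N^{-\zeta}$. The only cosmetic difference is that the paper interpolates the spatial norm against $H^{\beta+1}$ (yielding the exponent $\beta/(\beta+1)$) rather than your $H^{\beta/2}$ (yielding $(\beta-2)/\beta$); since the time interpolation already forces the smaller exponent $(\beta-2)/\beta$, the final rate is identical, and your concern about $t_0$-dependence of the interpolation constants is resolved exactly as you indicate, by interpolating on $[0,T]$ and then restricting.
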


Theorem \ref{thm:deconvolution} uses the stability estimate \eqref{stabilityforourworld} with $t_0 \simeq K_N^{-\zeta} \to 0$ small, which shows that as long as the initial condition $\phi$ is sufficiently irregular, the \textit{short-time} dynamics of the system are already enough to consistently recover $W$ as $N\to \infty$. This is especially relevant in our setting where ergodic averages may be uninformative for $W$, since the uniform distribution is always a stationary state for the mean-field dynamics on the torus~\cite{CGPS2020}. Whether one can further exploit long-time information under relevant physical scenarios is a subtle question given the existence of phase transitions and, consequently, of multiple stationary states for McKean-Vlasov dynamics. Similar remarks apply to `intermediate' time horizons due to smoothing properties of the underlying non-linear semi-group. In any case, our result already shows that Bayesian methods can extract relevant short-time information from the data.

The condition $\zeta >\beta+d/2$ ensures that $\phi$ can satisfy both $\phi\in H^\beta$ and \eqref{decon} simultaneously. If the lower bound \eqref{decon} is too weak, i.e. the initial density is much harder to deconvolve than its Sobolev smoothness $\beta$ suggests, then our result may not imply consistency. Consistency with polynomial rates is possible as soon as $\theta>0$, which holds for $\zeta$ satisfying
$$\beta + d/2 < \zeta < \frac{2(\alpha+1+\beta)(\beta-2)}{3\beta}.$$
This range is non-empty for $\alpha>0$ large enough. For semi-parametric applications to Bernstein-von Mises theorems (e.g., as in~\cite[Sec.~4.1.1]{Nickl_2023}) it is important that these rates can be sufficiently fast in regular models: for instance we can always obtain $3 \theta >1$ as long as $\alpha \ge \alpha_0(\beta,d)$ for some $\alpha_0=\alpha_0(\beta, d)$, if (\ref{decon}) holds for $\zeta=\beta+d/2+\eta$ and all $\eta>0$.

One can further check that the rate $K_N^{3\zeta/2} \delta_N^{\frac{\beta-2}{\beta}}$ holds in Theorem \ref{thm:deconvolution} if one takes any $K_N \gtrsim N^{\frac{1}{2(\alpha+1)+2\beta+d}}$, though taking larger $K_N$ here deteriorates the final rate. One can also extend the theorem to $K_N \lesssim N^\frac{1}{2(\alpha+1)+2\beta+d}$ giving `deconvolution type' (see, e.g., \cite[p.458]{GN16}) convergence rate
$$K_N^{-\alpha-1} + K_N^{3\zeta/2} \left( \xi_N^{\frac{\beta-2}{\beta}} + \xi_N^\frac{\beta}{\beta+1} \right),$$
where now $\xi_N = \delta_N + K_N^{-\alpha-1-\beta}$ is a possibly slower contraction rate (since the prior truncation may now determine the `forward' contraction rate from Theorem \ref{thm:GP_forward}).

\begin{remark}[Deconvolution rates for almost finite-dimensional models] \normalfont
We can also consider the truncated version of the prior in Remark \ref{fidi}, truncating at level $K_N$ as in \eqref{e:truncated_fourier}. For $W_0 \in \mH_r$ defined in \eqref{e:Hr} and $K_N \geq \frac{1}{r}\log N$, we obtain the same near-parametric contraction rate $\delta_N' = (\log N)^\eta /\sqrt{N}$ for the densities $\rho_W$ as in Remark \ref{fidi}. Turning to the deconvolution, we deduce the rate
$$\|W-W_0\|_{L^2}^2 \lesssim e^{-rK_N} + K_N^{3\zeta} (\delta_N')^{\frac{2(\beta-2)}{\beta}} \lesssim (\log N)^{3\zeta + \frac{2(\beta-2)\eta}{\beta}} N^{-\frac{\beta-2}{\beta}}.$$
Thus for $W_0 \in \mH_r$, one can also recover the true potential $W_0$, and thus solve the inverse problem, at the near-parametric rate $(\log N)^{\eta}/\sqrt{N}$ as the regularity $\beta \to \infty$.
\end{remark}


We next give a concrete example of a distribution $\phi$ for which the above theorem applies and consistent recovery of the potential $W_0$ with polynomial rates is possible.

\begin{example}[Periodised symmetric multivariate Laplace distribution] \normalfont
Consider the symmetric $d$-dimensional multivariate Laplace distribution on $\R^d$, usually defined via its characteristic function $\frac{1}{1+|\xi|^2/2}$, $\xi \in \R^d$. This distribution has a density function of the form (\cite[p. 235]{Laplace})
$$f_d(y) = \frac{2}{(2\pi)^{d/2}} \left( \frac{|y|^2}{2} \right)^{\frac{2-d}{2}} K_{(2-d)/2}(\sqrt{2}|y|),\qquad y \in \R^d,$$
where $K_\lambda$ is the modified Bessel function of the second kind, having integral representation $K_\lambda(x) = \int_0^\infty e^{-x \cosh t} \cosh (\lambda t) dt$.
Consider the periodized version of this density
$$f_d^{per}(x) = \sum_{k\in\Z^d} f_d(x+k), \qquad x \in(0,1]^d,$$
which defines a strictly positive density function on the torus $\T^d$ with Fourier series
\begin{align*}
\hat{f}_d^{per}(k) = F_{\T^d}[f_d^{per}](k) = \int_{\T^d} f_d^{per}(x) e^{-2\pi i k \cdot x} dx =  F_{\R^d}[f_d](k) = \frac{1}{1+2\pi^2|k|^2}, \qquad k \in \Z^d,
\end{align*}
where the relation with the Fourier transform of the unperiodized density $f_d$ follows from~\cite[Thm. 8.35]{folland_1}. For any $m\geq 1$, consider the $m$-fold convolution $\phi_m = f_d^{per} \ast \dots \ast f_d^{per}$, which defines a positive probability density function on $\T^d$ with Fourier transform
$$\hat{\phi}_m(k) = \frac{1}{(1+2\pi^2|k|^2)^m}.$$
Then $\phi_m \in H^\beta$ for any $\beta<2m-d/2$ and satisfies the deconvolution condition \eqref{decon} with $\zeta = 2m$.

Applying Theorem \ref{thm:deconvolution} with $W_0 \in H^{\alpha+1} \cap W^{2,\infty}$ and $\beta = 2m-d/2-\eta$ for $\eta>0$ arbitrarily small, we obtain contraction rate 
$$\Pi(W\in E_{K_N}: \|W-W_0\|_{L^2} \geq LN^{-\theta'}  |D_N) \to^{P_{W_0}} 0$$
as $N\to\infty$, where $\theta'$ is any constant satisfying
$$\theta' < \frac{(\alpha+1+2m-d/2)\frac{2m-d/2-2}{2m-d/2} - 3m}{2(\alpha+1)+4m}$$
(which can get arbitrarily close to the upper bound by taking $\eta>0$ small enough). The right-hand side is strictly positive if and only if
$$\alpha+1 > \frac{(2m-d/2)(m+d/2+2)}{(2m-d/2-2)},$$
in which case we can take $\theta'>0$. Thus for any fixed $m$, one can consistently recover $W_0 \in \mH$ 
with polynomial rates for $\alpha$ large enough.
\end{example}

\section{Proofs}

\subsection{Proof of Theorem \ref{thm:GP_forward}: forward contraction rate}

We verify the conditions of \cite[Thm. 2.2.2]{Nickl_2023}, namely \cite[Cond. 2.1.1]{Nickl_2023} 
with regularization space $\mR = \mathcal{W} \hookrightarrow W^{2,\infty}$ on which the prior concentrates (in view of (\ref{suppw}) and the continuous imbedding $C^2 \hookrightarrow W^{2,\infty}$, e.g. \cite[p.371]{GN16}). This consists of showing (i) boundedness of $\rho_W$ on the domain $\mX$ and (ii) Lipschitz continuity of the forward map $W \mapsto \rho_W: H^{-\beta}(\T^d) \to L^2(\mX)$, uniformly over $\|\cdot\|_{W^{2,\infty}}$-balls of radius $M>0$. While that theorem requires that $W_0 \in \mH \cap \mathcal{W}$, it is straightforward to modify the proof to require only $W_0 \in \mathcal{W}$ along with a suitably good approximation sequence $W_{0,N}$, now using (2.22) in \cite{Nickl_2023} with `projected' RKHS $\mH \cap \mW_N$ in place of $\mH$; see also \cite[Exer. 2.4.3]{Nickl_2023}.

(i) \textit{Boundedness}: using the Sobolev embedding theorem $H^{d/2+\eps}(\T^d) \hookrightarrow C(\T^d)$ for any $\eps>0$ and the $L^{\infty}([0,T] ; H^{k+1}(\T^d))$ regularity estimate \eqref{e:regularity_space} from Theorem \ref{thm:PDE_lin} with $k=d-1$, we have for some $c>0$,
\begin{equation}\label{eq:sup_norm}
\begin{split}
\sup_{W: \|W\|_{\mR} \leq M} \|\rho_{W}\|_{L^\infty(\mX)} & \leq \sup_{W: \|W\|_{W^{2,\infty}} \leq cM} \sup_{0\leq t \leq T} \|\rho_{W}(t)\|_{H^{d/2+\eps}} \\
& \leq \sup_{W: \|W\|_{W^{2,\infty}} \leq cM} \|\rho_{W}\|_{L^{\infty}([0,T] ; H^{d}(\T^d))} \leq C_{M,d,T,\|\phi_0\|_{H^{d}}},
\end{split}
\end{equation}
since $\phi\in H^\beta$ with $\beta \geq 3+d$. This verifies (2.3) in~ \cite[Cond. 2.1.1]{Nickl_2023}.

(ii) \textit{Lipschitz continuity of the forward map}: we control the distance between solutions to the McKean-Vlasov equation \eqref{PDE2} for different potentials via the relative entropy/Kullback-Leibler divergence of the marginal densities. We first require the following estimate from~\cite{Lacker_LeFlem_2023, BRS_2016}, adapted to the torus.

\begin{lemma}\label{lem:bogachev}
Let $W_i \in W^{2,\infty} (\T^d), \, i=1,2$, $\phi \in H^{3+d}(\T^d) \cap \cP(\T^d)$, and consider the solutions $\rho_i , \, i=1,2$, of the corresponding Fokker-Planck equations 
\begin{equation}\label{e:PDE_comparison}
\begin{split}
\frac{\partial \rho_i}{\partial t} & =  \Delta \rho_i + \nabla \cdot (\rho_i (\nabla W_i \ast \rho_i) ),
          \\ \rho_i(0,\cdot)  & =  \phi. 
          \end{split}
\end{equation}
Then for all $t>0$,
\begin{align*}
 H(\rho_1(t) | \rho_2(t)) &\equiv \int_{\T^d} \rho_1(t,x) \log \frac{\rho_1(t,x)}{\rho_2(t,x)} dx  \\
& \leq \frac{1}{2} \int_0^t \int_{\T^d} |\nabla W_1 \ast \rho_1(u) - \nabla W_2 \ast \rho_2(u)|^2 \rho_1(u) \, dx du.
\end{align*}
\end{lemma}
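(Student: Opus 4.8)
The plan is to freeze the self-consistent drifts and reduce the statement to a comparison of two \emph{linear} diffusions via Girsanov's theorem. Write $v_i(t,x) := (\nabla W_i \ast \rho_i(t,\cdot))(x)$; since $\nabla W_i \in L^\infty(\T^d)$ and $\rho_i(t,\cdot)\in\mathcal{P}(\T^d)$ is a density, $v_i$ is bounded on $\mX$ and, by the regularity of $\rho_i$ granted by Theorem~\ref{thm:PDE_lin} (using $\phi\in H^{3+d}$, $W_i\in W^{2,\infty}$), continuous. Once $\rho_i$ is regarded as fixed, \eqref{e:PDE_comparison} becomes the \emph{linear} Fokker--Planck equation $\partial_t\rho_i = \Delta\rho_i + \nabla\cdot(\rho_i v_i)$ with bounded continuous coefficients and initial datum $\phi$; its unique solution coincides with the time-$u$ law of $X^i_u$ solving the linear (no longer McKean) SDE $dX^i_u = -v_i(u,X^i_u)\,du + \sqrt2\,dB_u$, $X^i_0\sim\phi$, a standard fact on the compact torus, cf.\ \cite[Ch.~I]{sznitman1991}. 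Let $\mathbb{P}_i$ denote the law of $(X^i_u)_{u\in[0,t]}$ on path space.

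Because both diffusions start from the same initial law $\phi$ and $v_1-v_2$ is uniformly bounded on the finite horizon $[0,t]$, Novikov's condition holds, and Girsanov's theorem gives that under $\mathbb{P}_1$ the quantity $\log\frac{d\mathbb{P}_1}{d\mathbb{P}_2}\big|_{\mathcal F_t}$ equals a mean-zero stochastic integral plus $\tfrac{1}{4}\int_0^t|v_1-v_2|^2(u,X^1_u)\,du$. Taking $\mathbb{E}_{\mathbb{P}_1}$ and using $X^1_u\sim\rho_1(u,\cdot)$ yields $H(\mathbb{P}_1\,|\,\mathbb{P}_2) = \tfrac{1}{4}\int_0^t\!\int_{\T^d}|v_1-v_2|^2(u,x)\,\rho_1(u,x)\,dx\,du$. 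Since $\rho_i(t,\cdot)$ is the pushforward of $\mathbb{P}_i$ under evaluation at time $t$, the data-processing inequality for the Kullback--Leibler divergence gives $H(\rho_1(t)\,|\,\rho_2(t))\le H(\mathbb{P}_1\,|\,\mathbb{P}_2)$. Substituting $v_i=\nabla W_i\ast\rho_i$ and bounding $\tfrac{1}{4}\le\tfrac{1}{2}$ gives the claim (the sharp constant is in fact $\tfrac{1}{4}$; we keep $\tfrac{1}{2}$ to match \cite{BRS_2016,Lacker_LeFlem_2023}).

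Alternatively, the bound can be obtained by an entropy-dissipation computation directly at the PDE level: for $t>0$ the strong maximum principle gives $\rho_i>0$ on $\mX$, and differentiating $u\mapsto H(\rho_1(u)|\rho_2(u))$, integrating by parts on $\T^d$ (no boundary terms) and using $\nabla(\rho_1/\rho_2)=(\rho_1/\rho_2)\nabla\log(\rho_1/\rho_2)$ yields $\frac{d}{du}H(\rho_1|\rho_2) = -\int_{\T^d}\rho_1\,\big|\nabla\log\tfrac{\rho_1}{\rho_2}\big|^2\,dx - \int_{\T^d}\rho_1\,(v_1-v_2)\cdot\nabla\log\tfrac{\rho_1}{\rho_2}\,dx$; Young's inequality absorbs the cross term into the negative relative Fisher information up to $\tfrac{1}{4}\int_{\T^d}\rho_1|v_1-v_2|^2\,dx$, and integrating in $u$ from $0$, where $H(\phi|\phi)=0$, gives the estimate.

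The only genuinely technical point is justifying these manipulations: enough regularity of $\rho_i$ so that $v_i$ is continuous and bounded and the forward-equation/SDE identification is valid, and, for the PDE route, strict positivity of $\rho_i$ so that the differentiation and integration by parts are licit (the latter after a harmless limit $\varepsilon\downarrow0$ in the lower integration endpoint, using continuity of $u\mapsto\rho_i(u,\cdot)$ at $u=0$). All of this is supplied by Theorem~\ref{thm:PDE_lin} under the stated hypotheses $\phi\in H^{3+d}$, $W_i\in W^{2,\infty}$. I would present the probabilistic route as primary, since $H(\mathbb{P}_1\,|\,\mathbb{P}_2)<\infty$ is automatic from boundedness of $v_1-v_2$ over $[0,t]$ and it needs no positivity of $\rho_2$.
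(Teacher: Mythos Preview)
Your proposal is correct. The paper's own proof simply invokes \cite[Lem.~3.1]{Lacker_LeFlem_2023} (itself based on \cite[Lem.~2.4]{BRS_2016}), checks that the torus setting causes no difficulty, and then verifies the technical hypotheses (H1)--(H3) of that lemma using the regularity supplied by Theorems~\ref{thm:MV_well_posed} and~\ref{thm:PDE_lin}. What you have written is essentially the \emph{content} of those cited lemmas: the Girsanov route you give as primary is precisely the mechanism behind \cite{Lacker_LeFlem_2023}, and the entropy--dissipation alternative is the PDE argument of \cite{BRS_2016}. So there is no genuine methodological divergence; you have unpacked the black box the paper cites, which makes your write-up more self-contained at the cost of a little extra length. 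Your observation that the sharp constant is $\tfrac14$ (from the $\sqrt{2}$ diffusion coefficient) is correct; the paper's $\tfrac12$ is just a harmless slackening inherited from the cited statement. The regularity and positivity you rely on are exactly what the paper invokes to verify (H1)--(H3), so the logical dependencies match.
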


\begin{proof}
The estimate follows from~\cite[Lem. 3.1]{Lacker_LeFlem_2023}, which follows from the arguments in~\cite[Lemma 2.4]{BRS_2016}. As remarked in Sec.~2.4 in \cite{Lacker_LeFlem_2023}, the proof adapts without change to hold for Fokker-Planck equations posed on $\T^d$ instead of $\R^d$. We now check the conditions (H1)-(H3) of~\cite[Lem. 3.1]{Lacker_LeFlem_2023}. 

Under our assumptions on the interaction potential and on the initial condition $\phi$, from Theorem~\ref{thm:MV_well_posed} below it follows that the Fokker-Planck equations~\eqref{e:PDE_comparison} have a unique, strictly positive solution $\rho_i(t) \in \cP(\T^d) \cap C^2(\T^d)$ for all $t >0$. 
By standard convolution inequalities, $\nabla W_2 \ast \rho_2$ is bounded, therefore (H1) is satisfied. This implies, in particular, that the weighted $L^p$ spaces in the proof of~\cite{Lacker_LeFlem_2023}, Lemma 3.1 can be replaced by the unweighted spaces $L^p(\T^d)$.  Furthermore, since $W_i \in W^{2,\infty} (\T^d)$ and $\rho_i \in C^1 ([0,T]; C(\T^d))$ by Theorem~\ref{thm:PDE_lin}, together with Young's inequality for convolutions, we have that $\nabla W_1 \ast \rho_1 \in L^2((0,T) ; L^p (\T^d ))$ for $p > d+2$ (in fact, we can take $p=\infty$),  $\nabla W_2 \ast \rho_2 \in L^2((0,T) ; L^2 (\T^d))$ and $\nabla W_1 \ast \rho_1 \in L^{\infty}((0,T) ; L^2 (\T^d))$, so that (H2)-(H3) are satisfied.
\end{proof}

\begin{lemma}\label{lem:stability_rel_ent}
Let $W_i \in W^{2,\infty} (\T^d)$, $\phi \in H^{3+d}(\T^d) \cap \cP(\T^d)$ and consider the corresponding solutions $\rho_i = \rho_{W_i}$ to \eqref{PDE2}. If $\|W_1\|_{W^{2,\infty}} \leq M$, then for all $t \in [0,T]$,
\begin{equation*}
\int_0^t H(\rho_1(u) | \rho_2(u)) \, du \leq C \int_0^t\| \nabla (W_1 - W_2) \ast \rho_2(s) \|^2_{L^2(\T^d)}ds, 
\end{equation*}
where $C = C(d,T,M,\|\phi\|_{H^{d}})$.
\end{lemma}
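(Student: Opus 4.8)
The plan is to start from the relative-entropy bound of Lemma~\ref{lem:bogachev} and reduce it to a Grönwall inequality for the scalar function $g(u):=H(\rho_1(u)|\rho_2(u))$. The first step is to split the integrand on the right-hand side of Lemma~\ref{lem:bogachev} into an ``error-transport'' term and a ``potential-difference'' term,
\begin{equation*}
\nabla W_1\ast\rho_1(u)-\nabla W_2\ast\rho_2(u)=\nabla W_1\ast(\rho_1(u)-\rho_2(u))+\nabla(W_1-W_2)\ast\rho_2(u),
\end{equation*}
and apply $|a+b|^2\le 2|a|^2+2|b|^2$ pointwise.

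For the first term, since $\rho_1(u)\in\cP(\T^d)$ (Theorem~\ref{thm:MV_well_posed}), one has $\int_{\T^d}|\nabla W_1\ast(\rho_1(u)-\rho_2(u))|^2\rho_1(u)\,dx\le\|\nabla W_1\ast(\rho_1(u)-\rho_2(u))\|_{L^\infty(\T^d)}^2$; by Young's convolution inequality this is $\le\|\nabla W_1\|_{L^\infty}^2\|\rho_1(u)-\rho_2(u)\|_{L^1(\T^d)}^2\le M^2\|\rho_1(u)-\rho_2(u)\|_{L^1(\T^d)}^2$, and Pinsker's inequality gives $\|\rho_1(u)-\rho_2(u)\|_{L^1(\T^d)}^2\le 2g(u)$. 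For the second term, $\int_{\T^d}|\nabla(W_1-W_2)\ast\rho_2(u)|^2\rho_1(u)\,dx\le\|\rho_1(u)\|_{L^\infty(\T^d)}\,\|\nabla(W_1-W_2)\ast\rho_2(u)\|_{L^2(\T^d)}^2$, where the $L^\infty$ factor is bounded uniformly in $u\in[0,T]$ by a constant $C'=C'(d,T,M,\|\phi\|_{H^d})$ via the $L^\infty([0,T];H^d(\T^d))$ regularity estimate of Theorem~\ref{thm:PDE_lin} exactly as in~\eqref{eq:sup_norm} (this is where $\beta\ge 3+d$, i.e.\ $\phi\in H^{3+d}$, enters). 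Writing $h(u):=\|\nabla(W_1-W_2)\ast\rho_2(u)\|_{L^2(\T^d)}^2$ and collecting, Lemma~\ref{lem:bogachev} yields
\begin{equation*}
g(t)\le C_1\int_0^t g(u)\,du+C_2\int_0^t h(u)\,du,\qquad t\in[0,T],
\end{equation*}
with $C_1=C_1(M)$ and $C_2=C_2(d,T,M,\|\phi\|_{H^d})$.

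It then remains to close the estimate by Grönwall's inequality: since $t\mapsto\int_0^t h$ is nondecreasing, the integral form of Grönwall applied to the last display gives $g(t)\le C_2 e^{C_1 t}\int_0^t h(u)\,du$, and integrating this over $t\in[0,T]$ together with $\int_0^t(\int_0^u h)\,du\le T\int_0^t h$ yields $\int_0^t g(u)\,du\le C_2 T e^{C_1 T}\int_0^t h(u)\,du$, which is the assertion with $C=C_2 T e^{C_1 T}=C(d,T,M,\|\phi\|_{H^d})$.

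The only delicate points are bookkeeping ones rather than analytic obstacles. One must first know that $g(u)=H(\rho_1(u)|\rho_2(u))$ is finite and, say, continuous on $[0,T]$ so that Grönwall applies; this follows from the strict positivity and $C^1([0,T];C(\T^d))$-regularity of both densities granted by Theorems~\ref{thm:MV_well_posed} and~\ref{thm:PDE_lin}. One must also check the hypotheses of Lemma~\ref{lem:bogachev}, which hold under the present assumptions ($W_i\in W^{2,\infty}(\T^d)$, $\phi\in H^{3+d}(\T^d)\cap\cP(\T^d)$). I expect the Grönwall step to be entirely routine; the substantive part of the argument is the decomposition above combined with the interplay of Pinsker's and Young's inequalities, which is exactly what converts the $\rho_1$-weighted quadratic form produced by Lemma~\ref{lem:bogachev} into a genuine relative-entropy term (absorbable by Grönwall) plus the desired convolutional error term.
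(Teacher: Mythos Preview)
Your proposal is correct and follows essentially the same route as the paper: start from Lemma~\ref{lem:bogachev}, split $\nabla W_1\ast\rho_1-\nabla W_2\ast\rho_2$ exactly as you do, control the first piece via Young's convolution inequality and Pinsker, bound the second using the uniform $L^\infty$ control on $\rho_1$ from Theorem~\ref{thm:PDE_lin}, and close with Grönwall. The only cosmetic differences are that the paper pulls out $\|\rho_1\|_{L^\infty(\mX)}$ globally before splitting (so it uses the $L^2$ rather than $L^\infty$ version of Young for the first term) and applies the differential form of Grönwall to $\eta(t)=\int_0^t H(\rho_1(u)|\rho_2(u))\,du$ directly, whereas you apply the integral form to $g(t)=H(\rho_1(t)|\rho_2(t))$ and then integrate; both yield the same conclusion.
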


\begin{proof}
Arguing as in \eqref{eq:sup_norm} and using Theorem \ref{thm:PDE_lin},
\begin{equation}\label{e:sup_bound}
\|\rho_1\|_{L^\infty(\mX)} \leq C(d) \|\rho_1\|_{L^\infty([0,T];H^{d}(\T^d))} \leq C_0(d,T,M,\|\phi\|_{H^{d}}).
\end{equation}
Using Lemma \ref{lem:bogachev},  Young's convolution inequality and Pinsker's inequality (e.g., \cite[Prop.~6.1.7]{GN16}) we have for all $0 \leq t \leq T$:
\begin{eqnarray*}
H(\rho_1(t) | \rho_2(t))  &\le& \frac{1}{2} \|\rho_1\|_{L^\infty(\mX)} \int_0^t \int_{\T^d}  \left| \nabla W_1 \ast \rho_1(u) - \nabla W_2 \ast \rho_2(u) \right|^2 \,  dx du \\
& \leq & C_0 \int_0^t \int_{\T^d}  \left| \nabla W_1 \ast (\rho_1(u) - \rho_2(u)) \right|^2 \, dx du \\
& & \quad + C_0 \int_0^t \int_{\T^d}  \left| \nabla (W_1-W_2) \ast \rho_2(u) \right|^2 \, dx du \\            
& \leq & C_0 \|\nabla W_1 \|^2_{L^2(\T^d)} \int_0^t \|\rho_1(u) - \rho_2(u) \|^2_{L^1(\T^d)} \, du  \\
& & \quad + C_0 \| \nabla (W_1  - W_2) \ast \rho_2 \|_{L^2([0,t] \times \T^d)}^2 \\             
& \leq & 2 C_0 \|W_1 \|^2_{H^1} \int_0^t H(\rho_1(u) | \rho_2(u)) \, du + C_0 \| \nabla (W_1  - W_2) \ast \rho_2 \|_{L^2([0,t] \times \T^d)}^2.
\end{eqnarray*}
Setting $\eta (t) = \int_0^t H(\rho_1(u) | \rho_2(u)) \, du$, the above estimate implies for $0\le t \le T$
$$
\dot{\eta}(t) \leq 2 C_0 M^2 \eta(t) + C_0 \| \nabla (W_1  - W_2) \ast \rho_2 \|_{L^2([0,T] \times \T^d)}^2.
$$
The result now follows using the differential form of Gronwall's inequality \cite[Sec. B.2]{evans}.
\end{proof}

The second part of the following lemma shows that the map $W \mapsto \rho_W$ is $\beta$-smoothing, suggesting a mildly ill-posed inverse problem. We have not attempted to optimise $\beta$ here but believe it close to being sharp.

\begin{lemma}\label{lem:forward_lip}
Let $W_i \in W^{2,\infty} (\T^d)$, $\phi \in H^{3+d}(\T^d) \cap \cP(\T^d)$ and consider the corresponding solutions $\rho_{W_i}$ to \eqref{PDE2}. Suppose $\|W_i\|_{W^{2,\infty}} \leq M$ for $i=1,2$.
\begin{itemize}
\item[(i)] Then for some constant $C = C(d,T,M,\|\phi\|_{H^{d}})$,
\begin{align*}
\|\rho_{W_1} - \rho_{W_2}\|_{L^2(\mX, \lambda)}^2 \leq C \int_0^T \|\nabla (W_1-W_2) \ast \rho_{W_2}(t)\|_{L^2(\T^d)}^2 dt.
\end{align*}
\item[(ii)]
If in addition $\phi \in H^\beta(\T^d)$, where $\beta\geq 3+d$ is an even integer, then
\begin{align}\label{eq:metric}
\|\rho_{W_1} - \rho_{W_2}\|_{L^2(\mX, \lambda)} \leq C \|W_1-W_2\|_{H^{-\beta}}
\end{align}
for some constant $C(d,\beta,T,M,\|\phi\|_{H^\beta}).$
\end{itemize}
\end{lemma}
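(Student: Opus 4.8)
The plan is to derive part (i) from the relative-entropy stability bound of Lemma \ref{lem:stability_rel_ent} by a ``reverse Pinsker'' step converting relative entropy into a squared $L^2(\T^d)$-distance, and then to obtain part (ii) from part (i) via a Fourier estimate on the convolution $\nabla(W_1-W_2)\ast\rho_{W_2}$ together with the spatial regularity of $\rho_{W_2}$ supplied by Theorem \ref{thm:PDE_lin}. (One could alternatively prove (i) by a direct $L^2$ energy estimate on $\rho_{W_1}-\rho_{W_2}$, but since the relative-entropy machinery of Lemmas \ref{lem:bogachev}--\ref{lem:stability_rel_ent} is already in place, it is natural to use it.)

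For part (i), Lemma \ref{lem:stability_rel_ent} gives, for each $t\in[0,T]$,
\[
\int_0^t H(\rho_{W_1}(u)\,|\,\rho_{W_2}(u))\,du \;\le\; C\int_0^t \|\nabla(W_1-W_2)\ast\rho_{W_2}(s)\|_{L^2(\T^d)}^2\,ds .
\]
To pass from relative entropy to $\|\rho_{W_1}-\rho_{W_2}\|_{L^2}^2$, I would use the elementary pointwise bound $a\log(a/b)-a+b \ge (a-b)^2/(2\max(a,b))$ for $a\ge 0,\ b>0$ (a second-order Taylor expansion of $a\mapsto a\log(a/b)$ at $a=b$, where $\partial_a^2 = 1/a$). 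Integrating over $x\in\T^d$, the linear terms vanish because both densities integrate to one, leaving $H(\rho_{W_1}(t)\,|\,\rho_{W_2}(t))$ on the left, while on the right $\max(\rho_{W_1},\rho_{W_2})\le \|\rho_{W_1}\|_{L^\infty(\mX)}\vee\|\rho_{W_2}\|_{L^\infty(\mX)}\le C_0(d,T,M,\|\phi\|_{H^d})$ by \eqref{eq:sup_norm} (and the strict positivity and well-posedness from Theorem \ref{thm:MV_well_posed}). Thus $H(\rho_{W_1}(t)\,|\,\rho_{W_2}(t)) \ge (2C_0)^{-1}\|\rho_{W_1}(t)-\rho_{W_2}(t)\|_{L^2(\T^d)}^2$, and substituting this into the previous display with $t=T$ (absorbing the normalising factor from $\lambda$ into $C$) gives (i).

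For part (ii), write $h=W_1-W_2$ and use $\nabla h\ast\rho = \nabla(h\ast\rho)$ and Plancherel: for fixed $t$,
\[
\|\nabla(W_1-W_2)\ast\rho_{W_2}(t)\|_{L^2(\T^d)}^2 \le \|h\ast\rho_{W_2}(t)\|_{H^1(\T^d)}^2 \simeq \sum_{k\in\Z^d}(1+|k|^2)\,|\hat h(k)|^2\,|\hat\rho_{W_2}(t,k)|^2 .
\]
Factoring $(1+|k|^2)=(1+|k|^2)^{-\beta}(1+|k|^2)^{\beta+1}$ and bounding $\sup_{k}(1+|k|^2)^{\beta+1}|\hat\rho_{W_2}(t,k)|^2$ by the full sum $\|\rho_{W_2}(t,\cdot)\|_{H^{\beta+1}(\T^d)}^2$ yields $\|\nabla(W_1-W_2)\ast\rho_{W_2}(t)\|_{L^2}^2 \lesssim \|W_1-W_2\|_{H^{-\beta}}^2\,\|\rho_{W_2}(t,\cdot)\|_{H^{\beta+1}}^2$. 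Integrating in $t$ and inserting into part (i),
\[
\|\rho_{W_1}-\rho_{W_2}\|_{L^2(\mX,\lambda)}^2 \lesssim \|W_1-W_2\|_{H^{-\beta}}^2\;\|\rho_{W_2}\|_{L^2([0,T];H^{\beta+1})}^2 ,
\]
and it remains to bound $\|\rho_{W_2}\|_{L^2([0,T];H^{\beta+1})}$ by a constant $C(d,\beta,T,M,\|\phi\|_{H^\beta})$ uniformly over $\|W_2\|_{W^{2,\infty}}\le M$. This is precisely the spatial parabolic-regularity estimate of Theorem \ref{thm:PDE_lin} for $\phi\in H^\beta$ (here $\beta$ even matters); taking square roots then gives \eqref{eq:metric}.

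The main obstacle is therefore not in the present lemma but in what it imports: one needs Theorem \ref{thm:PDE_lin} to deliver $L^2([0,T];H^{\beta+1})$ control (equivalently, $L^\infty_t H^\beta$ plus the parabolic half-derivative gain) of the nonlinear Fokker--Planck solution $\rho_{W_2}$, with the constant depending on $W_2$ only through $\|W_2\|_{W^{2,\infty}}$ --- this is where the even-$\beta$ restriction and the positivity of $\rho$ enter, through iterated applications of $\Delta$ in the energy estimates. The two ingredients added on top of it here, the reverse-Pinsker inequality and the Fourier computation, are routine.
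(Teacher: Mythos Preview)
Your proposal is correct and follows essentially the same route as the paper. For (i), the paper bounds $\|p-q\|_{L^2}^2\le 4\max(\|p\|_\infty,\|q\|_\infty)\,h^2(p,q)\le 4\max(\|p\|_\infty,\|q\|_\infty)\,H(p|q)$ via the Hellinger distance, whereas you obtain the equivalent reverse-Pinsker bound directly from the Taylor expansion of $a\mapsto a\log(a/b)$; for (ii), the paper derives a pointwise bound on $|\nabla(W_1-W_2)\ast\rho_{W_2}(t,x)|$ via Cauchy--Schwarz in Fourier space, while you take the $L^2$ norm and apply Plancherel---both arrive at the same inequality $\|\nabla(W_1-W_2)\ast\rho_{W_2}(t)\|_{L^2}^2\lesssim\|W_1-W_2\|_{H^{-\beta}}^2\|\rho_{W_2}(t)\|_{H^{\beta+1}}^2$ and then invoke the $L^2([0,T];H^{\beta+1})$ estimate \eqref{e:regularity_space} from Theorem~\ref{thm:PDE_lin}. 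One small remark: the spatial regularity \eqref{e:regularity_space} used here does not actually require $\beta$ to be even---that restriction in Theorem~\ref{thm:PDE_lin} is for the time-regularity estimate \eqref{e:regularity_time}, which is not needed in the present lemma.
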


\begin{proof}
(i) For any densities $p,q$, recall that the Hellinger distance $h(p,q) = \int (\sqrt{p}-\sqrt{q})^2$ satisfies $h^2(p,q) \leq H(p|q)$  (e.g. \cite[Lemma B.1]{GvdV17}). Moreover, writing $(p-q)^2 = (\sqrt{p}+\sqrt{q})^2(\sqrt{p}-\sqrt{q})^2$ yields $\|p-q\|_{L^2}^2 \leq 4\max (\|p\|_\infty,\|q\|_\infty)h^2(p,q)$. Using these facts and that $\rho_{W_i}(t)$ are probability densities for all $t\geq 0$,
\begin{align*}
\frac{1}{T} \int_0^T \|\rho_{W_1}(t,\cdot) - \rho_{W_2}(t,\cdot)\|_{L^2(\T^d)}^2 dt \leq \frac{4}{T} \max_{i=1,2} \|\rho_{W_i}\|_{L^\infty(\mX)} \int_0^T H(\rho_{W_1}(t,\cdot)|\rho_{W_2}(t,\cdot)) dt. 
\end{align*}
Using \eqref{e:sup_bound} and Lemma \ref{lem:stability_rel_ent} then gives the result.

(ii) Let $e_m(x) = e^{2\pi i m \cdot x}$, $m \in \Z^d$, denote the usual trigonometric polynomials. Since $\langle f(x-\cdot),e_m \rangle_{L^2(\T^d)} = - e_{-m}(x) \langle f,e_{-m} \rangle_{L^2(\T^d)}$, Parseval's theorem yields
\begin{align*}
\left| \partial_{x_i} (W_1-W_2) \ast \rho_{W_2}(t,\cdot)(x) \right|^2 
& \leq 4\pi^2 \left( \sum_{m\in\Z^d} |m_i| |\langle W_1-W_2,e_m \rangle_{L^2}| |\langle \rho_{W_2}(t,\cdot),e_m \rangle_{L^2}| \right)^2 \\
& \leq 4\pi^2 \sum_{m\in\Z^d} |m_i|^2 (1+|m|^2)^{-1-\beta} |\langle W_1-W_2,e_m \rangle_{L^2}|^2 \\
& \qquad \times \sum_{m\in\Z^d}  (1+|m|^2)^{1+\beta} |\langle \rho_{W_2}(t,\cdot),e_m \rangle_{L^2}|^2
\end{align*}
for any $x\in \T^d$. This implies that
\begin{align*}
|\nabla (W_1-W_2) \ast \rho_{W_2}(t,\cdot) (x)|^2 \leq 4\pi^2 \|\rho_{W_2}(t,\cdot)\|_{H^{1+\beta}}^2 \|W_1-W_2\|_{H^{-\beta}}^2.
\end{align*}
By the regularity estimate \eqref{e:regularity_space} in Theorem \ref{thm:PDE_lin}, $\|\rho_{W_2}\|_{L^2([0,T];H^{\beta+1}(\T^d))} \leq C(d,\beta,T,M,\|\phi\|_{H^\beta})<\infty$. Integrating the second last display in space and time, and substituting this into the conclusion of (i) then implies
\begin{align*}
\|\rho_{W_1}-\rho_{W_2}\|_{L^2(\mX)}^2 & \leq C \int_0^T \|\rho_{W_2}(t,\cdot)\|_{H^{\beta+1}}^2 dt ~ \|W_1-W_2\|_{H^{-\beta}}^2  \leq C' \|W_1-W_2\|_{H^{-\beta}}^2.
\end{align*}
\end{proof}

The estimate given in Lemma \ref{lem:forward_lip}(ii) is uniform over $\|\cdot\|_{W^{2,\infty}}$-balls for $W$, which verifies the Lipschitz estimate (2.4) in \cite[Condition 2.1.1]{Nickl_2023} with $\kappa = \beta$. We may thus apply \cite[Theorem 2.2.2]{Nickl_2023}, which completes the proof of Theorem \ref{thm:GP_forward}.

\subsection{Proof of Theorems \ref{stabest} and \ref{thm:deconvolution}: stability estimate and deconvolution}

\begin{proof}[Proof of Theorem \ref{stabest}]
Let $\L_{W,n}\psi :=\Delta \psi  + \nabla \cdot (\psi\nabla W \ast \rho_{n-1}),$ for smooth $\psi$, denote the operator defined in the linearized McKean-Vlasov equation \eqref{e:lin_MV}, $\partial \rho_n/\partial t =\L_{W,n}\rho_{n}$ with initial condition $\phi$. We will prove the result for the linearized solutions before passing to the limit $n\to\infty$. Further define the linear parabolic operator
$\mathscr P_{W,n}=(\partial/\partial t - \mathscr L_{W,n})$, which satisfies $\mathscr P_{W,n} \rho_{W,n}=0$ where $\rho_{W,n}=\rho_n$ is a solution to \eqref{e:lin_MV}, see Theorem \ref{thm:MV_well_posed}.

\smallskip

\textit{Step 1: Reduction to deconvolution with the particle density}.
Let $\rho_W, \rho_{W_0}$ be two solutions of (\ref{PDE2}) for distinct $W,W_0$, and denote by $\rho_{W,n},\rho_{W_0,n}$ their $n$-approximated linearised solutions. As the $\mathscr P_{W,n}$ operators are linear, we can write
\begin{align}\label{pseu0}
\mathscr P_{W,n} (\rho_{W,n}- \rho_{W_0,n}) &=0 - \frac{\partial}{\partial t} \rho_{W_0,n} + \mathscr L_{W,n} \rho_{W_0,n} \notag  \\
&=  (\mathscr L_{W,n}-\mathscr L_{W_0,n})\rho_{W_0,n} \notag \\
&= \nabla \cdot (\rho_{W_0,n} \nabla (W \ast \rho_{W,n-1} - W_0 \ast \rho_{W_0,n-1})).
\end{align}
The densities $\rho_{W_0,n}$ are strictly positive throughout $[0,T] \times \T^d$ in view of (\ref{harnlb}). Hence by standard theory for periodic elliptic equations, the operator $L_{0,n,t}\psi:=\nabla \cdot (\rho_{W_0,n}(t) \nabla \psi)$ is invertible on $L^2_0(\T^d):=L^2(\T^d) \cap \{\int_{\T^d} f =0\}$ for every $t$: if we select the integral zero solution $u = L_{0,n,t}^{-1}h$ to the elliptic equation $L_{0,n,t } u = h$, we have a uniform in $n, t$ elliptic regularity estimate (proved just as on~\cite[p.127-128]{Nickl_2023}),
$$\|L_{0,n,t}^{-1}h\|_{L^2} \le c \|h\|_{H^{-1}},~~ h \in L^2_0(\T^d), $$
with $c$ depending only on the (universal) constant in the Poincar\'e inequality on $\T^d$ as well as the lower bound $\phi_{min}$ from (\ref{harnlb}). We also notice that for any $t$,
\begin{align*}
\int_{\T^d} \mathscr P_{W,n} (\rho_{W,n}- \rho_{W_0,n} ) = \int_{\T^d} (\L_{W, n}- \L_{W_0,n}) \rho_{W_0,n} =0
\end{align*}
as both $\L_{W_n}$ and $\L_{W_0,n}$ are in divergence form. Therefore we can apply the inverse of $L_{0,n,t}$ to (\ref{pseu0}) and obtain
$$W \ast \rho_{W,n-1} - W_0 \ast \rho_{W_0,n-1} = L_{0,n,t}^{-1}[\mathscr P_{W,n} (\rho_{W,n}- \rho_{W_0,n})], ~~\text{ on } [0,T] \times \T^d,$$
where we note that the left hand side integrates to zero since $\int W = \int W_0, \int \rho_{W_0, n-1}=\int\rho_{W, n-1} =1$. Rearranging, we obtain the identity
\begin{equation*}
(W-W_0) \ast \rho_{W_0,n-1} = L_{0,n,t}^{-1}[\mathscr P_{W,n} (\rho_{W,n}- \rho_{W_0,n})] + W \ast (\rho_{W_0,n-1} - \rho_{W,n-1}) 
\end{equation*}
on $[0,T] \times \T^d$. We can now fix any time horizon $0<t_0 \le T$ and estimate
\begin{align}\label{nlinbd}
& \int_0^{t_0} \|(W-W_0) \ast \rho_{W_0,n-1}(t)\|^2_{L^2(\T^d)}dt \notag \\
& \lesssim \int_0^{t_0} \|L_{0,n,t}^{-1}[\mathscr P_{W,n} (\rho_{W,n}- \rho_{W_0,n})(t)]\|_{L^2(\T^d)}^2dt \notag \\
& \qquad + \int_0^{t_0} \|W \ast (\rho_{W_0,n-1} - \rho_{W,n-1})(t)\|_{L^2(\T^d)}^2 dt \notag \\
&\lesssim \int_0^{t_0} \|(\partial/\partial t - \L_{W,n}) (\rho_{W,n}- \rho_{W_0,n})(t)\|_{H^{-1}(\T^d)}^2dt \notag \\
& \qquad  + \int_0^{t_0} \|\rho_{W_0,n-1} - \rho_{W,n-1}(t)\|_{L^2(\T^d)}^2 dt \notag \\
&\lesssim \|\rho_{W,n}- \rho_{W_0,n}\|^2_{H^1([0,t_0], H^{-1})} + \|\rho_{W,n}- \rho_{W_0,n}\|^2_{L^2([0,t_0], H^1)},
\end{align}
where we have used the triangle inequality and the imbedding $H^1 \subset L^2$. We can take limits in this inequality: this is clear for the left hand side by Lemma \ref{fpconv} and since convolution with $W-W_0$ is uniformly Lipschitz on $L^2(\T^d)$. For the right hand side of (\ref{nlinbd}) we notice that by Theorem \ref{thm:PDE_lin} with $\beta=4$ and Rellich's compactness theorem (p.305 in \cite{folland_1}), the sequences $\{\rho_{W,n} - \rho_{W_0,n}: n \in \mathbb N\}$ are relatively compact and hence converge in $H^1([0,t_0], H^{-1})$ and $L^2([0,t_0], H^1)$ along a subsequence to their unique limit, which in view of Lemma \ref{fpconv} equals $\rho_{W} - \rho_{W_0}$. As the constants implied by $\lesssim$ in (\ref{nlinbd}) are $n$-independent, we deduce for any $0<t_0\le T$ that
\begin{equation}\label{nlinbdfin}
\begin{split}
& \int_0^{t_0} \|(W-W_0) \ast \rho_{W_0}(t)\|^2_{L^2(\T^d)}dt \\
& \qquad \quad \le C\big( \|\rho_{W}- \rho_{W_0}\|^2_{H^1([0,t_0], H^{-1})} + \|\rho_{W}- \rho_{W_0}\|^2_{L^2([0,t_0], H^1)}\big)
\end{split}
\end{equation}
for a constant $C=C(T,d, M, \phi_{min}, \|\phi\|_{H^\beta})>0$.

\smallskip

\textit{Step 2. Deconvolving at small times.}
Let $W_{0,K} = \sum_{k\in\Z^d: |k| \leq K} \langle W_0 , e_k\rangle_{L^2}  e_k$ denote the Fourier projection of $W_0\in H^{\alpha+1}$ onto the first $K$ frequencies. Expanding the $L^2$-norm in terms of the Fourier coefficients using Parseval's theorem and since $\int W = \int W_0$, we have 
\begin{equation}\label{trivdec}
\begin{split}
\|W - W_0\|_{L^2}^2 &\le \|W_0\|_{H^{\alpha+1}}^2 K^{-2(\alpha+1)} + \|W-W_{0,K}\|^2_{L^2}  \\
 &\lesssim K^{-2\alpha-2} + \sum_{0<|k| \le K} |\hat W(k)-\hat W_0(k)|^2 \frac{|\hat \rho_{W_0,n}(t,k)|^2}{|\hat \rho_{W_0,n}(t, k)|^2} \\
 &\lesssim K^{-2\alpha-2}+\sup_{0<|k|\le K} \frac{1}{|\hat \rho_{W_0,n}(t, k)|^2} \|(W-W_0) \ast \rho_{W_0,n}(t, \cdot)\|_{L^2}^2 
\end{split}
\end{equation}
For $t>0$, define the stability constant
\begin{equation}\label{illness}
\iota_{K, n}(t) \equiv \sup_{0<|k|\le K} \frac{1}{|\hat \rho_{W_0,n}(t, k)|^2},
\end{equation}
which we now upper bound. By the second part of Theorem \ref{thm:PDE_lin} we know $$\big\|\frac{d\rho_{W_0,n}}{dt}\big\|_{L^\infty([0,T];H^{\beta})} \leq C(d,T,\beta,M,\|\phi\|_{H^\beta}),$$ and hence $t \mapsto \rho_{W_0,n}(t)$ is Lipschitz as a map from $[0,T] \to L^1(\T^d)$ which implies $\|\rho_{W_0,n}(t) - \phi\|_{L^1(\T^d)} \leq Ct$. But since $L^1$-norms bound the Fourier coefficients uniformly, for any $k\in \Z^d$,
$$|\hat \rho_{W_0,n}(k)| \ge |\hat \phi(k)| - |\hat \rho_{W_0,n}(k) - \hat \phi(k)| \ge c|k|^{-\zeta} - Ct.$$
Thus for any $0<t\leq t_0 = \tfrac{c}{2C} K^{-\zeta}$, the stability constant in \eqref{illness} satisfies $\iota_{K,n}(t) \leq \frac{c^2}{4} K^{2\zeta}$.

Integrating the inequality \eqref{trivdec} in time then gives
$$ \int_0^{t_0} \|W - W_0\|_{L^2}^2 dt \lesssim t_0 K^{-2\alpha-2} + K^{2\zeta} \int_0^{t_0}\|(W-W_0) \ast \rho_{W_0,n}(t, \cdot)\|_{L^2}^2dt$$
and hence by (\ref{nlinbd}),
\begin{equation*}
\|W - W_0\|_{L^2}^2 \lesssim K^{-2\alpha-2} + t_0^{-1} K^{2\zeta}(\|\rho_{W}- \rho_{W_0}\|^2_{H^1([0,t_0], H^{-1})} + \|\rho_{W}- \rho_{W_0}\|^2_{L^2([0,t_0], H^1)}),
\end{equation*}
completing the proof.
\end{proof}

\begin{proof}[Proof of Theorem \ref{thm:deconvolution}]
The proof follows from combining the contraction rate for the forward map from Example \ref{ex:Fourier_series} with the stability estimate from Theorem \ref{stabest}. By Example \ref{ex:Fourier_series}, the posterior concentrates on the set
$$\overline{\mathcal{W}}_N = \{W\in E_{K_N}:  \|\rho_W-\rho_{W_0}\|_{L^2(\mX)} \le L\delta_N , ~ \|W\|_{W^{2,\infty}} \leq M\},$$
where $\delta_N = N^{-\frac{\alpha+1+\beta}{2(\alpha+1)+2\beta+d}}$, so it suffices to prove the result for $W\in \overline{\mathcal{W}}_N$. Using the interpolation inequality~[Eqn. (A.5)]\cite{Nickl_2023}, $\|u\|_{H^1(\T^d)} \lesssim \|u\|_{L^2(\T^d)}^{1-1/\gamma}\|u\|_{H^\gamma(\T^d)}^{1/\gamma}$ for any $\gamma>1$. By \eqref{e:regularity_space} of Theorem \ref{thm:PDE_lin}, $\|\rho_W\|_{L^2([0,T];H^{\beta+1})} \leq C$ for any $W\in \overline{\mathcal{W}}_N$ as well as $W_0 \in H^{\alpha+1} \subset \mathcal W^{2,\infty}$ via the Sobolev imbedding, and hence applying the interpolation inequality with $\gamma=\beta+1$ and using H\"older's inequality yields
\begin{align*}
\|\rho_W - \rho_{W_0}\|_{L^2([0,T];H^1)}^2 & \lesssim \left( \int_0^T \|\rho_W(t) - \rho_{W_0}(t)\|_{L^2(\T^d)}^2 dt \right)^{\frac{\gamma-1}{\gamma}}  = \|\rho_W - \rho_{W_0}\|_{L^2(\mX)}^{\frac{2\beta}{\beta+1}}.
\end{align*}
Turning to the time regularity, using~\cite[Ch.~4, Prop. 2.1]{Lions_vol2}, we have the interpolation inequality $$\|v\|_{H^1([0,T];L^2)} \lesssim \|v\|_{L^2([0,T];L^2)}^{(m-1)/m} \|v\|_{H^m([0,T];L^2)}^{1/m}$$
for any integer $m\geq 1$. But by \eqref{e:regularity_time} of Theorem \ref{thm:PDE_lin}, $\|\frac{d^\ell\rho_W}{dt^\ell}\|_{L^2([0,T];H^1)} \leq C$ for all $\ell =0,1,\dots,\beta/2$ and hence we may take $m=\beta/2$ to obtain
\begin{align*}
\|\rho_W - \rho_{W_0}\|_{H^1([0,T];H^{-1})} \lesssim \|\rho_W - \rho_{W_0}\|_{L^2([0,T];L^2)}^\frac{m-1}{m} = \|\rho_W - \rho_{W_0}\|_{L^2(\mX)}^\frac{\beta-2}{\beta}.
\end{align*}
Apply Theorem \ref{stabest} with $t_0 \simeq K_N^{-\zeta}$ gives that for any $W \in \overline{\mathcal{W}}_N \subset E_{K_N}$,
\begin{align*}
\|W-W_0\|_{L^2(\T^d)}^2 & \lesssim K_N^{-2\alpha-2} + K_N^{3\zeta} \left( \delta_N^{\frac{2(\beta-2)}{\beta}} + \delta_N^\frac{2\beta}{\beta+1} \right) \lesssim  K_N^{-2\alpha-2} + K_N^{3\zeta} \delta_N^{\frac{2(\beta-2)}{\beta}}.
\end{align*}
Substituting in $K_N \simeq N^{\frac{1}{2(\alpha+1)+2\beta+d}}$ and $\delta_N$ shows that the second term dominates for all $\alpha>0$ and $\beta \geq 3+d$ since $\zeta > \beta + d/2$. The convergence of the posterior mean vector now follows from arguments as in~\cite[Theorem 2.3.2]{Nickl_2023} and the details are left to the reader.
\end{proof}

\subsection{Regularity properties for the McKean-Vlasov PDE}

In this section, we collect properties and regularity estimates for the nonlinear parabolic McKean-Vlasov PDE \eqref{PDE2} driving the dynamics of our model, as well as its linearization. Recall that we work on the torus $\T^d = \R^d/\Z^d= (0,1]^d$ and that $\cP = \cP(\T^d)$ denotes the set of probability densities (with respect to Lebesgue measure) on $\T^d$. The basic existence and uniqueness of classical solutions to \eqref{PDE2} follows from~\cite{CGPS2020,Gvalani_thesis}; see also~\cite[Thm. 2]{guillin2023uniform}.

\begin{theorem}(\cite[Theorem 2.2]{CGPS2020})\label{thm:MV_well_posed}
If $W \in W^{2, \infty}(\T^d)$ and $\phi \in H^{3+d}(\T^d) \cap \cP(\T^d)$, then there exists a classical solution $\rho:[0,T]\times \T^d\to[0,\infty)$ such that $\rho (t, \cdot) \in \cP(\T^d) \cap C^2(\T^d)$ for all $t >0$.
\end{theorem}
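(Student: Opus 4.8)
The plan is a Banach fixed-point argument at the level of the associated \emph{linear} Fokker--Planck equation, followed by a parabolic regularity bootstrap and an application of the maximum principle; a self-contained alternative would be to first solve the McKean SDE \eqref{nlinsde} by Sznitman's fixed point in path space and read off $\rho$ as the density of $\mathrm{Law}(X_t)$, but the direct PDE route produces the claimed classical regularity more transparently.

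First, fix $\sigma \in C([0,T];\cP(\T^d))$ and freeze the drift $b_\sigma(t,\cdot)=\nabla W\ast\sigma(t,\cdot)$. Since $W\in W^{2,\infty}(\T^d)$, Young's convolution inequality gives $\|b_\sigma\|_{L^\infty(\mX)}\le\|\nabla W\|_{L^\infty}$ and, rewriting \eqref{PDE2} in non-divergence form $\partial_t\rho=\Delta\rho+(\nabla W\ast\sigma)\cdot\nabla\rho+(\Delta W\ast\sigma)\rho$, the zero-order coefficient satisfies $\|\Delta W\ast\sigma\|_{L^\infty(\mX)}\le\|\Delta W\|_{L^\infty}$; crucially both bounds are independent of $\sigma$. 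Classical linear parabolic theory (energy estimates with Galerkin approximation, or the Schauder/$L^p$ theory for second-order parabolic equations) then yields a unique solution $\rho=\Phi(\sigma)$ with $\rho(0,\cdot)=\phi$; integrating the equation over $\T^d$ shows $\int_{\T^d}\rho(t,\cdot)=\int_{\T^d}\phi=1$, while the weak minimum principle gives $\rho\ge 0$, so $\Phi$ maps $C([0,T];\cP(\T^d))$ into itself. For $\rho_i=\Phi(\sigma_i)$, subtract the two equations, test against $\rho_1-\rho_2$ (or run an entropy/Gronwall estimate along the lines of Lemma \ref{lem:bogachev}), and use $\|b_{\sigma_1}-b_{\sigma_2}\|_{L^\infty}\le\|\nabla W\|_{L^\infty}\|\sigma_1-\sigma_2\|_{L^1}$ together with the uniform coefficient bounds to obtain $\sup_{t\le\tau}\|\rho_1-\rho_2\|_{L^1}\le C\tau\sup_{t\le\tau}\|\sigma_1-\sigma_2\|_{L^1}$. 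Taking $\tau$ small makes $\Phi$ a contraction, and since $C$ does not degenerate (the drift is bounded independently of $\sigma$) one iterates over $[0,\tau],[\tau,2\tau],\dots$ to reach $[0,T]$; the resulting fixed point $\rho=\Phi(\rho)$ is the unique weak solution of \eqref{PDE2}, which also gives the uniqueness claimed in the theorem.

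Next, bootstrap regularity. With $\rho$ now fixed, the coefficients $\nabla W\ast\rho$ and $\Delta W\ast\rho$ are at least as smooth in space as $\rho$ itself, since spatial derivatives can be moved onto the $W$-factor of the convolution; differentiating the equation in space and running $H^s$ energy estimates therefore propagates $\phi\in H^{3+d}(\T^d)$ to $\rho\in C([0,T];H^{3+d})\cap L^2([0,T];H^{4+d})$ with $\partial_t\rho$ one derivative lower -- this is precisely Theorem \ref{thm:PDE_lin} specialised to the diagonal $W_1=W_2=W$, $\rho_{n-1}=\rho$. The Sobolev embedding $H^{3+d}(\T^d)\hookrightarrow C^2(\T^d)$ (valid since $3+d>2+d/2$) then shows $\rho(t,\cdot)\in C^2(\T^d)$ and makes $\rho$ a classical solution. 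Finally, since the equation is now linear with bounded smooth coefficients and $\phi\ge 0$ is not identically zero, the strong maximum principle (equivalently the parabolic Harnack inequality) upgrades $\rho\ge0$ to $\rho(t,x)>0$ for all $t>0$, $x\in\T^d$; combined with the conserved unit mass, $\rho(t,\cdot)\in\cP(\T^d)\cap C^2(\T^d)$ for every $t>0$.

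The main obstacle is the regularity bootstrap: the coefficients of the frozen linear problem are only as regular as the unknown, so the $H^s$-energy estimates must be organised to avoid circularity -- one commutes spatial derivatives past the convolutions and absorbs the transport term $(\nabla W\ast\rho)\cdot\nabla\rho$ and the zero-order term $(\Delta W\ast\rho)\rho$ using interpolation and the uniform drift bounds (alternatively, first derive H\"older continuity of $\rho$ from De Giorgi--Nash--Moser and then invoke Schauder theory). The feature that makes the conclusion hold globally on $[0,T]$, rather than only for short time, is that $\|\nabla W\ast\rho\|_{L^\infty}\le\|\nabla W\|_{L^\infty}$ no matter what $\rho$ is, ruling out any a priori blow-up of the coefficients as $t$ increases.
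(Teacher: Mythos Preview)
The paper does not supply its own proof here---the result is quoted from \cite[Theorem 2.2]{CGPS2020}---but the Picard iteration \eqref{e:lin_MV} together with Lemma \ref{fpconv} and the regularity estimates of Theorem \ref{thm:PDE_lin} are exactly the scheme you describe, with the Banach fixed point unrolled as an explicit iterative sequence. Your sketch is correct and matches that approach.
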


Similar to \cite{sznitman1991, Chazelle_al2017b,CGPS2020}, we approximate the McKean-Vlasov equation \eqref{PDE2} by a sequence of linear parabolic PDEs. For $n=1,2,\dots$, consider the sequence of linear equations on $\T^d$
\begin{align}\label{e:lin_MV}
\frac{\partial \rho_n}{\partial t} &=  \Delta \rho_n + \nabla \cdot (\rho_n \nabla W \ast \rho_{n-1})
=: \mathscr L_{W,n} \rho_n, \\
\rho_n(0,\cdot) & = \phi \notag,
\end{align}
where $\phi \in H^{\beta}(\T^d) \cap \cP(\T^d)$ and we take a smooth time-independent initialization $\rho_0 \in C^{\infty}((\T^d)) \cap \cP(\T^d)$. One then shows as in \cite{sznitman1991, Chazelle_al2017b,CGPS2020}, see specifically \cite[Sec. 3.2.2]{Gvalani_thesis}:

\begin{lemma} \label{fpconv}
Let $W \in W^{2, \infty}(\T^d)$ and $\phi \in H^{3+d}(\T^d) \cap \cP(\T^d)$. As $k \to \infty$ we have (if necessary along a subsequence) $\rho_{k} \to \rho$ in $L^2([0,T]; H^1(\T^d))$ as well as in $H^1([0,T]; H^{-1}(\T^d))$. 
\end{lemma}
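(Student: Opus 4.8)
The plan is to treat the scheme \eqref{e:lin_MV} as a Picard-type iteration, show that the increments $w_n := \rho_n - \rho_{n-1}$ decay at a factorial rate on all of $[0,T]$, and then upgrade the resulting $C([0,T];L^2(\T^d))$ convergence to the two asserted topologies by interpolating against the uniform-in-$n$ regularity bounds of Theorem \ref{thm:PDE_lin}. A preliminary observation that makes everything work is precisely that $n$-uniformity: the drift coefficient of the linear equation at step $n$ is $b_n := \nabla W \ast \rho_{n-1}$, and since $\rho_{n-1}$ is a probability density, $\|b_n\|_{L^\infty([0,T];W^{1,\infty}(\T^d))} \le \|W\|_{W^{2,\infty}(\T^d)}$ for every $n$ with no induction needed; hence Theorem \ref{thm:PDE_lin} (and the Sobolev argument \eqref{eq:sup_norm}) applies to each $\rho_n$ with constants depending only on $d,T,\|W\|_{W^{2,\infty}},\|\phi\|_{H^{3+d}}$. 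In particular $\sup_n \|\rho_n\|_{L^\infty(\mX)} < \infty$, $\sup_n \|\rho_n\|_{L^2([0,T];H^s(\T^d))} < \infty$ for some $s>1$ (from \eqref{e:regularity_space}, using $\beta = 3+d$), and $\sup_n \|\partial_t \rho_n\|_{L^2([0,T];H^1(\T^d))} < \infty$ (from \eqref{e:regularity_time}).

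The increment solves $\partial_t w_n = \Delta w_n + \nabla\cdot(w_n\, b_n) + \nabla\cdot(\rho_{n-1}\, \nabla W \ast w_{n-1})$ with $w_n(0,\cdot) = 0$ for $n\ge 2$. Testing against $w_n$ in $L^2(\T^d)$, integrating by parts, absorbing both gradient cross-terms into $\|\nabla w_n\|_{L^2}^2$ via Young's inequality, and using $\|b_n\|_\infty \le \|W\|_{W^{2,\infty}}$, the uniform $L^\infty$-bound on $\rho_{n-1}$, and $\|\nabla W \ast w_{n-1}\|_{L^2} \le \|\nabla W\|_{L^1}\|w_{n-1}\|_{L^2}$, one obtains $\tfrac{d}{dt}\|w_n(t)\|_{L^2}^2 \le C\|w_n(t)\|_{L^2}^2 + C\|w_{n-1}(t)\|_{L^2}^2$ with $C = C(d,T,\|W\|_{W^{2,\infty}},\|\phi\|_{H^{3+d}})$. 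Gronwall's inequality \cite[Sec.~B.2]{evans} gives $\|w_n(t)\|_{L^2}^2 \le Ce^{CT}\int_0^t \|w_{n-1}(s)\|_{L^2}^2\, ds$, and iterating this bound through the nested time integrals in the classical Picard fashion yields $\sup_{[0,T]}\|w_n\|_{L^2}^2 \lesssim A^n/n!$ for some $A>0$. Hence $\sum_n \sup_{[0,T]}\|w_n\|_{L^2} < \infty$, so $\{\rho_n\}$ is Cauchy and converges in $C([0,T];L^2(\T^d))$, in particular in $L^2([0,T];L^2(\T^d))$, to some limit $\rho_\infty$; crucially $\rho_{n-1}$ has the \emph{same} limit.

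To reach $L^2([0,T];H^1(\T^d))$, combine $\rho_n - \rho_m \to 0$ in $L^2([0,T];L^2)$ with the uniform bound in $L^2([0,T];H^s)$, $s>1$, via the interpolation inequality $\|u\|_{L^2([0,T];H^1)} \lesssim \|u\|_{L^2([0,T];L^2)}^{1-1/s}\|u\|_{L^2([0,T];H^s)}^{1/s}$ (as in \cite[Eqn.~(A.5)]{Nickl_2023}), giving $\rho_n \to \rho_\infty$ in $L^2([0,T];H^1(\T^d))$. Feeding this back into the equation for $\rho_n - \rho_m$, using $\|\nabla\cdot(\cdot)\|_{L^2([0,T];H^{-1})} \le \|\cdot\|_{L^2([0,T];L^2)}$ and the splitting $\rho_n b_n - \rho_m b_m = (\rho_n - \rho_m)b_n + \rho_m\,\nabla W \ast(\rho_{n-1} - \rho_{m-1})$, shows $\partial_t(\rho_n - \rho_m) \to 0$ in $L^2([0,T];H^{-1}(\T^d))$, whence $\rho_n \to \rho_\infty$ in $H^1([0,T];H^{-1}(\T^d))$. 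Finally, passing to the limit in \eqref{e:lin_MV} in the sense of distributions is legitimate because $\rho_n b_n \to \rho_\infty\,(\nabla W \ast \rho_\infty)$ in $L^2([0,T];L^2)$ (the first factor converges strongly in $L^2$, the second in $L^\infty(\mX)$ since $\|\nabla W \ast(\rho_{n-1} - \rho_\infty)\|_\infty \le \|\nabla W\|_\infty\|\rho_{n-1} - \rho_\infty\|_{L^1}$, and all factors are uniformly bounded); thus $\rho_\infty$ solves \eqref{PDE2} with initial datum $\phi$, and by uniqueness for \eqref{PDE2} — which follows from Lemma \ref{lem:bogachev} applied with $W_1 = W_2 = W$ together with Pinsker's inequality and Gronwall, cf. Theorem \ref{thm:MV_well_posed} — we conclude $\rho_\infty = \rho$.

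The technical heart is the increment estimate together with the $n$-uniformity recorded above: because the nonlinearity couples $\rho_n$ with $\rho_{n-1}$, compactness of $\{\rho_n\}$ in the relevant spaces (which also follows from the uniform bounds via the Aubin--Lions--Simon lemma, an alternative route matching the ``if necessary along a subsequence'' hedge in the statement) is \emph{not} by itself sufficient — one must know $\rho_n - \rho_{n-1} \to 0$ both to pass to the limit in the product term and to identify the limit unambiguously, which is exactly what the Picard contraction delivers, and which is why convergence in fact holds for the full sequence. A secondary point that should not be skipped is the weak--strong uniqueness step pinning $\rho_\infty$ to the classical solution $\rho$ of \eqref{PDE2}.
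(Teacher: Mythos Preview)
Your proof is correct. The paper itself gives no argument here, deferring entirely to \cite{sznitman1991, Chazelle_al2017b, CGPS2020} and \cite[Sec.~3.2.2]{Gvalani_thesis}, so there is nothing detailed to compare against; your Picard-iteration route is in the spirit of those references and in fact yields convergence of the \emph{full} sequence in both topologies, making the ``if necessary along a subsequence'' hedge in the statement unnecessary. Your closing remark is apt: a bare compactness argument (Aubin--Lions or Rellich applied to the uniform bounds of Theorem~\ref{thm:PDE_lin}, as the paper itself invokes later in the proof of Theorem~\ref{stabest}) produces only subsequential limits and would still require the increment control $\rho_n - \rho_{n-1} \to 0$ in order to pass to the limit in the coupled product term $\rho_n\,\nabla W \ast \rho_{n-1}$ and identify the limit as a solution of \eqref{PDE2}, so the Picard contraction is the essential ingredient either way. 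One small clean-up: the uniform bound $\sup_n\|\partial_t\rho_n\|_{L^2([0,T];H^1)}$ you record in the first paragraph via \eqref{e:regularity_time} is never actually used downstream (and would require even $\beta$), so you may drop it.
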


We now establish quantitative regularity estimates which hold uniformly over potentials $W$ of bounded norm, which was crucial in our proofs above. It builds on the results of \cite{Chazelle_al2017b,Gvalani_thesis}. We remark that the smoothness of $\rho_W$ is driven by the regularity of the initial condition $\phi$, while the regularity of $W$ is less crucial as it factors into the estimates only after convolution with $\rho_{n-1}$, which is a smoothing operation.

\begin{theorem}\label{thm:PDE_lin}
Let $\beta \ge 3+d$ be an integer and suppose $W \in W^{2,\infty}(\T^d)$ satisfies $\|W\|_{W^{2,\infty}} \le M$. Let further $\phi \in H^\beta(\T^d) \cap \cP(\T^d)$ be a strictly positive probability density such that $\inf_{x}\phi(x) \ge \phi_{min}>0$ and let $\rho_0 \in C^{\infty}(\T^d) \cap \cP(\T^d)$ be strictly positive. Then for every $n \in \mathbb{N}$, the PDE \eqref{e:lin_MV} has a unique solution $\rho_n \in C^{1,2}([0,T] \times \T^d)$, which is strictly positive
\begin{equation}\label{harnlb}
\inf_{x \in \T^d, t \in [0,T]}\rho_{n}(t,x) \ge \lambda (\phi_{min}, M,T)>0,
\end{equation}
for a constant $\lambda$ that depends only on $\phi_{min},M,T$, and satisfies $\int_{\T^d} \rho_n(t,x) \, dx =1$ for all $t\in[0,T]$. For $k=0,1,\dots,\beta-1$, we have the following regularity estimate in space:
\begin{equation}\label{e:regularity_space}
\|\rho_n \|_{L^2([0,T] ; H^{k+2}(\T^d))} + \| \rho_n \|_{L^{\infty}([0,T] ; H^{k+1}(\T^d))} \leq C(d,k,T,M,\|\phi\|_{H^{k+1}}).
\end{equation}
Furthermore, if $\beta = 2j$ is an even integer, we have time regularity
$$
 \frac{d^\ell \rho_n}{dt^\ell} \in L^2 ([0,T]; H^{2j-2\ell+1}(\T^d)) \cap L^{\infty} ([0,T] ; H^{2j-2\ell}(\T^d)) \quad \mbox{for} \; 0 \leq \ell\leq j,
$$
with the estimate
\begin{equation}\label{e:regularity_time}
\begin{split}
& \sum_{\ell=0}^{j} \left( \left\|\frac{d^\ell \rho_n}{d t^\ell} \right\|_{L^2([0,T] ; H^{2j - 2\ell +1}(\T^d))} + \left\| \frac{d^\ell \rho_n}{d t^\ell} \right\|_{L^{\infty}([0,T] ; H^{2j -2\ell}(\T^d))} \right) \\
& \qquad \qquad \leq C(d,j,T,M,\|\phi\|_{H^{2j}}, \|\rho_0\|_{H^{2j}}).
\end{split}
\end{equation}


Moreover, all the above estimates hold with $\rho_n$ replaced by $\rho$, the solution to the McKean-Vlasov equation \eqref{PDE2}.
\end{theorem}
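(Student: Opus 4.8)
The plan is to establish the four assertions in the order stated --- classical well-posedness together with mass conservation, the lower bound \eqref{harnlb}, the spatial estimates \eqref{e:regularity_space}, the time estimates \eqref{e:regularity_time} --- first for the linearized solutions $\rho_n$, and then to pass to the limit $n\to\infty$. The structural fact underlying everything is that the drift $b_n:=\nabla W\ast\rho_{n-1}$ can place at most two derivatives on $W$ (as $W\in W^{2,\infty}$), so that by Young's convolution inequality $\|b_n(t)\|_{W^{1,\infty}}\lesssim M$ uniformly, using only $\|\rho_{n-1}(t)\|_{L^1}=1$, while $\|b_n(t)\|_{H^s}\lesssim M\|\rho_{n-1}(t)\|_{H^{s-1}}$ for $s\ge 2$ (the remaining $s-1$ derivatives landing on $\rho_{n-1}$). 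This smoothing is exactly what keeps all constants independent of $n$ and makes the regularity of $\phi$, not of $W$, the governing quantity.

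For well-posedness and \eqref{harnlb} one argues inductively in $n$. Given $\rho_{n-1}\in C^{1,2}$, the field $b_n$ is smooth in space (convolution of the Lipschitz field $\nabla W$ with a $C^2$ density) and $C^1$ in time, and $\phi\in H^\beta\hookrightarrow C^{2,\eta}(\T^d)$ for some $\eta>0$ since $\beta\ge 3+d$; hence \eqref{e:lin_MV} is a uniformly parabolic linear divergence-form equation with bounded H\"older coefficients, and standard Schauder theory (as in \cite{CGPS2020, Gvalani_thesis}) gives a unique classical solution $\rho_n\in C^{1,2}([0,T]\times\T^d)$, with $\int_{\T^d}\rho_n(t)=1$ by integrating the equation over $\T^d$. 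Rewriting \eqref{e:lin_MV} as $\partial_t\rho_n=\Delta\rho_n+b_n\cdot\nabla\rho_n+(\nabla\cdot b_n)\rho_n$ with $\|\nabla\cdot b_n\|_{L^\infty}=\|\Delta W\ast\rho_{n-1}\|_{L^\infty}\le\|\Delta W\|_{L^\infty}\le M$, the parabolic minimum principle gives that $m(t):=\min_x\rho_n(t,x)$ satisfies $m'(t)\ge -M\,m(t)$, whence $m(t)\ge\phi_{min}e^{-MT}=:\lambda(\phi_{min},M,T)$, uniformly in $n$.

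For \eqref{e:regularity_space} I would induct on $k$ via energy estimates: testing \eqref{e:lin_MV} against $\rho_n$ in $H^{k+1}(\T^d)$ and integrating the Laplacian term by parts gives, on the torus, $\tfrac12\tfrac{d}{dt}\|\rho_n\|_{H^{k+1}}^2\le -\tfrac12\|\rho_n\|_{H^{k+2}}^2+C\|\rho_n\|_{H^{k+1}}^2+C\|\rho_n b_n\|_{H^{k+1}}^2$. Expanding $\rho_n b_n$ by the Leibniz rule and distributing derivatives with Young's inequality and Sobolev/Gagliardo--Nirenberg interpolation, every summand other than $M\|\rho_n\|_{H^{k+1}}$ can be arranged to involve only the strictly lower-index quantities $\|\rho_n\|_{H^k}$ and $\|b_n\|_{H^{k+1}}\lesssim M\|\rho_{n-1}\|_{H^k}$; Gronwall's lemma then yields $\sup_{t\le T}\|\rho_n(t)\|_{H^{k+1}}^2+\int_0^T\|\rho_n(t)\|_{H^{k+2}}^2\,dt\le C(\|\phi\|_{H^{k+1}}^2+\|\rho_{n-1}\|_{L^2([0,T];H^k)}^2)$, and $\|\rho_{n-1}\|_{L^2([0,T];H^k)}$ is controlled by the induction hypothesis at index $k-1$ (at $k=0$ the coupling term needs only $\|\rho_{n-1}\|_{L^1}=1$), uniformly in $n$, which closes the induction. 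For \eqref{e:regularity_time} with $\beta=2j$, differentiating \eqref{e:lin_MV} in $t$ shows that $w_\ell:=\partial_t^\ell\rho_n$ solves the parabolic equation $\partial_t w_\ell=\mathscr L_{W,n}w_\ell+g_\ell$ with $g_\ell=\nabla\cdot\sum_{m<\ell}\binom{\ell}{m}w_m(\nabla W\ast\partial_t^{\ell-m}\rho_{n-1})$ --- a source built from strictly lower time derivatives of $\rho_n$ and from time derivatives of $\rho_{n-1}$ (controlled by the $n$-induction) --- and with initial data $w_\ell(0)$ obtained recursively from the PDE, involving $\phi$ through at most $2\ell\le 2j$ spatial derivatives and, for $n=1$, also $\rho_0$, which is why $\|\rho_0\|_{H^{2j}}$ enters the constant. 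Running the spatial energy estimate for each $w_\ell$, inductively in $\ell$, with every time derivative costing two spatial derivatives, gives $w_\ell\in L^\infty([0,T];H^{2j-2\ell})\cap L^2([0,T];H^{2j-2\ell+1})$ with $n$-uniform constants. Finally, by Lemma \ref{fpconv} $\rho_n\to\rho$ along a subsequence in $L^2([0,T];H^1)\cap H^1([0,T];H^{-1})$; combined with the uniform bounds, a further subsequence converges weakly in $L^2([0,T];H^{k+2})$, weak-$\ast$ in $L^\infty([0,T];H^{k+1})$, and analogously for the time derivatives, necessarily to $\rho$, so $\rho$ inherits all the estimates by weak(-$\ast$) lower semicontinuity of norms, while positivity and mass conservation pass to the limit likewise.

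The main obstacle is making the energy-estimate induction in the third step close with constants that are genuinely independent of $n$ and that depend on $\phi$ only through the expected norms, which is delicate precisely because $b_n$ couples $\rho_n$ to $\rho_{n-1}$. The resolution rests entirely on the two-derivative smoothing of $b_n$: the coupling term always sits at one lower Sobolev level, and after the time integration inherent in Gronwall's lemma it is handled by the estimate established at the previous step of the $k$-induction. Carrying out this bookkeeping precisely --- tracking which norm of $\rho_{n-1}$ (or of its time derivatives) is needed where, verifying it is available at the correct induction level, and unwinding the recursive structure of the initial data $w_\ell(0)$ --- is where essentially all the work lies; the remaining ingredients (Schauder theory, the parabolic minimum principle, weak compactness) are standard.
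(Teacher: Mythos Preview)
Your proposal is correct and follows essentially the paper's approach: existence via standard linear parabolic theory, spatial estimates \eqref{e:regularity_space} by energy methods with an induction on $k$ (the paper tests against $\Delta\partial_\nu\rho_n$, you test in $H^{k+1}$, which is equivalent), time estimates \eqref{e:regularity_time} by differentiating in $t$ and inducting on $\ell$, and passage to the McKean--Vlasov solution via weak(-$\ast$) compactness and lower semicontinuity. Your identification of the key structural fact --- that the coupling through $b_n=\nabla W\ast\rho_{n-1}$ always sits one Sobolev level below --- is exactly what the paper exploits (through its Lemma~\ref{lem:convolution}) to make the induction close with $n$-uniform constants.

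The one genuine difference is the lower bound \eqref{harnlb}. The paper proves it by a stochastic representation: it introduces the time-reversed SDE $dY_t=\nabla W\ast\rho_{T'-t}(Y_t)\,dt+\sqrt 2\,dB_t$, applies It\^o's formula to $\rho_{T'-t}(Y_t)$, and uses Gronwall on the resulting integral inequality to obtain $\rho_{T'}(x)\ge \phi_{min}/(1+MTe^{MT})$. Your route via the parabolic comparison/minimum principle --- rewriting the equation in non-divergence form with zeroth-order coefficient $\Delta W\ast\rho_{n-1}$ bounded by $M$ and comparing $\rho_n$ with the subsolution $\phi_{min}e^{-Mt}$ --- is more elementary, avoids stochastic calculus entirely, and in fact yields the slightly sharper constant $\lambda=\phi_{min}e^{-MT}$. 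Both arguments are valid; the paper's choice reflects its reliance on the same SDE machinery used elsewhere (e.g.\ Lemma~\ref{lem:bogachev}), while yours is self-contained within PDE theory.
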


\begin{proof}
The existence of a unique solution $\rho_n$ to \eqref{e:lin_MV} that is a probability density follows from standard linear parabolic PDE theory, e.g.~\cite[Ch. 7]{evans}. The basic regularity in time and space follow from~\cite[Theorems 27.2-27.3]{Wloka_1987} (time) and~\cite[Theorem 27.5]{Wloka_1987} (space), see also~\cite[Thm. 2, Thm. 3]{Milani_1999}. In particular, since $\rho_0$ is smooth, for $n=1$  we have a linear parabolic PDE (written in non-divergence form) with smooth coefficients $\nabla W \ast \rho_0 \in C^{\infty}$ and $\Delta W \ast \rho_0 \in C^{\infty}$. Furthermore, the compatibility conditions are automatically satisfied since we have periodic boundary conditions. Therefore, from~\cite[Thm. 2]{Milani_1999} it follows that $\rho_1 \in L^2 ([0,T]; H^{\beta+1}(\T^d)) \cap H^{\frac{\beta +1}{2}} ([0,T] ; L^2(\T^d))$. From this we deduce that we have the same space/time regularity as $\rho_1$ for $\nabla W \ast \rho_{n-1}$ and of $\Delta W \ast \rho_{n-1}$, since $W \in W^{2, \infty}$. Applying again~\cite[Thm. 2]{Milani_1999} we conclude the existence and uniqueness of solutions $\rho_n \in L^2 ([0,T]; H^{\beta+1}(\T^d)) \cap H^{\frac{\beta +1}{2}} ([0,T] ; L^2(\T^d))$ to~\eqref{e:lin_MV}, which, using \cite{Milani_1999}, Equation 1.17, see also \cite{evans} Section 7.1, Theorem 6, justifies taking any derivatives we need in what follows. It remains to show the quantitative regularity estimates, which we show hold uniformly in $n$ and over potentials $\|W\|_{W^{2,\infty}} \leq M$.


We start with space regularity and follow ideas in \cite{Chazelle_al2017b,Gvalani_thesis}. We prove~\eqref{e:regularity_space} by induction on $k$, starting with $k=0$. We multiply \eqref{e:lin_MV} by $\rho_n$, integrate in $x$, do the standard integration by parts, use H\"{o}lder and Young inequalities to deduce
\begin{align*}
\frac{1}{2}\frac{d}{d t} \| \rho_n \|^2_{L^2} + \|\nabla \rho_n \|^2_{L^2} &= \langle \rho_n, \frac{d}{dt} \rho_n - \Delta\rho_n \rangle_{L^2}\\
& = - \langle \nabla \rho_n, \rho_n \nabla W_{n-1} \rangle_{L^2} \\
& \leq \frac{1}{2} \|\nabla \rho_n\|_{L^2}^2 + \frac{1}{2} \|\rho_n\|_{L^2}^2\|\nabla W_{n-1}\|_{L^2}^2,
\end{align*}
where we have used the notation $W_{n-1} := W \ast \rho_{n-1}$. By Young's convolution inequality, $\|\nabla W_{n-1}\|_{L^2} \leq \|\nabla W\|_{L^2} \|\rho_{n-1}\|_{L^1} \leq M$, from which we deduce
\begin{equation}\label{e:diff_inequality}
\frac{1}{2}\frac{d}{d t} \| \rho_n \|^2_{L^2} + \frac{1}{2}\|\nabla \rho_n \|^2_{L^2} \leq \frac{M^2}{2}\|\rho_n\|_{L^2}^2
\end{equation}
for all $t\in[0,T]$, where we recall $\rho_n = \rho_n(t,\cdot)$. Using the differential form of Gronwall's inequality~\cite[Sec. B.2]{evans}, we have
\begin{equation}\label{e:L2_estim}
\| \rho_n(t,\cdot) \|^2_{L^2} \leq e^{M^2T} \|\rho(0,\cdot) \|^2_{L^2} = e^{M^2T} \|\phi\|_{L^2}^2
\end{equation}
for all $t \in [0,T]$. Furthermore, integrating~\eqref{e:diff_inequality} in time and using~\eqref{e:L2_estim},
\begin{equation}\label{e:H1_estim}
\sup_{t \in [0,T]} \|\rho_n(t,\cdot) \|^2_{L^2} + \int_0^T \| \nabla \rho_n(t,\cdot) \|^2_{L^2} \, dt \leq (M^2T e^{M^2T}+1) \|\phi\|^2_{L^2}.
\end{equation}
We proceed in a similar manner with the higher regularity estimates. For the $H^2$-in-space estimate, we multiply \eqref{e:lin_MV} by $\Delta \rho_n$, integrate by parts, use H\"{o}lder's and Young's inequalities 
to deduce that
\begin{eqnarray*}
\frac{1}{2} \frac{d}{d t} \|\nabla \rho_n \|_{L^2}^2 + \| \Delta \rho_n \|^2_{L^2} & = &  -\int_{\T^d} \nabla \cdot \left(\rho_n \nabla W_{n-1} \right) \Delta \rho_n \, dx 
\\ & \leq & \frac{1}{2} \|\Delta \rho_{n} \|^2_{L^2} + \frac{1}{2} \int_{\T^d} \Big| \nabla \cdot \big( \rho_n \nabla W_{n-1} \big) \Big|^2 \, dx
\\ & \leq & \frac{1}{2} \|\Delta \rho_{n} \|^2_{L^2} +  \|\nabla \rho_{n} \cdot\nabla W_{n-1}\|_{L^2}^2 +  \| \rho_n \Delta W_{n-1}  \|_{L^2}^2
\\ & \leq & \frac{1}{2} \|\Delta \rho_{n} \|^2_{L^2} +  \|\nabla W_{n-1}\|_{L^\infty}^2 \|\nabla \rho_{n} \|^2_{L^2} + \|\Delta W_{n-1}\|_{L^\infty}^2 \| \rho_n \|_{L^2}^2.
\end{eqnarray*}
Using \eqref{e:diff_inequality}-\eqref{e:L2_estim} and the fact that $\| \rho_{n-1} \|_{L^1} =1$, we similarly obtain 
$\tfrac{d}{dt}\|\nabla \rho_n \|_{L^2}^2 + \| \Delta \rho_n \|^2_{L^2} \leq C(M,T)\|\phi\|_{L^2}^2.$ Integrating in time and using \eqref{e:L2_estim} and \eqref{e:H1_estim} we get
\begin{equation}\label{e:H2_estim}
\sup_{t \in [0,T]} \|\nabla \rho_n \|^2_{L^2} + c \int_0^T \| \rho_n \|^2_{H^2} \, dt \leq C(T,M)\|\phi\|_{H^1}^2.
\end{equation}
Together,~\eqref{e:H1_estim} and~\eqref{e:H2_estim} establish  \eqref{e:regularity_space} for $k=0$.

For the higher regularity estimates \eqref{e:regularity_space} with integers $1\leq k \leq \beta-1$, we proceed by induction on $k$ as in \cite{Chazelle_al2017b,Gvalani_thesis}. Assume that \eqref{e:regularity_space} holds for $k-1$. We differentiate~\eqref{e:lin_MV} by $\partial_{\nu}$ for some multi-index $\nu$ with $|\nu| = k$, multiply by $\Delta \partial_{\nu} \rho_n$, integrate by parts and use Young's inequality to obtain
\begin{equation}\label{e:Hk-norm}
\begin{split}
\frac{1}{2}\frac{d}{d t} \|\nabla \partial_{\nu} \rho_n \|^2_{L^2} +  \|\Delta \partial_{\nu} \rho_n \|^2_{L^2} & \leq \Big| \int_{\T^d} \Delta \partial_{\nu} \rho_n 
\nabla \cdot (\rho_n \nabla W_{n-1} )  \, dx \Big| \\
& \leq  \frac{1}{2} \|\Delta \partial_{\nu}
 \rho_{n} \|^2_{L^2} + C_d \sum_{i=1}^d  \| \rho_n \big( \partial_{x_i} W \ast \rho_{n-1} \big)  \|^2_{H^{k+1}}.
\end{split}
\end{equation}
But by Lemma \ref{lem:convolution},
$$
\| \rho_n \big( \partial_{x_i} W \ast \rho_{n-1} \big) \|^2_{H^{k+1}} \leq C(d,k,M) \| \rho_n \|_{H^{k+1}} \|\rho_{n-1} \|_{H^k}. 
$$
Substituting this estimate into~\eqref{e:Hk-norm}, summing over all $\nu = |k|$, integrating over $[0,T]$ and using the induction hypothesis,
\begin{align*}
\sup_{t \in [0,T]} \| \rho_n(t) \|^2_{H^{k+1}} + \int_0^T \| \rho_n(t) \|^2_{H^{k+2}}dt &  \leq C(d,k,M) \int_0^T \|\rho_n(t)\|_{H^{k+1}}^2 \|\rho_{n-1}(t)\|_{H^k} dt  \\
& \qquad + \|\phi\|_{H^{k+1}}^2 \\
& \leq C(d,k,M,T,\|\phi\|_{H^{k+1}}, \|\rho_0\|_{H^{k+1}}),
\end{align*}
which establishes \eqref{e:regularity_space} for $k$ as desired. Now since the $\rho_n$ are uniformly bounded in the relevant Hilbert space norms, the Banach-Alaoglu theorem implies that they converge weakly in these Hilbert spaces to their limits along a subsequence. Combined with the inequality $\|h\|_{H} \le \liminf_n \|h_n\|_{H}$ for any weakly convergent sequence $h_n \to h$ in a Hilbert space, we may take limits in the last inequalities, to give the following apriori estimate for the solution to the McKean-Vlasov PDE
\begin{equation}\label{e:Hk-estim-mckean}
\|\rho \|_{L^2([0,T] ; H^{k+2}(\T^d)}) + \|\rho \|_{L^{\infty}([0,T] ; H^{k+1}(\T^d))} \leq C(d,k,M,T,\|\phi\|_{H^{k+1}}, \|\rho_0\|_{H^{k+1}}).
\end{equation}
We now consider regularity in time. For $\beta =2 j$, we have that $\rho_n \in L^2 ([0,T]; H^{2j+1}(\T^d)) \cap H^{j+\frac{1}{2}} ([0,T] ; L^2(\T^d))$ with $\partial^{j-1}_t \rho_n \in C([0,T] ; H^1(\T^d))$, see~\cite[Thm. 2]{Milani_1999}.
To get the quantitative estimates, we follow the proof of~\cite[Theorem 4.3]{Chazelle_al2017b}, proceeding by induction on the number of time-derivatives $\ell$. 
For $\ell=0$, this follows immediately from \eqref{e:regularity_space}. Suppose now we have the time regularity estimate~\eqref{e:regularity_time} for some integer $0 \leq \ell < j$. We differentiate \eqref{e:lin_MV} $\ell$-times in $t$ to obtain, using the notation $\rho^{(\ell)}:=\frac{d^{\ell} \rho}{d t^{\ell}}$,
\begin{eqnarray*}
\rho_n^{(\ell+1)} &=& \Delta \rho_n^{(\ell)} + \nabla \cdot \left(\rho_n \nabla W \ast \rho_{n-1} \right)^{(\ell)}
\\ & = & \Delta \rho_n^{(\ell)} + \sum_{k=0}^{\ell} \binom{\ell}{k} \nabla \cdot \left(\rho_n^{(k)} \nabla W \ast \rho_{n-1}^{(\ell-k)} \right).
\end{eqnarray*}
Taking the $H^{2j - 2 \ell -1}(\T^d)$-norm in space and using Lemma \ref{lem:convolution} since $2j - 2\ell \geq 2$,
\begin{eqnarray*}
\| \rho_n^{(\ell+1)} \|^2_{H^{2j - 2 \ell -1}(\T^d)} & \leq & 2 \| \rho_n^{(\ell)} \|^2_{H^{2j - 2 \ell +1}(\T^d)} + C_\ell \sum_{k=0}^{\ell} \|\rho_n^{(k)} \nabla W \ast \rho_{n-1}^{(\ell-k)}  \|^2_{H^{2j - 2 \ell}(\T^d)}
 \\ & \leq & 
                 2 \| \rho_n^{(\ell)} \|^2_{H^{2j - 2 \ell +1}(\T^d)} + C_{j,M} \sum_{k=0}^{\ell} \|\rho_n^{(k)} \|^2_{H^{2j - 2 \ell}(\T^d)} \| \rho_{n-1}^{(\ell-k)}  \|^2_{H^{2j - 2\ell-1}(\T^d)},
\end{eqnarray*}
where $C$ depends only on $j$ and $\|W\|_{W^{2,\infty}(\T^d)} \leq M$. Integrating over time $t \in [0,T]$ then gives
\begin{eqnarray*}
\| \rho_n^{(\ell+1)} \|^2_{L^2([0,T] ; H^{2j - 2 \ell -1}(\T^d))} & \leq & 2 \| \rho_n^{(\ell)} \|^2_{L^2([0,T] ; H^{2j - 2 \ell +1}(\T^d))}
 \\ & & 
                  + C_{j,M} \sum_{k=0}^{\ell} \|\rho_n^{(k)} \|^2_{L^2([0,T] ; H^{2j - 2 \ell}(\T^d))} \| \rho_{n-1}^{(\ell-k)}  \|^2_{L^\infty([0,T] ; H^{2j - 2 \ell-1}(\T^d))}.
\end{eqnarray*}
Since $0 \leq k,\ell-k \leq \ell$, we may now apply the inductive hypothesis \eqref{e:regularity_time} to conclude that
$$
\| \rho_n^{(\ell+1)} \|^2_{L^2(0,T ; H^{2j- 2\ell -1}(\T^d))}  \leq  C(d,j,T,\|W\|_{W^{2,\infty}},\|\phi\|_{H^{2j}},\|\rho_0\|_{H^{2j}}),
$$
which is the first part of the required estimate for $\ell+1$. Similarly, taking the $H^{2j-2\ell-2}$-norm in space and arguing as above yields
$$
\| \rho_n^{(\ell+1)} \|^2_{L^{\infty}(0,T ; H^{2j - 2 \ell -2}(\T^d))}  \leq  C(d,j,T,\|W\|_{W^{2,\infty}},\|\phi\|_{H^{2j}},\|\rho_0\|_{H^{2j}}),
$$
which proves the desired estimate for $\ell+1$. This proves \eqref{e:regularity_time} for $\ell = 0,\dots,j$ as required, based on the smoothness of the initial condition $\phi \in H^{2j}$. As before, we can then pass to the limit $n \to \infty$ to obtain the estimates for the mean field PDE.


To obtain the lower bound \eqref{harnlb} to the solution to the McKean-Vlasov PDE, we follow the proof of~\cite[Lem. 5.1]{Lacker_LeFlem_2023} that is based on the argument presented in~\cite[Thm. 2]{guillin2023uniform}. Fix $T'\in(0,T]$, let $\rho_t(x)= \rho(t,x)$ denote the solution to the McKean-Vlasov PDE \eqref{PDE2},  and consider the unique strong solution of the 
SDE
\begin{equation*}
d Y_t = \nabla W \ast \rho_{T'-t} (Y_t) \, dt + \sqrt{2} \, dB_t, \quad Y_0 =x.
\end{equation*}
Note that we regard here $\rho_t(x)$ as a given function and not as the law of the process $Y_t$. The generator of $Y_t$ is $\mathcal{L} = \nabla W \ast \rho_{T'-t} (x) \cdot \nabla + \Delta.$  Using It\^{o}'s formula, the PDE \eqref{PDE2} and taking expectations gives for $t\in[0,T']$, as in~\cite[Eqn. 5.4]{Lacker_LeFlem_2023}
\begin{equation}\label{e:expect_harnack}
\E^x \rho_{T'-t} (Y_t) = \rho_{T'}(x) - \E^x \int_0^t \rho_{T'-s} (Y_s) \Delta W \ast \rho_{T'-s} (Y_s) \, ds,
\end{equation}
where $\E^x$ denotes the expectation with respect to the law of the process $Y_t$ starting at $x$. We use now the fact that $\rho_u(x) \geq 0, \, u \in [0,T]$, together with the estimate $\|\Delta W \ast \rho_u \|_{L^{\infty}} \leq \|W\|_{W^{2,\infty}} \|\rho_u\|_{L^1} \leq M$ to deduce that
$$
\E^x \rho_{T'-t} (Y_t) \leq \rho_{T'}(x) + M \int_0^t \E^x \rho_{T'-s} (Y_s)  \, ds.
$$
From the integral form of Gronwall's inequality, it follows that for all $t\in[0,T']$,
$$
\E^x \rho_{T'-t}(Y_t) \leq \rho_{T'}(x) \left(1 + M t e^{M t} \right) \leq \rho_{T'}(x) \left( 1+MTe^{MT} \right).
$$
Setting now $t=T'$ gives that for any $T' \in (0,T]$,
$$
\rho_{T'}(x) \geq \frac{\phi_{min}}{1 + MT e^{MT}},
$$
as required. The same argument also applies to the sequence of PDEs~\eqref{e:lin_MV}. We omit the details.
\end{proof}

\begin{lemma}\label{lem:convolution}
Let $W \in W^{2,\infty}(\T^d)$, $f\in H^k$ and $g \in H^{k-1}$ for integer $k\geq 2$. Then
\begin{equation}\label{e:estim_conv_2}
\|f (\nabla W *g) \|_{H^k} \leq C(k,d,\|W\|_{W^{2,\infty}}) \|f \|_{H^k} \|g \|_{H^{k-1}}. 
\end{equation}
\end{lemma}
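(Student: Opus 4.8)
The plan is to exploit that, although the Sobolev space $H^k(\T^d)$ is a Banach algebra only for $k>d/2$, the second factor $\nabla W\ast g$ is much smoother than $g$ itself. Indeed, convolution against $\nabla W\in W^{1,\infty}(\T^d)$ upgrades the $H^{k-1}(\T^d)$-regularity of $g$ to $W^{k,\infty}(\T^d)$-regularity of $\nabla W\ast g$, and once one factor lies in $W^{k,\infty}$ the product estimate follows from the Leibniz rule and crude Hölder bounds, with no restriction linking $k$ and $d$. So I would split the argument into: (i) an auxiliary bound $\|\nabla W\ast g\|_{W^{k,\infty}}\lesssim \|W\|_{W^{2,\infty}}\|g\|_{H^{k-1}}$, and (ii) the Leibniz estimate for $f\cdot h$ with $h:=\nabla W\ast g$.

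For step (i), fix a multi-index $\alpha$ with $|\alpha|\le k$ and decompose $\alpha=\alpha'+\alpha''$ with $|\alpha''|=\min(|\alpha|,k-1)$, hence $|\alpha'|\le 1$. Using the standard rule for weak derivatives of convolutions one may move $\alpha''$ onto $g$ and $\alpha'$ onto $\nabla W$, namely $\partial^\alpha(\nabla W\ast g)=(\partial^{\alpha'}\nabla W)\ast(\partial^{\alpha''}g)$; this is legitimate because $g\in H^{k-1}$ with $|\alpha''|\le k-1$ and $W\in W^{2,\infty}$ with $|\alpha'|\le 1$. Since $|\alpha'|\le 1$ we have $\|\partial^{\alpha'}\nabla W\|_{L^\infty}\le \|W\|_{W^{2,\infty}}$, and since the torus carries a probability measure, $\|\partial^{\alpha''}g\|_{L^1}\le \|\partial^{\alpha''}g\|_{L^2}\le \|g\|_{H^{k-1}}$. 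Young's convolution inequality $\|u\ast v\|_{L^\infty}\le \|u\|_{L^\infty}\|v\|_{L^1}$ then gives $\|\partial^\alpha(\nabla W\ast g)\|_{L^\infty}\le \|W\|_{W^{2,\infty}}\|g\|_{H^{k-1}}$, and summing over $|\alpha|\le k$ yields
$$\|\nabla W\ast g\|_{W^{k,\infty}(\T^d)}\le C(k,d)\,\|W\|_{W^{2,\infty}(\T^d)}\,\|g\|_{H^{k-1}(\T^d)}.$$

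For step (ii), write $h=\nabla W\ast g$ and apply the Leibniz rule: for every $|\alpha|\le k$,
$$\|\partial^\alpha(fh)\|_{L^2}\le\sum_{\beta\le\alpha}\binom{\alpha}{\beta}\|\partial^\beta f\|_{L^2}\,\|\partial^{\alpha-\beta}h\|_{L^\infty}\le C(k,d)\,\|f\|_{H^k}\,\|h\|_{W^{k,\infty}},$$
since $|\beta|\le k$ and $|\alpha-\beta|\le k$. Summing over $|\alpha|\le k$ and substituting the bound from step (i) gives $\|f(\nabla W\ast g)\|_{H^k}\le C(k,d)\,\|W\|_{W^{2,\infty}}\,\|f\|_{H^k}\,\|g\|_{H^{k-1}}$, which is the claim.

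I do not expect a genuine obstacle here; the only point needing care is the bookkeeping in step (i) which guarantees that at most one derivative ever lands on $\nabla W$ (which possesses only one bounded weak derivative), all remaining derivatives being absorbed by $g$, whose $H^{k-1}$-regularity together with the compactness of $\T^d$ controls the requisite $L^1$-norms. This is precisely what allows one to replace the usual Sobolev-algebra hypothesis $k>d/2$ by the much weaker assumption $W\in W^{2,\infty}$.
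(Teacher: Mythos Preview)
Your proof is correct and follows essentially the same route as the paper: the paper's argument first bounds $\|f(\nabla W\ast g)\|_{H^k}$ by a constant times $\|f\|_{H^k}\|\nabla W\ast g\|_{C^k}$ (your step (ii)) and then invokes the periodic analogue of \cite[Lem.~4.3.18]{GN16} to control $\|\nabla W\ast g\|_{C^k}$ in terms of $\|W\|_{W^{2,\infty}}$ and $\|g\|_{H^{k-1}}$ (your step (i)). You have simply unpacked the cited convolution lemma by hand, using the same derivative-splitting and Young's inequality that underlie it.
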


\begin{proof}
We can estimate the term on the left-hand-side by a constant multiple of $\|f\|_{H^k} \|\nabla W \ast g\|_{C^k}$ and then use the periodic analogue of  \cite[Lemma 4.3.18]{GN16}.
\end{proof}

%
%

\begin{acks}[Acknowledgments]
The authors would like to thank the associate editor and two anonymous referees for their many helpful remarks and suggestions.
\end{acks}

\begin{funding}
RN was supported by an ERC Advanced Grant (Horizon Europe UKRI G116786) as well as by EPSRC programme grant EP/V026259. GP is partially supported by an ERC-EPSRC Frontier Research Guarantee through Grant No. EP/X038645, ERC Advanced Grant No. 247031 and a Leverhulme Trust Senior Research Fellowship, SRF$\backslash$R1$\backslash$241055.
\end{funding}



\bibliographystyle{imsart-number} 
\bibliography{mybib}       

\def\cprime{$'$} \def\cprime{$'$} \def\cprime{$'$} \def\cprime{$'$}
  \def\cprime{$'$} \def\cprime{$'$} \def\cprime{$'$} \def\cprime{$'$}
  \def\Rom#1{\uppercase\expandafter{\romannumeral #1}}\def\u#1{{\accent"15
  #1}}\def\Rom#1{\uppercase\expandafter{\romannumeral #1}}\def\u#1{{\accent"15
  #1}}\def\cprime{$'$} \def\cprime{$'$} \def\cprime{$'$} \def\cprime{$'$}
  \def\cprime{$'$} \def\cprime{$'$} \def\cprime{$'$}
  \def\polhk#1{\setbox0=\hbox{#1}{\ooalign{\hidewidth
  \lower1.5ex\hbox{`}\hidewidth\crcr\unhbox0}}} \def\cprime{$'$}
  \def\cprime{$'$} \def\cprime{$'$}
\begin{thebibliography}{67}

\bibitem{ADH21}
\begin{barticle}[author]
\bauthor{\bsnm{Agapiou},~\bfnm{S.}\binits{S.}},
  \bauthor{\bsnm{Dashti},~\bfnm{M.}\binits{M.}} \AND
  \bauthor{\bsnm{Helin},~\bfnm{T.}\binits{T.}}
(\byear{2021}).
\btitle{Rates of contraction of posterior distributions based on
  {$p$}-exponential priors}.
\bjournal{Bernoulli}
\bvolume{27}
\bpages{1616--1642}.
\bdoi{10.3150/20-bej1285}
\bmrnumber{4278794}
\end{barticle}
\endbibitem

\bibitem{AW24}
\begin{barticle}[author]
\bauthor{\bsnm{Agapiou},~\bfnm{S.}\binits{S.}} \AND
  \bauthor{\bsnm{Wang},~\bfnm{S.}\binits{S.}}
(\byear{2024}).
\btitle{Laplace priors and spatial inhomogeneity in {B}ayesian inverse
  problems}.
\bjournal{Bernoulli}
\bvolume{30}
\bpages{878--910}.
\bdoi{10.3150/22-bej1563}
\bmrnumber{4699538}
\end{barticle}
\endbibitem

\bibitem{amorino2024polynomial}
\begin{barticle}[author]
\bauthor{\bsnm{Amorino},~\bfnm{Chiara}\binits{C.}},
  \bauthor{\bsnm{Belomestny},~\bfnm{Denis}\binits{D.}},
  \bauthor{\bsnm{Pilipauskait{\. e}},~\bfnm{Vytaut{\. e}}\binits{V.}},
  \bauthor{\bsnm{Podolski},~\bfnm{Mark}\binits{M.}} \AND
  \bauthor{\bsnm{Zhou},~\bfnm{Shi-Yuan}\binits{S.-Y.}}
(\byear{2024}).
\btitle{Polynomial rates via deconvolution for nonparametric estimation in
  {McKean-Vlasov SDEs}}.
\bjournal{arXiv preprint arXiv:2401.04667}.
\end{barticle}
\endbibitem

\bibitem{Amorino_al_2022}
\begin{barticle}[author]
\bauthor{\bsnm{Amorino},~\bfnm{C.}\binits{C.}},
  \bauthor{\bsnm{Heidari},~\bfnm{A.}\binits{A.}},
  \bauthor{\bsnm{Pilipauskait\.~{e}},~\bfnm{V.}\binits{V.}} \AND
  \bauthor{\bsnm{Podolskij},~\bfnm{M.}\binits{M.}}
(\byear{2023}).
\btitle{Parameter estimation of discretely observed interacting particle
  systems}.
\bjournal{Stochastic Process. Appl.}
\bvolume{163}
\bpages{350--386}.
\bdoi{10.1016/j.spa.2023.06.011}
\bmrnumber{4612302}
\end{barticle}
\endbibitem

\bibitem{BGLFS17}
\begin{barticle}[author]
\bauthor{\bsnm{Beskos},~\bfnm{A.}\binits{A.}},
  \bauthor{\bsnm{Girolami},~\bfnm{M.}\binits{M.}},
  \bauthor{\bsnm{Lan},~\bfnm{S.}\binits{S.}},
  \bauthor{\bsnm{Farrell},~\bfnm{P.~E.}\binits{P.~E.}} \AND
  \bauthor{\bsnm{Stuart},~\bfnm{A.~M.}\binits{A.~M.}}
(\byear{2017}).
\btitle{Geometric {MCMC} for infinite-dimensional inverse problems}.
\bjournal{J. Comput. Phys.}
\bvolume{335}
\bpages{327--351}.
\end{barticle}
\endbibitem

\bibitem{Bishwal_2011}
\begin{barticle}[author]
\bauthor{\bsnm{Bishwal},~\bfnm{J.~P.~N.}\binits{J.~P.~N.}}
(\byear{2011}).
\btitle{Estimation in interacting diffusions: continuous and discrete
  sampling}.
\bjournal{Appl. Math. (Irvine)}
\bvolume{2}
\bpages{1154--1158}.
\bdoi{10.4236/am.2011.29160}
\bmrnumber{2924983}
\end{barticle}
\endbibitem

\bibitem{BRS_2016}
\begin{barticle}[author]
\bauthor{\bsnm{Bogachev},~\bfnm{V.~I.}\binits{V.~I.}},
  \bauthor{\bsnm{R\"ockner},~\bfnm{M.}\binits{M.}} \AND
  \bauthor{\bsnm{Shaposhnikov},~\bfnm{S.~V.}\binits{S.~V.}}
(\byear{2016}).
\btitle{Distances between transition probabilities of diffusions and
  applications to nonlinear {F}okker-{P}lanck-{K}olmogorov equations}.
\bjournal{J. Funct. Anal.}
\bvolume{271}
\bpages{1262--1300}.
\bdoi{10.1016/j.jfa.2016.05.016}
\bmrnumber{3522009}
\end{barticle}
\endbibitem

\bibitem{CCH15}
\begin{barticle}[author]
\bauthor{\bsnm{Carrillo},~\bfnm{J.~A.}\binits{J.~A.}},
  \bauthor{\bsnm{Chertock},~\bfnm{A.}\binits{A.}} \AND
  \bauthor{\bsnm{Huang},~\bfnm{Y.}\binits{Y.}}
(\byear{2015}).
\btitle{A finite-volume method for nonlinear nonlocal equations with a gradient
  flow structure}.
\bjournal{Commun. Comput. Phys.}
\bvolume{17}
\bpages{233--258}.
\bdoi{10.4208/cicp.160214.010814a}
\bmrnumber{3372289}
\end{barticle}
\endbibitem

\bibitem{CGPS2020}
\begin{barticle}[author]
\bauthor{\bsnm{Carrillo},~\bfnm{J.~A.}\binits{J.~A.}},
  \bauthor{\bsnm{Gvalani},~\bfnm{R.~S.}\binits{R.~S.}},
  \bauthor{\bsnm{Pavliotis},~\bfnm{G.~A.}\binits{G.~A.}} \AND
  \bauthor{\bsnm{Schlichting},~\bfnm{A.}\binits{A.}}
(\byear{2020}).
\btitle{Long-time behaviour and phase transitions for the {M}ckean-{V}lasov
  equation on the torus}.
\bjournal{Arch. Ration. Mech. Anal.}
\bvolume{235}
\bpages{635--690}.
\bdoi{10.1007/s00205-019-01430-4}
\bmrnumber{4062483}
\end{barticle}
\endbibitem

\bibitem{Chazelle_al2017b}
\begin{barticle}[author]
\bauthor{\bsnm{Chazelle},~\bfnm{B.}\binits{B.}},
  \bauthor{\bsnm{Jiu},~\bfnm{Q.}\binits{Q.}},
  \bauthor{\bsnm{Li},~\bfnm{Q.}\binits{Q.}} \AND
  \bauthor{\bsnm{Wang},~\bfnm{C.}\binits{C.}}
(\byear{2017}).
\btitle{Well-posedness of the limiting equation of a noisy consensus model in
  opinion dynamics}.
\bjournal{J. Differential Equations}
\bvolume{263}
\bpages{365--397}.
\bdoi{10.1016/j.jde.2017.02.036}
\bmrnumber{3631310}
\end{barticle}
\endbibitem

\bibitem{Comte_al_2023}
\begin{barticle}[author]
\bauthor{\bsnm{Comte},~\bfnm{F.}\binits{F.}} \AND
  \bauthor{\bsnm{Genon-Catalot},~\bfnm{V.}\binits{V.}}
(\byear{2023}).
\btitle{Nonparametric adaptive estimation for interacting particle systems}.
\bjournal{Scand. J. Stat.}
\bvolume{50}
\bpages{1716--1755}.
\bmrnumber{4677374}
\end{barticle}
\endbibitem

\bibitem{CRSW13}
\begin{barticle}[author]
\bauthor{\bsnm{Cotter},~\bfnm{S.~L.}\binits{S.~L.}},
  \bauthor{\bsnm{Roberts},~\bfnm{G.~O.}\binits{G.~O.}},
  \bauthor{\bsnm{Stuart},~\bfnm{A.~M.}\binits{A.~M.}} \AND
  \bauthor{\bsnm{White},~\bfnm{D.}\binits{D.}}
(\byear{2013}).
\btitle{M{CMC} methods for functions: modifying old algorithms to make them
  faster}.
\bjournal{Statist. Sci.}
\bvolume{28}
\bpages{424--446}.
\bdoi{10.1214/13-STS421}
\bmrnumber{3135540}
\end{barticle}
\endbibitem

\bibitem{Dawson1983}
\begin{barticle}[author]
\bauthor{\bsnm{Dawson},~\bfnm{D.~A.}\binits{D.~A.}}
(\byear{1983}).
\btitle{Critical dynamics and fluctuations for a mean-field model of
  cooperative behavior}.
\bjournal{J. Statist. Phys.}
\bvolume{31}
\bpages{29--85}.
\bdoi{10.1007/BF01010922}
\bmrnumber{711469}
\end{barticle}
\endbibitem

\bibitem{MGRGGP2020}
\begin{barticle}[author]
\bauthor{\bsnm{Delgadino},~\bfnm{M.~G.}\binits{M.~G.}},
  \bauthor{\bsnm{Gvalani},~\bfnm{R.~S.}\binits{R.~S.}} \AND
  \bauthor{\bsnm{Pavliotis},~\bfnm{G.~A.}\binits{G.~A.}}
(\byear{2021}).
\btitle{On the {D}iffusive-{M}ean {F}ield {L}imit for {W}eakly {I}nteracting
  {D}iffusions {E}xhibiting {P}hase {T}ransitions}.
\bjournal{Arch. Ration. Mech. Anal.}
\bvolume{241}
\bpages{91--148}.
\bdoi{10.1007/s00205-021-01648-1}
\bmrnumber{4271956}
\end{barticle}
\endbibitem

\bibitem{DGPS23}
\begin{barticle}[author]
\bauthor{\bsnm{Delgadino},~\bfnm{M.~G.}\binits{M.~G.}},
  \bauthor{\bsnm{Gvalani},~\bfnm{R.~S.}\binits{R.~S.}},
  \bauthor{\bsnm{Pavliotis},~\bfnm{G.~A.}\binits{G.~A.}} \AND
  \bauthor{\bsnm{Smith},~\bfnm{S.~A.}\binits{S.~A.}}
(\byear{2023}).
\btitle{Phase {T}ransitions, {L}ogarithmic {S}obolev {I}nequalities, and
  {U}niform-in-{T}ime {P}ropagation of {C}haos for {W}eakly {I}nteracting
  {D}iffusions}.
\bjournal{Comm. Math. Phys.}
\bvolume{401}
\bpages{275--323}.
\bdoi{10.1007/s00220-023-04659-z}
\bmrnumber{4604897}
\end{barticle}
\endbibitem

\bibitem{Hoffmann_al_2022}
\begin{barticle}[author]
\bauthor{\bsnm{Della~Maestra},~\bfnm{L.}\binits{L.}} \AND
  \bauthor{\bsnm{Hoffmann},~\bfnm{M.}\binits{M.}}
(\byear{2022}).
\btitle{Nonparametric estimation for interacting particle systems:
  {M}c{K}ean-{V}lasov models}.
\bjournal{Probab. Theory Related Fields}
\bvolume{182}
\bpages{551--613}.
\bdoi{10.1007/s00440-021-01044-6}
\bmrnumber{4367954}
\end{barticle}
\endbibitem

\bibitem{della2023lan}
\begin{barticle}[author]
\bauthor{\bsnm{Della~Maestra},~\bfnm{L.}\binits{L.}} \AND
  \bauthor{\bsnm{Hoffmann},~\bfnm{M.}\binits{M.}}
(\byear{2023}).
\btitle{The {LAN} property for {M}c{K}ean-{V}lasov models in a mean-field
  regime}.
\bjournal{Stochastic Process. Appl.}
\bvolume{155}
\bpages{109--146}.
\bdoi{10.1016/j.spa.2022.10.002}
\bmrnumber{4503434}
\end{barticle}
\endbibitem

\bibitem{evans}
\begin{bbook}[author]
\bauthor{\bsnm{Evans},~\bfnm{L.~C.}\binits{L.~C.}}
(\byear{1998}).
\btitle{Partial Differential Equations}.
\bpublisher{AMS}, \baddress{Providence, Rhode Island}.
\end{bbook}
\endbibitem

\bibitem{folland_1}
\begin{bbook}[author]
\bauthor{\bsnm{Folland},~\bfnm{G.~B.}\binits{G.~B.}}
(\byear{1999}).
\btitle{Real analysis}.
\bseries{Pure and Applied Mathematics}.
\bpublisher{John Wiley \& Sons Inc.}, \baddress{New York}.
\bmrnumber{2000c:00001}
\end{bbook}
\endbibitem

\bibitem{Fournier_Jourdain_2017}
\begin{barticle}[author]
\bauthor{\bsnm{Fournier},~\bfnm{N.}\binits{N.}} \AND
  \bauthor{\bsnm{Jourdain},~\bfnm{B.}\binits{B.}}
(\byear{2017}).
\btitle{Stochastic particle approximation of the {K}eller-{S}egel equation and
  two-dimensional generalization of {B}essel processes}.
\bjournal{Ann. Appl. Probab.}
\bvolume{27}
\bpages{2807--2861}.
\bdoi{10.1214/16-AAP1267}
\bmrnumber{3719947}
\end{barticle}
\endbibitem

\bibitem{frank04}
\begin{bbook}[author]
\bauthor{\bsnm{Frank},~\bfnm{T.~D.}\binits{T.~D.}}
(\byear{2005}).
\btitle{Nonlinear {F}okker-{P}lanck equations}.
\bseries{Springer Series in Synergetics}.
\bpublisher{Springer-Verlag}, \baddress{Berlin}.
\bmrnumber{MR2118870}
\end{bbook}
\endbibitem

\bibitem{Genon_laredo_2021}
\begin{barticle}[author]
\bauthor{\bsnm{Genon-Catalot},~\bfnm{V.}\binits{V.}} \AND
  \bauthor{\bsnm{Lar\'{e}do},~\bfnm{C.}\binits{C.}}
(\byear{2021}).
\btitle{Parametric inference for small variance and long time horizon
  {M}c{K}ean-{V}lasov diffusion models}.
\bjournal{Electron. J. Stat.}
\bvolume{15}
\bpages{5811--5854}.
\bdoi{10.1214/21-ejs1922}
\bmrnumber{4355698}
\end{barticle}
\endbibitem

\bibitem{GvdV17}
\begin{bbook}[author]
\bauthor{\bsnm{Ghosal},~\bfnm{Subhashis}\binits{S.}} \AND
  \bauthor{\bparticle{van~der} \bsnm{Vaart},~\bfnm{Aad}\binits{A.}}
(\byear{2017}).
\btitle{Fundamentals of nonparametric {B}ayesian inference}.
\bseries{Cambridge Series in Statistical and Probabilistic Mathematics}
\bvolume{44}.
\bpublisher{Cambridge University Press, Cambridge}.
\bdoi{10.1017/9781139029834}
\bmrnumber{3587782}
\end{bbook}
\endbibitem

\bibitem{giesecke2019}
\begin{barticle}[author]
\bauthor{\bsnm{Giesecke},~\bfnm{K.}\binits{K.}},
  \bauthor{\bsnm{Schwenkler},~\bfnm{G.}\binits{G.}} \AND
  \bauthor{\bsnm{Sirignano},~\bfnm{J.~A.}\binits{J.~A.}}
(\byear{2020}).
\btitle{Inference for large financial systems}.
\bjournal{Math. Finance}
\bvolume{30}
\bpages{3--46}.
\bdoi{10.1111/mafi.12222}
\bmrnumber{4067069}
\end{barticle}
\endbibitem

\bibitem{GN16}
\begin{bbook}[author]
\bauthor{\bsnm{Gin\'{e}},~\bfnm{E.}\binits{E.}} \AND
  \bauthor{\bsnm{Nickl},~\bfnm{R.}\binits{R.}}
(\byear{2016}).
\btitle{Mathematical foundations of infinite-dimensional statistical models}.
\bseries{Cambridge Series in Statistical and Probabilistic Mathematics, [40]}.
\bpublisher{Cambridge University Press, New York}.
\bdoi{10.1017/CBO9781107337862}
\bmrnumber{3588285}
\end{bbook}
\endbibitem

\bibitem{GiordanoRay2022}
\begin{barticle}[author]
\bauthor{\bsnm{Giordano},~\bfnm{M.}\binits{M.}} \AND
  \bauthor{\bsnm{Ray},~\bfnm{K.}\binits{K.}}
(\byear{2022}).
\btitle{Nonparametric {B}ayesian inference for reversible multidimensional
  diffusions}.
\bjournal{Ann. Statist.}
\bvolume{50}
\bpages{2872--2898}.
\bdoi{10.1214/22-aos2213}
\bmrnumber{4500628}
\end{barticle}
\endbibitem

\bibitem{Golse2016}
\begin{bincollection}[author]
\bauthor{\bsnm{Golse},~\bfnm{F.}\binits{F.}}
(\byear{2016}).
\btitle{On the dynamics of large particle systems in the mean field limit}.
In \bbooktitle{Macroscopic and large scale phenomena: coarse graining, mean
  field limits and ergodicity}.
\bseries{Lect. Notes Appl. Math. Mech.}
\bvolume{3}
\bpages{1--144}.
\bpublisher{Springer, [Cham]}.
\bdoi{10.1007/978-3-319-26883-5\_1}
\bmrnumber{3468297}
\end{bincollection}
\endbibitem

\bibitem{SGGPUV2019}
\begin{barticle}[author]
\bauthor{\bsnm{Gomes},~\bfnm{S.~N.}\binits{S.~N.}},
  \bauthor{\bsnm{Pavliotis},~\bfnm{G.~A.}\binits{G.~A.}} \AND
  \bauthor{\bsnm{Vaes},~\bfnm{U.}\binits{U.}}
(\byear{2020}).
\btitle{Mean {F}ield {L}imits for {I}nteracting {D}iffusions with {C}olored
  {N}oise: {P}hase {T}ransitions and {S}pectral {N}umerical {M}ethods}.
\bjournal{Multiscale Model. Simul.}
\bvolume{18}
\bpages{1343--1370}.
\bdoi{10.1137/19M1258116}
\bmrnumber{4143413}
\end{barticle}
\endbibitem

\bibitem{Goodwin2020}
\begin{binproceedings}[author]
\bauthor{\bsnm{Goodwin},~\bfnm{T.}\binits{T.}},
  \bauthor{\bsnm{Evenhuis},~\bfnm{C.}\binits{C.}},
  \bauthor{\bsnm{Woodcock},~\bfnm{S.}\binits{S.}} \AND
  \bauthor{\bsnm{Quiroz},~\bfnm{M.}\binits{M.}}
(\byear{2020}).
\btitle{Bayesian inference on the {K}eller--{S}egel model}.
In \bbooktitle{Proceedings of the 14th Biennial Engineering Mathematics and
  Applications Conference, EMAC-2019}.
\bseries{ANZIAM J.}
\bvolume{61}
\bpages{C181--C196}.
\end{binproceedings}
\endbibitem

\bibitem{guillin2023uniform}
\begin{barticle}[author]
\bauthor{\bsnm{Guillin},~\bfnm{A.}\binits{A.}},
  \bauthor{\bsnm{Bris},~\bfnm{P.~Le}\binits{P.~L.}} \AND
  \bauthor{\bsnm{Monmarch\'{e}},~\bfnm{P.}\binits{P.}}
(\byear{2024}).
\btitle{Uniform in time propagation of chaos for the 2{D} vortex model and
  other singular stochastic systems}.
\bjournal{J. Eur. Math. Soc. published online first}.
\end{barticle}
\endbibitem

\bibitem{Ruffo_al_2018}
\begin{bbook}[author]
\bauthor{\bsnm{Gupta},~\bfnm{S.}\binits{S.}},
  \bauthor{\bsnm{Campa},~\bfnm{A.}\binits{A.}} \AND
  \bauthor{\bsnm{Ruffo},~\bfnm{S.}\binits{S.}}
(\byear{2018}).
\btitle{Statistical physics of synchronization}.
\bseries{SpringerBriefs in Complexity}.
\bpublisher{Springer, Cham}.
\bdoi{10.1007/978-3-319-96664-9}
\bmrnumber{3837140}
\end{bbook}
\endbibitem

\bibitem{Gvalani_thesis}
\begin{bphdthesis}[author]
\bauthor{\bsnm{Gvalani},~\bfnm{R.~S.}\binits{R.~S.}}
(\byear{2020}).
\btitle{Many-particle systems: phase transitions, distinguished limits, and
  gradient flows},
\btype{PhD thesis},
\bpublisher{Imperial College London}.
\end{bphdthesis}
\endbibitem

\bibitem{hegselmannkrause}
\begin{barticle}[author]
\bauthor{\bsnm{Hegselmann},~\bfnm{R.}\binits{R.}} \AND
  \bauthor{\bsnm{Krause},~\bfnm{U.}\binits{U.}}
(\byear{2002}).
\btitle{Opinion dynamics and bounded confidence: models, analysis and
  simulation}.
\bjournal{J. Artif. Soc. Soc. Simul.}
\bvolume{5}.
\end{barticle}
\endbibitem

\bibitem{computation9110119}
\begin{barticle}[author]
\bauthor{\bsnm{Hellmuth},~\bfnm{K.}\binits{K.}},
  \bauthor{\bsnm{Klingenberg},~\bfnm{C.}\binits{C.}},
  \bauthor{\bsnm{Li},~\bfnm{Q.}\binits{Q.}} \AND
  \bauthor{\bsnm{Tang},~\bfnm{M.}\binits{M.}}
(\byear{2021}).
\btitle{Multiscale Convergence of the Inverse Problem for Chemotaxis in the
  Bayesian Setting}.
\bjournal{Computation}
\bvolume{9}.
\bdoi{10.3390/computation9110119}
\end{barticle}
\endbibitem

\bibitem{HR22}
\begin{barticle}[author]
\bauthor{\bsnm{Hoffmann},~\bfnm{M.}\binits{M.}} \AND
  \bauthor{\bsnm{Ray},~\bfnm{K.}\binits{K.}}
(\byear{2024}).
\btitle{Nonparametric {B}ayesian estimation in a multidimensional diffusion
  model with high frequency data}.
\bjournal{Probability Theory and Related Fields}.
\bnote{To appear}.
\end{barticle}
\endbibitem

\bibitem{Kasonga_1990}
\begin{barticle}[author]
\bauthor{\bsnm{Kasonga},~\bfnm{R.~A.}\binits{R.~A.}}
(\byear{1990}).
\btitle{Maximum likelihood theory for large interacting systems}.
\bjournal{SIAM J. Appl. Math.}
\bvolume{50}
\bpages{865--875}.
\bdoi{10.1137/0150050}
\bmrnumber{1050917}
\end{barticle}
\endbibitem

\bibitem{Laplace}
\begin{bbook}[author]
\bauthor{\bsnm{Kotz},~\bfnm{S.}\binits{S.}},
  \bauthor{\bsnm{Kozubowski},~\bfnm{T.~J.}\binits{T.~J.}} \AND
  \bauthor{\bsnm{Podg\'{o}rski},~\bfnm{K.}\binits{K.}}
(\byear{2001}).
\btitle{The {L}aplace distribution and generalizations}.
\bpublisher{Birkh\"{a}user Boston, Inc., Boston, MA}
\bnote{A revisit with applications to communications, economics, engineering,
  and finance}.
\bdoi{10.1007/978-1-4612-0173-1}
\bmrnumber{1935481}
\end{bbook}
\endbibitem

\bibitem{Lacker_LeFlem_2023}
\begin{barticle}[author]
\bauthor{\bsnm{Lacker},~\bfnm{D.}\binits{D.}} \AND
  \bauthor{\bsnm{Le~Flem},~\bfnm{L.}\binits{L.}}
(\byear{2023}).
\btitle{Sharp uniform-in-time propagation of chaos}.
\bjournal{Probab. Theory Related Fields}
\bvolume{187}
\bpages{443--480}.
\bdoi{10.1007/s00440-023-01192-x}
\bmrnumber{4634344}
\end{barticle}
\endbibitem

\bibitem{lee2023learning}
\begin{barticle}[author]
\bauthor{\bsnm{Lee},~\bfnm{S.}\binits{S.}},
  \bauthor{\bsnm{Psarellis},~\bfnm{Y.~M.}\binits{Y.~M.}},
  \bauthor{\bsnm{Siettos},~\bfnm{C.~I.}\binits{C.~I.}} \AND
  \bauthor{\bsnm{Kevrekidis},~\bfnm{I.~G}\binits{I.~G.}}
(\byear{2023}).
\btitle{Learning black-and gray-box chemotactic {P}{D}{E}s/closures from agent
  based {M}onte {C}arlo simulation data}.
\bjournal{Journal of Mathematical Biology}
\bvolume{87}
\bpages{15}.
\end{barticle}
\endbibitem

\bibitem{Lions_vol2}
\begin{bbook}[author]
\bauthor{\bsnm{Lions},~\bfnm{J.~L.}\binits{J.~L.}} \AND
  \bauthor{\bsnm{Magenes},~\bfnm{E.}\binits{E.}}
(\byear{1972}).
\btitle{Non-homogeneous boundary value problems and applications. {V}ol. {II}}.
\bseries{Die Grundlehren der mathematischen Wissenschaften, Band 182}.
\bpublisher{Springer-Verlag, New York-Heidelberg}
\bnote{Translated from the French by P. Kenneth}.
\bmrnumber{350178}
\end{bbook}
\endbibitem

\bibitem{Malrieu2001}
\begin{barticle}[author]
\bauthor{\bsnm{Malrieu},~\bfnm{F.}\binits{F.}}
(\byear{2001}).
\btitle{Logarithmic {S}obolev inequalities for some nonlinear {PDE}'s}.
\bjournal{Stochastic Process. Appl.}
\bvolume{95}
\bpages{109--132}.
\bdoi{10.1016/S0304-4149(01)00095-3}
\bmrnumber{1847094}
\end{barticle}
\endbibitem

\bibitem{Milani_1999}
\begin{barticle}[author]
\bauthor{\bsnm{Milani},~\bfnm{A.}\binits{A.}}
(\byear{1999}).
\btitle{A remark on the {S}obolev regularity of classical solutions to
  uniformly parabolic equations}.
\bjournal{Math. Nachr.}
\bvolume{199}
\bpages{115--144}.
\bdoi{10.1002/mana.19991990107}
\bmrnumber{1676322}
\end{barticle}
\endbibitem

\bibitem{MM13}
\begin{barticle}[author]
\bauthor{\bsnm{Mischler},~\bfnm{S.}\binits{S.}} \AND
  \bauthor{\bsnm{Mouhot},~\bfnm{C.}\binits{C.}}
(\byear{2013}).
\btitle{Kac's program in kinetic theory}.
\bjournal{Invent. Math.}
\bvolume{193}
\bpages{1--147}.
\bdoi{10.1007/s00222-012-0422-3}
\bmrnumber{3069113}
\end{barticle}
\endbibitem

\bibitem{MNP21}
\begin{barticle}[author]
\bauthor{\bsnm{Monard},~\bfnm{F.}\binits{F.}},
  \bauthor{\bsnm{Nickl},~\bfnm{R.}\binits{R.}} \AND
  \bauthor{\bsnm{Paternain},~\bfnm{G.~P.}\binits{G.~P.}}
(\byear{2021}).
\btitle{Consistent inversion of noisy non-{A}belian {X}-ray transforms}.
\bjournal{Comm. Pure Appl. Math.}
\bvolume{74}
\bpages{1045--1099}.
\bdoi{10.1002/cpa.21942}
\bmrnumber{4230066}
\end{barticle}
\endbibitem

\bibitem{N20}
\begin{barticle}[author]
\bauthor{\bsnm{Nickl},~\bfnm{R.}\binits{R.}}
(\byear{2020}).
\btitle{Bernstein--von {M}ises theorems for statistical inverse problems {I}:
  {S}chr\"{o}dinger equation}.
\bjournal{J. Eur. Math. Soc. (JEMS)}
\bvolume{22}
\bpages{2697--2750}.
\bdoi{10.4171/JEMS/975}
\bmrnumber{4118619}
\end{barticle}
\endbibitem

\bibitem{Nickl_2023}
\begin{bbook}[author]
\bauthor{\bsnm{Nickl},~\bfnm{R.}\binits{R.}}
(\byear{2023}).
\btitle{Bayesian non-linear statistical inverse problems}.
\bpublisher{EMS Press, Berlin}.
\bdoi{10.4171/zlam/30}
\bmrnumber{4604099}
\end{bbook}
\endbibitem

\bibitem{N24}
\begin{barticle}[author]
\bauthor{\bsnm{Nickl},~\bfnm{R.}\binits{R.}}
(\byear{2024}).
\btitle{Consistent inference for diffusions from low frequency measurements}.
\bjournal{Ann. Statist.}
\bvolume{52}
\bpages{519--549}.
\bdoi{10.1214/24-aos2357}
\bmrnumber{4744186}
\end{barticle}
\endbibitem

\bibitem{Nickl_Ray_2020}
\begin{barticle}[author]
\bauthor{\bsnm{Nickl},~\bfnm{R.}\binits{R.}} \AND
  \bauthor{\bsnm{Ray},~\bfnm{K.}\binits{K.}}
(\byear{2020}).
\btitle{Nonparametric statistical inference for drift vector fields of
  multi-dimensional diffusions}.
\bjournal{Ann. Statist.}
\bvolume{48}
\bpages{1383--1408}.
\bdoi{10.1214/19-AOS1851}
\bmrnumber{4124327}
\end{barticle}
\endbibitem

\bibitem{NT23}
\begin{barticle}[author]
\bauthor{\bsnm{Nickl},~\bfnm{Richard}\binits{R.}} \AND
  \bauthor{\bsnm{Titi},~\bfnm{Edriss~S.}\binits{E.~S.}}
(\byear{2024}).
\btitle{On posterior consistency of data assimilation with {G}aussian process
  priors: {T}he 2{D}-{N}avier--{S}tokes equations}.
\bjournal{Ann. Statist.}
\bvolume{52}
\bpages{1825--1844}.
\bdoi{10.1214/24-aos2427}
\bmrnumber{4804829}
\end{barticle}
\endbibitem

\bibitem{NW24}
\begin{barticle}[author]
\bauthor{\bsnm{Nickl},~\bfnm{R.}\binits{R.}} \AND
  \bauthor{\bsnm{Wang},~\bfnm{S.}\binits{S.}}
(\byear{2024}).
\btitle{On polynomial-time computation of high-dimensional posterior measures
  by {L}angevin-type algorithms}.
\bjournal{J. Eur. Math. Soc. (JEMS)}
\bvolume{26}
\bpages{1031--1112}.
\bdoi{10.4171/jems/1304}
\bmrnumber{4721029}
\end{barticle}
\endbibitem

\bibitem{oelschlager1984}
\begin{barticle}[author]
\bauthor{\bsnm{Oelschl\"ager},~\bfnm{K.}\binits{K.}}
(\byear{1984}).
\btitle{A martingale approach to the law of large numbers for weakly
  interacting stochastic processes}.
\bjournal{Ann. Probab.}
\bvolume{12}
\bpages{458--479}.
\bmrnumber{735849}
\end{barticle}
\endbibitem

\bibitem{painter_al_2024}
\begin{barticle}[author]
\bauthor{\bsnm{Painter},~\bfnm{K.~J.}\binits{K.~J.}},
  \bauthor{\bsnm{Hillen},~\bfnm{T.}\binits{T.}} \AND
  \bauthor{\bsnm{Potts},~\bfnm{J.~R.}\binits{J.~R.}}
(\byear{2024}).
\btitle{Biological modeling with nonlocal advection-diffusion equations}.
\bjournal{Math. Models Methods Appl. Sci.}
\bvolume{34}
\bpages{57--107}.
\bdoi{10.1142/S0218202524400025}
\bmrnumber{4683281}
\end{barticle}
\endbibitem

\bibitem{toscani2014}
\begin{bbook}[author]
\bauthor{\bsnm{Pareschi},~\bfnm{L.}\binits{L.}} \AND
  \bauthor{\bsnm{Toscani},~\bfnm{G.}\binits{G.}}
(\byear{2013}).
\btitle{Interacting multiagent systems: kinetic equations and Monte Carlo
  methods}.
\bpublisher{OUP Oxford}.
\end{bbook}
\endbibitem

\bibitem{PSV_2022}
\begin{barticle}[author]
\bauthor{\bsnm{Pavliotis},~\bfnm{G.~A.}\binits{G.~A.}},
  \bauthor{\bsnm{Stuart},~\bfnm{A.~M.}\binits{A.~M.}} \AND
  \bauthor{\bsnm{Vaes},~\bfnm{U.}\binits{U.}}
(\byear{2022}).
\btitle{Derivative-free {B}ayesian inversion using multiscale dynamics}.
\bjournal{SIAM J. Appl. Dyn. Syst.}
\bvolume{21}
\bpages{284--326}.
\bdoi{10.1137/21M1397416}
\bmrnumber{4368987}
\end{barticle}
\endbibitem

\bibitem{PavliotisZanoni2022}
\begin{barticle}[author]
\bauthor{\bsnm{Pavliotis},~\bfnm{G.~A.}\binits{G.~A.}} \AND
  \bauthor{\bsnm{Zanoni},~\bfnm{A.}\binits{A.}}
(\byear{2022}).
\btitle{Eigenfunction {M}artingale {E}stimators for {I}nteracting {P}article
  {S}ystems and {T}heir {M}ean {F}ield {L}imit}.
\bjournal{SIAM J. Appl. Dyn. Syst.}
\bvolume{21}
\bpages{2338--2370}.
\bdoi{10.1137/21M1464348}
\bmrnumber{4513316}
\end{barticle}
\endbibitem

\bibitem{pavliotis2022method}
\begin{barticle}[author]
\bauthor{\bsnm{Pavliotis},~\bfnm{G.~A.}\binits{G.~A.}} \AND
  \bauthor{\bsnm{Zanoni},~\bfnm{A.}\binits{A.}}
(\byear{2024}).
\btitle{A {M}ethod of {M}oments {E}stimator for {I}nteracting {P}article
  {S}ystems and their {M}ean {F}ield {L}imit}.
\bjournal{SIAM/ASA J. Uncertain. Quantif.}
\bvolume{12}
\bpages{262--288}.
\bdoi{10.1137/22M153848X}
\bmrnumber{4727693}
\end{barticle}
\endbibitem

\bibitem{Totzeck_al_2017}
\begin{barticle}[author]
\bauthor{\bsnm{Pinnau},~\bfnm{R.}\binits{R.}},
  \bauthor{\bsnm{Totzeck},~\bfnm{C.}\binits{C.}},
  \bauthor{\bsnm{Tse},~\bfnm{O.}\binits{O.}} \AND
  \bauthor{\bsnm{Martin},~\bfnm{S.}\binits{S.}}
(\byear{2017}).
\btitle{A consensus-based model for global optimization and its mean-field
  limit}.
\bjournal{Math. Models Methods Appl. Sci.}
\bvolume{27}
\bpages{183--204}.
\bdoi{10.1142/S0218202517400061}
\bmrnumber{3597012}
\end{barticle}
\endbibitem

\bibitem{rasmussen2023bayesian}
\begin{barticle}[author]
\bauthor{\bsnm{Rasmussen},~\bfnm{Aksel~K.}\binits{A.~K.}},
  \bauthor{\bsnm{Seizilles},~\bfnm{Fanny}\binits{F.}},
  \bauthor{\bsnm{Girolami},~\bfnm{Mark}\binits{M.}} \AND
  \bauthor{\bsnm{Kazlauskaite},~\bfnm{Ieva}\binits{I.}}
(\byear{2024}).
\btitle{The {B}ayesian Approach to Inverse {R}obin Problems}.
\bjournal{SIAM/ASA Journal on Uncertainty Quantification}
\bvolume{12}
\bpages{1050-1084}.
\bdoi{10.1137/23M1620624}
\end{barticle}
\endbibitem

\bibitem{RW06}
\begin{bbook}[author]
\bauthor{\bsnm{Rasmussen},~\bfnm{C.~E.}\binits{C.~E.}} \AND
  \bauthor{\bsnm{Williams},~\bfnm{C.~K.~I.}\binits{C.~K.~I.}}
(\byear{2006}).
\btitle{{G}aussian processes for machine learning}.
\bseries{Adaptive Computation and Machine Learning}.
\bpublisher{MIT Press, Cambridge, MA}.
\bmrnumber{2514435}
\end{bbook}
\endbibitem

\bibitem{rehmeier2024nonlinearmarkovprocessessense}
\begin{barticle}[author]
\bauthor{\bsnm{Rehmeier},~\bfnm{M.}\binits{M.}} \AND
  \bauthor{\bsnm{R{\"o}ckner},~\bfnm{M.}\binits{M.}}
(\byear{2022}).
\btitle{On nonlinear {M}arkov processes in the sense of {McKean}}.
\bjournal{arXiv preprint arXiv:2212.12424}.
\end{barticle}
\endbibitem

\bibitem{R08}
\begin{barticle}[author]
\bauthor{\bsnm{Rei\ss},~\bfnm{M.}\binits{M.}}
(\byear{2008}).
\btitle{Asymptotic equivalence for nonparametric regression with multivariate
  and random design}.
\bjournal{Ann. Statist.}
\bvolume{36}
\bpages{1957--1982}.
\bdoi{10.1214/07-AOS525}
\bmrnumber{2435461}
\end{barticle}
\endbibitem

\bibitem{SS22}
\begin{barticle}[author]
\bauthor{\bsnm{Schlichting},~\bfnm{A.}\binits{A.}} \AND
  \bauthor{\bsnm{Seis},~\bfnm{C.}\binits{C.}}
(\byear{2022}).
\btitle{The {S}charfetter-{G}ummel scheme for aggregation-diffusion equations}.
\bjournal{IMA J. Numer. Anal.}
\bvolume{42}
\bpages{2361--2402}.
\bdoi{10.1093/imanum/drab039}
\bmrnumber{4454925}
\end{barticle}
\endbibitem

\bibitem{sharrock2023onlin}
\begin{barticle}[author]
\bauthor{\bsnm{Sharrock},~\bfnm{L.}\binits{L.}},
  \bauthor{\bsnm{Kantas},~\bfnm{N.}\binits{N.}},
  \bauthor{\bsnm{Parpas},~\bfnm{P.}\binits{P.}} \AND
  \bauthor{\bsnm{Pavliotis},~\bfnm{G.~A.}\binits{G.~A.}}
(\byear{2023}).
\btitle{Online parameter estimation for the {M}c{K}ean-{V}lasov {S}{D}{E}}.
\bjournal{Stoch. Proc. Appl.}
\bvolume{162}
\bpages{481--546}.
\bdoi{10.1016/j.spa.2023.05.002}
\bmrnumber{4597535}
\end{barticle}
\endbibitem

\bibitem{sprungk2023metropolisadjusted}
\begin{bmisc}[author]
\bauthor{\bsnm{Sprungk},~\bfnm{B.}\binits{B.}},
  \bauthor{\bsnm{Weissmann},~\bfnm{S.}\binits{S.}} \AND
  \bauthor{\bsnm{Zech},~\bfnm{J.}\binits{J.}}
(\byear{2023}).
\btitle{Metropolis-adjusted interacting particle sampling}.
\end{bmisc}
\endbibitem

\bibitem{Stuart2010}
\begin{barticle}[author]
\bauthor{\bsnm{Stuart},~\bfnm{A.~M.}\binits{A.~M.}}
(\byear{2010}).
\btitle{Inverse problems: a {B}ayesian perspective}.
\bjournal{Acta Numer.}
\bvolume{19}
\bpages{451--559}.
\bdoi{10.1017/S0962492910000061}
\bmrnumber{2652785}
\end{barticle}
\endbibitem

\bibitem{sznitman1991}
\begin{bincollection}[author]
\bauthor{\bsnm{Sznitman},~\bfnm{A.~S.}\binits{A.~S.}}
(\byear{1991}).
\btitle{Topics in propagation of chaos}.
In \bbooktitle{\'{E}cole d'\'{E}t\'{e} de {P}robabilit\'{e}s de {S}aint-{F}lour
  {XIX}---1989}.
\bseries{Lecture Notes in Math.}
\bvolume{1464}
\bpages{165--251}.
\bpublisher{Springer, Berlin}.
\bdoi{10.1007/BFb0085169}
\bmrnumber{1108185}
\end{bincollection}
\endbibitem

\bibitem{Wloka_1987}
\begin{bbook}[author]
\bauthor{\bsnm{Wloka},~\bfnm{J.}\binits{J.}}
(\byear{1987}).
\btitle{Partial differential equations}.
\bpublisher{Cambridge University Press, Cambridge}
\bnote{Translated from the German by C. B. Thomas and M. J. Thomas}.
\bdoi{10.1017/CBO9781139171755}
\bmrnumber{895589}
\end{bbook}
\endbibitem

\end{thebibliography}


\end{document}